\numberwithin{equation}{subsection} 
\newtheorem{thm}{Theorem}[section]
\newtheorem{prop}[thm]{Proposition}
\newtheorem{pdef}[thm]{Proposition-Definition}
\newtheorem{lem}[thm]{Lemma}
\newtheorem*{thmsans}{Theorem}
\newtheorem*{princ}{Principium}
\newtheorem{cor}[thm]{Corollary}
\newtheorem{conj}[thm]{Conjecture}
\newtheorem*{lemsans}{Lemma}
\newtheoremstyle{bidule}
{3pt}
{3pt}
{}
{}
{\scshape}
{.}
{.5em}
{}
\newtheorem{df}[thm]{Definition}
\theoremstyle{definition}
\newtheorem{rmk}[thm]{Remark}
\newtheorem{schol}[thm]{Scholium}
\newtheorem*{defsans}{Definition} 
\newtheorem*{term}{Terminology}
\newtheorem*{note}{Note}
\newtheorem*{warn}{Warning}
\newtheorem*{claim}{Claim}
\newtheorem{nota}[thm]{Notation}
\newtheorem*{conv}{Convention}
\newcommand{\Ev}{\tx{Ev}}
\newcommand{\C}{\mathcal{C}}
\newcommand{\Ub}{\mathcal{U}}
\newcommand{\F}{\mathcal{F}}
\newcommand{\Ar}{\text{Arr}}
\newcommand{\D}{\mathcal{D}}
\newcommand{\Ba}{\mathcal{B}}
\newcommand{\Aa}{\mathcal{A}}
\newcommand{\B}{\mathscr{B}}
\newcommand{\M}{\mathscr{M}}
\newcommand{\V}{\mathbb{V}}
\newcommand{\Ja}{\mathbf{J}} 
\newcommand{\J}{\mathcal{J}} 
\newcommand{\T}{\mathcal{T}}
\newcommand{\Ea}{\mathcal{E}} 
\newcommand{\W}{\mathscr{W}}
\newcommand{\I}{\mathbf{I}}
\newcommand{\Un}{\mathbb{I}} 
\newcommand{\G}{\mathcal{G}}
\renewcommand{\H}{\mathcal{H}}
\renewcommand{\S}{\mathcal{S}}
\newcommand{\Fb}{\mathbf{F}}
\newcommand{\Sim}{\mathscr{S}}
\newcommand{\Pa}{\mathcal{P}}
\renewcommand{\to}{\longrightarrow}
\newcommand{\ol}{\overline}
\newcommand{\ul}{\underline}
\renewcommand{\bf}{\mathbf}
\newcommand{\U}{\mathbb{U}}
\newcommand{\n}{\textbf{n}} 
\newcommand{\m}{\textbf{m}} 
\newcommand{\p}{\textbf{p}} 
\newcommand{\q}{\textbf{q}} 
\renewcommand{\k}{\textbf{k}} 
\newcommand{\0}{\textbf{0}} 
\renewcommand{\1}{\textbf{1}} 
\renewcommand{\2}{\textbf{2}} 
\newcommand{\uc}{\mathds{1}} 
\newcommand{\tx}{\text}
\newcommand{\tld}{\widetilde}
\renewcommand{\to}{\longrightarrow}
\DeclareMathOperator\Id{Id}
\DeclareMathOperator\Hom{Hom}
\DeclareMathOperator\Lax{Lax}
\DeclareMathOperator\Cat{\mathbf{Cat}}
\DeclareMathOperator\msx{\M_{\S}(\tx{$X$})}
\DeclareMathOperator\msxsu{\M_{\S}(\tx{$X$})_{\tx{$u$}}}
\DeclareMathOperator\msxeasy{\msx_{\tx{$easy$}}}
\DeclareMathOperator\kb{\mathbf{K}} 
\DeclareMathOperator\Lb{\tx{L}} 
\DeclareMathOperator\sx{\S_{\ol{X}}} 
\DeclareMathOperator\Ho{\mathbf{ho}} 
\DeclareMathOperator\Depipl{\Delta_{\tx{epi}}^+}
\DeclareMathOperator\laxlatch{\mathbf{Latch}_{lax}}  
\DeclareMathOperator\av{\alpha_{\downarrow_{\Id_V}}}
\DeclareMathOperator\Phepi{\Phi_{\tx{epi}}} 
\DeclareMathOperator\Phepipl{\Phi_{\tx{epi}+}} 
\DeclareMathOperator\Phepiop{\Phi_{\tx{epi}}^{\tx{op}}}
\DeclareMathOperator\Phepiopl{\Phi_{\tx{epi}+}^{\tx{op}}}
\DeclareMathOperator\ap{\tx{-}} 
\DeclareMathOperator\bp{\tx{$+$}} 
\DeclareMathOperator\fset{\textbf{FinSet}} 
\DeclareMathOperator\coms{\tx{Com}_{\S}(\M)} 
\DeclareMathOperator\comsec{\tx{Com}_{\S}(\M)_{\tx{$e$}}^{\mathbf{c}}}
\DeclareMathOperator\comse{\tx{Com}_{\S}(\M)_{\tx{$e$}}}  
\DeclareMathOperator\comsep{\tx{Com}_{\S}(\M)_{\tx{$e+$}}}
\DeclareMathOperator\comsepc{\tx{Com}_{\S}(\M)_{\tx{$e+$}}^{\mathbf{c}}}  
\DeclareMathOperator\com{\tx{Com}(\M)} 
\DeclareMathOperator\ox{\otimes}
\DeclareMathOperator\Add{\tx{$Add$}}
\title{Model structure on commutative\\
co-Segal dg-algebras
 in characteristic $p>0$} 
\author{Hugo V. Bacard 
}
 \affil{Western University}
\date{\today}
\begin{document}
\maketitle
\begin{abstract}
We study weak commutative algebras in a symmetric monoidal model category $\M$. We provide a model structure on these algebras for any symmetric monoidal model category that is combinatorial and left proper.  Our motivation was to have a homotopy theory  of  weak  commutative dg-algebras in characteristic $p>0$, since there is no such theory for strict commutative dg-algebras. For a general $\M$, we show that if the projective model structure on strict commutative algebras exists, then the inclusion from strict to weak algebras is a Quillen equivalence. The results of this paper can be generalized to symmetric co-Segal $\Pa$-algebras for any operad $\Pa$. And surprisingly, the axioms of a monoidal model category are not necessary to get the model structure on co-Segal commutative algebras.
\end{abstract}
\setcounter{tocdepth}{1}
\tableofcontents 
\newpage
\section{Introduction}
Let $\M=(\ul{M}, \otimes, I)$ be a symmetric monoidal model category. In this paper we consider a notion of weak commutative monoid (algebra) in $\M$ that we will call \emph{commutative co-Segal algebra}. Our main motivation is when $\M$ is the category $C(\k)$  of unbounded chain complexes over a commutative ring $\k$, regardless of the characteristic of $\k$.\\

Recall that  $C(\k)$  has the structure of a monoidal model category in which the fibrations are the degree-wise surjective maps and the weak equivalences are the quasi-isomorphisms. The reader will find a good exposition of the model structure on $C(\k)$ in Hovey's book \cite{Hov-model}. If $\k$ is a field and if  $\Pa$ is a (symmetric) operad enriched over $C(\k)$, Hinich \cite{Hinich_haha} showed that there is a model structure on $\Pa$-algebras if $\Pa$ is \emph{ $\Sigma$-split}, where $\Sigma$ is the (generic) symmetric group. And he shows that if $\k$ is a field of characteristic $0$ then any $\Pa$ is automatically $\Sigma$-split.\\

The homotopy theory of commutative monoids in a general symmetric monoidal model category $\M$ was extensively studied by White \cite{White_com}. White gave sufficient conditions on $\M$ under which the \emph{natural} model structure on commutative monoids in $\M$ exists (\cite[Theorem 3.2]{White_com}).\\

Basically the major problem is the action of the symmetric group and its interaction with the tensor product $\otimes$. The issue is that taking the quotient object is not a functor that usually preserves weak equivalences of underlying objects. In general, it doesn't even preserve trivial cofibrations. These quotients appear in the free-algebras functor, along which we usually transfer the homotopy theory from $\M$ to the category of commutative algebras.\\

The guiding principle of this paper is the following fact, that we will take as a `motto'.
\begin{princ} The co-Segal formalism allows us to put the action of the symmetric groups on the objects of the homotopy category $\Ho(\M)$, and not on the objects of $\M$ themselves.
For example if $\Pa$ is a $C(\k)$-operad, a co-Segal $\Pa$-algebra structure on $A \in \M$ will be given by a zigzag in $\M$:
$$\Pa(l) \otimes_{\Sigma_l} A^{\otimes l} \to A(l) \xleftarrow{\sim} A.$$

\begin{itemize}[label=$-$]
\item By doing this we avoid a direct interaction in $\M$ between the symmetric group and the object $A$ that will carry the weak algebra structure.
\item The definition of a co-Segal $\Pa$-algebra, does not require $A$  to interact with the product $\otimes$, in the sense that, a priori, there is no map  $\{\cdots \} \to A $  in $\M$, such that the source $\{\cdots\}$ is  a nontrivial expression involving $\otimes$.
\item Consequently we don't use the axioms of a monoidal model category $\M$, to have a Quillen model structure for co-Segal $\Pa$-algebras.
\item We only need a symmetric monoidal closed category $\M=(\ul{M}, \otimes,I)$ whose underlying category $\ul{M}$ is a model category that is combinatorial, and most importantly left proper.
\end{itemize}
\end{princ}
We explain briefly in the next part of this introduction how this works. Although everything works for any operad $\Pa$, we only treat in this paper the case corresponding to $\Pa=Com$, the operad of commutative monoids. This is only to avoid a very long paper.

 We also do this implicitly without really using the language of operads. We use instead symmetric monoidal lax functor. The general case will be done properly later. But for now, we refer the reader to the author's preprint \cite{Bacard_lpcosegx} about how we generalize this to co-Segal $\Pa$-algebras for an operad $\Pa$.

\subsection{How the theory works}
Let $\fset$ be the category of finite sets, and let $\Phi$ be the equivalent model for $\fset$ formed by the sets $\n=\{0,...,n-1 \}$ and all functions between them. Let $\Phepipl \subset \Phi$ be the subcategory of epimorphisms in which we've excluded the empty set; and finally consider the opposite category $\Phepiopl$. One can already observe that Segal's $\Gamma$-category is implicitly involved: this is the underlying idea.\\ 

Now if $A$ is an object of $\M$, we usually define an algebra structure on $A$ by specifying a multiplication $\mu:A \ox A \to A $ and a unit $e: I \to A$. And we demand the usual axiom of associativity and commutativity for $\mu$ and the unitality axiom for $e$ and $\mu$.
 
If we think in the co-Segal world, we will regard this as a co-Segal algebra structure on a object $A$ that is  \emph{static} i.e, it doesn't ``change with time''. 

More precisely, having a co-Segal algebra structure on $A$ is the possibility to have a multiplication $A \ox A \to A(2)$ that does \ul{not} necessarily land to $A$ itself but to another object $A(2)$ that possesses the same homotopy information as $A$. \\

The object $A$ is the initial entry (``initial state'') of a diagram of weak equivalences, defined over $\Phepiopl$:  
 $$A=A(1) \xrightarrow{\sim} A(2) \rightrightarrows \cdots A(n) \cdots.$$

The category $\Phepiopl$ is a direct category, and being direct  is a concept that reflects a one-way \emph{evolution}. Therefore we can think of this diagram as an evolution of $A$ without the possibility to go back in time (...can we ?). And since this is a diagram of weak equivalences, these changes preserve the entire homotopy information of the initial entry \footnote{Like in real life where a growing subject won't change its DNA (fortunately...)}  $A$. In mathematical terms this diagram is a Reedy resolution of $A$ (see \cite{Hirsch-model-loc}). And the idea of a co-Segal structure is to have an algebra structure `as you go' on the resolution of $A$ rather than on $A$ itself.\\

It turns out that having a pseudo-multiplication  $A \ox A \to A(2)$ plus a direct diagram of weak equivalences (co-Segal conditions) give more flexibility for homotopy theory purposes. Indeed all we have to do is to \emph{concentrate} first the homotopy theory  at the initial entries, and then use left Bousfield localizations. We will be specific below about what  `concentrate the homotopy theory' means.\\

The model structure on co-Segal commutative monoids is close to be a  `\emph{Dugger replacement}' (see \cite{Dugger_rep}), for the model structure on usual strict commutative monoids (when it exists). The comparison between strict commutative monoids and co-Segal commutative monoids  can be seen as an algebraic version of the inclusion  $\M \hookrightarrow \Hom(\Phepiopl, \M)$ that identifies $\M$ with the full subcategory of constant diagrams.\\

The objects of $\Hom(\Phepiopl, \M)$ that correspond to diagrams of weak equivalences are by definition constant in the homotopy category $\Ho(\M)$ (the co-Segal diagrams). And it can be shown either with classical methods or using the techniques of this paper, that if $\M$ is combinatorial and left proper then this inclusion induces an equivalence of homotopy categories between $\M$ and the subcategory of diagrams that satisfies the co-Segal conditions (diagrams of weak equivalences).\\

To close this introduction, we list hereafter the key points that lead to the homotopy theory of co-Segal structures. It's the same process for both symmetric and nonsymmetric structures. 
\begin{enumerate}
\item First we define a co-Segal commutative monoid as a symmetric monoidal lax diagram $$\F: (\Phepiop,+,\0) \to \M,$$ 
that satisfies the \emph{co-Segal conditions}. If the co-Segal conditions are dropped we will say that $\F$ is a co-Segal commutative \emph{premonoid}.
\item There is a category $\coms$ of all commutative premonoids, that is locally presentable if $\M$ is. In particular $\coms$ is complete and cocomplete.
\item We have an adjunction given by the evaluation at $\1$:
$$\Ev_\1: \coms \leftrightarrows \M: Free.$$
\item We get a right-induced `easy' model structure on $\coms$  because:
\begin{itemize}[label=$-$]
\item The right adjoint $\Ev_\1$ happens to preserve pushouts
\item And more importantly, the composite $\Ev_\1 \circ Free: \M \to \M$ preserves cofibrations and trivial cofibrations. This is one of the major differences between usual commutative monoids and the co-Segal ones.
\item In this model structure a map $\sigma: \F \to \G$ is a weak equivalence (resp. a fibration) if the component $\sigma_\1: \F(\1)\to \G(\1)$ is a weak equivalence (resp. a fibration). This is what we mean by ``concentrate the homotopy theory'' at the initial entry $\1$.
\end{itemize}
\item This easy model category $\comse$ is left proper if $\M$ is. And thanks to Smith's theorem for left proper combinatorial model category, we can form left Bousfield localizations (later).
\item If $\com$ is the category of usual strict commutative monoids, then there is an inclusion $\iota: \com \hookrightarrow \coms$ that exhibits $\com$ as a full reflective subcategory. In particular there is a left adjoint $|-|: \coms \to \com$.
\item If the natural model structure on $\com$ exists as in \cite{White_com}, then it agrees with the easy model structure $\comse$. Putting this differently, we have a Quillen adjunction in which $\iota: \com \hookrightarrow \comse$ is a right Quillen functor.
\item If $\I$ is a set of generating cofibrations for $\M$ then there exists a set of maps $\kb(\I)$ in $\coms$ such that:
\begin{itemize}[label=$-$]
\item Every $\kb(\I)$-injective commutative premonoid $\F$ satisfies the co-Segal conditions.
\item The left Quillen functor $|-|: \coms \to \com$ sends elements of $\kb(\I)$ to isomorphisms (therefore to weak equivalences).
\end{itemize}
\item After this we build another model structure `$+$' on $\coms$ with the same weak equivalences but such that every element of $\kb(\I)$ becomes a generating cofibration. 
\item The identity functor gives a left Quillen functor $\comse \to \comsep$ that is a Quillen equivalence.
\item There is a bit technical part which says for any $\F \in \coms$ the unit map $\F \to \iota(|\F|)$ is a $\kb(\I)$-local equivalence in $\comse$ (whence in $\comsep$).
\item Finally we introduce the left Bousfield localizations of $\comse$ and $\comsep$ with respect to the same set $\kb(\I)$.
\end{enumerate}
After these steps we get our main result as follows (Theorem \ref{main-thm-quillen-equiv}).
\begin{thmsans}\label{main-thm-quillen-equiv-intro}
Let $\M$ be a symmetric monoidal model category that is combinatorial and left proper. Then the following hold. 
\begin{enumerate}
\item The left Quillen equivalence $\comse \to \comsep$ induced by the identity of $\coms$ descends to a left Quillen equivalence between the respective left Bousfield localizations with respect to $\kb(\I)$:
$$\comsec \to \comsepc.$$
\item In the model category $\comsepc$ any fibrant object is a co-Segal commutative monoid.
\item If the transferred model structure on $\com$ exists, then the adjunction
$$ |-|^{\mathbf{c}}: \comsec \leftrightarrows \com: \iota,$$
is a Quillen equivalence. 
\item The diagram $\comsepc \xleftarrow{}\comsec \xrightarrow{|-|^{\mathbf{c}}} \com$ is a zigzag of Quillen equivalences. In particular we have a diagram of equivalences between the homotopy categories.
$$\Ho[\comsepc] \xleftarrow{\simeq} \Ho[\comsec] \xrightarrow{\simeq} \Ho[\com].$$
\end{enumerate}
\end{thmsans}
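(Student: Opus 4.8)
The plan is to derive each of the four clauses from the structural facts~(1)--(13) assembled in the introduction, so that the argument is essentially an application of general localization machinery to inputs that are already in place. For clause~(1), I would invoke the standard descent theorem for left Bousfield localizations along a Quillen pair (Hirschhorn, \emph{Model Categories and Their Localizations}, Thm.~3.3.20): if $F:\mathcal{C}\leftrightarrows\mathcal{D}:G$ is a Quillen equivalence and $S$ a set of maps such that $L_S\mathcal{C}$ and $L_{\mathbf{L}F(S)}\mathcal{D}$ both exist, then $F:L_S\mathcal{C}\leftrightarrows L_{\mathbf{L}F(S)}\mathcal{D}:G$ is again a Quillen equivalence. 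Here $F=\Id$ is the left Quillen equivalence $\comse\to\comsep$ of point~(11) and the localizing set is $\kb(\I)$ on both sides. Because $F$ is the identity of the common underlying category $\coms$, and because the elements of $\kb(\I)$ are already (generating) cofibrations between cofibrant objects in $\comsep$ by the construction of the $+$-structure (point~(10)), one has $\mathbf{L}F(\kb(\I))=\kb(\I)$ up to cofibrant replacements that do not alter the local class, so $L_{\mathbf{L}F(\kb(\I))}\comsep=\comsepc$. Both localizations exist by Smith's theorem, since $\comse$ and $\comsep$ are left proper and combinatorial (point~(5)). The descent theorem then yields the left Quillen equivalence $\comsec\to\comsepc$.

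For clause~(2), a fibrant object of $\comsepc=L_{\kb(\I)}\comsep$ is precisely a $\comsep$-fibrant object $\F$ that is $\kb(\I)$-\emph{local}: for each $(A\to B)\in\kb(\I)$ the restriction $\Map(B,\F)\to\Map(A,\F)$ is a weak equivalence. I would argue that such an $\F$ is homotopically $\kb(\I)$-injective, since the maps in $\kb(\I)$ are cofibrations between cofibrant objects, so locality upgrades to an essentially unique homotopy lift against $\kb(\I)$. Point~(8) tells us that genuine $\kb(\I)$-injectivity forces the co-Segal conditions, i.e.\ makes the Reedy structure maps $\F(\1)\to\F(\n)$ weak equivalences in $\M$. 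As the co-Segal conditions only assert that certain components are weak equivalences and are therefore invariant under the easy weak equivalences, and as a fibrant-local object is easy-weakly-equivalent to a $\kb(\I)$-injective one, I conclude that every fibrant object of $\comsepc$ is a co-Segal commutative monoid. The delicate point is exactly this passage from homotopical locality to the verbatim co-Segal conditions, and I expect it to be the main obstacle of the whole proof.

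For clause~(3), I would first descend the reflective adjunction $|-|\dashv\iota$ of point~(6) to the localization. By the universal property of left Bousfield localization, the left Quillen functor $|-|:\comse\to\com$ descends to a left Quillen functor $\comsec\to\com$ as soon as it carries $\kb(\I)$ into weak equivalences of $\com$; but point~(8) says $|-|$ sends $\kb(\I)$ to isomorphisms, so this holds and $\iota$ becomes right Quillen into $\comsec$. It then remains to verify the Quillen equivalence via the derived-unit and derived-counit criterion. The counit $|-|\iota\Rightarrow\Id_{\com}$ is an isomorphism because $\com\hookrightarrow\coms$ is full and reflective (point~(6)), so the derived counit is a weak equivalence once the cofibrant-replacement bookkeeping is handled by Ken Brown's lemma. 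For the derived unit at cofibrant $\F$, the unit $\F\to\iota(|\F|)$ is a $\kb(\I)$-local equivalence in $\comse$ by point~(12), hence a weak equivalence in $\comsec$; composing with $\iota$ applied to a fibrant replacement of $|\F|$ and using that the right Quillen functor $\iota$ preserves weak equivalences between fibrant objects shows the derived unit is a weak equivalence. By the criterion, $|-|^{\mathbf{c}}\dashv\iota$ is a Quillen equivalence.

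Finally, clause~(4) is a formal consequence: clause~(1) supplies the left Quillen equivalence $\comsec\to\comsepc$ and clause~(3) the Quillen equivalence $\comsec\to\com$, so $\comsepc\leftarrow\comsec\to\com$ is a zigzag of Quillen equivalences. Since every Quillen equivalence induces an equivalence of homotopy categories, applying $\Ho[-]$ produces the stated diagram $\Ho[\comsepc]\xleftarrow{\simeq}\Ho[\comsec]\xrightarrow{\simeq}\Ho[\com]$, which completes the proof.
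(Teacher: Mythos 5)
Clauses (1) and (4) of your proposal are essentially sound (for (1) the paper argues more directly: since $\comse$ and $\comsep$ have the same weak equivalences they have the same function complexes, hence the same $\kb(\I)$-local equivalences, so the identity descends; your appeal to cofibrancy of the domains of $\kb(\I)$ is unjustified but also unnecessary). The problems are in clauses (2) and (3).

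In clause (2) you argue that a fibrant object of $\comsepc$ is homotopically $\kb(\I)$-local, is therefore \emph{easy}-weakly-equivalent to a genuinely $\kb(\I)$-injective object, and then assert that ``the co-Segal conditions\dots are invariant under the easy weak equivalences.'' That last claim is false: an easy weak equivalence $\sigma:\F\to\G$ is only required to have $\sigma_{\1}$ a weak equivalence, and says nothing about the components $\sigma_{\n}$ for $\n\geq\2$; so knowing that $\G(\1)\to\G(\n)$ is a weak equivalence gives no control over $\F(\1)\to\F(\n)$. The passage from homotopical locality to the verbatim co-Segal conditions, which you correctly identify as the delicate point, cannot be made this way. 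The paper's argument is both simpler and the entire \emph{raison d'\^etre} of the auxiliary model structure $\comsep$: the elements of $\kb(\I)$ are generating cofibrations of $\comsep$, hence cofibrations of $\comsepc$, and they become weak equivalences after localizing at them, hence they are \emph{trivial cofibrations} in $\comsepc$. A fibrant object therefore has the actual (strict) right lifting property against $\kb(\I)$, and Lemma \ref{k-inj-cosegal} then shows that each $\F(u_\n):\F(\1)\to\F(\n)$ is a trivial fibration. No upgrading from ``local'' to ``injective'' is needed.

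In clause (3) your treatment of the derived counit does not close. The derived counit at a fibrant $\C\in\com$ is $|Q\iota\C|\to|\iota\C|\cong\C$, where $Q\iota\C\to\iota\C$ is a cofibrant replacement; Ken Brown's lemma lets a left Quillen functor preserve weak equivalences \emph{between cofibrant objects}, but $\iota\C$ is in general not cofibrant in $\comsec$, so you cannot conclude that $|-|^{\mathbf{c}}$ sends this replacement map to a weak equivalence of $\com$. The missing ingredient is exactly Lemma \ref{lem-reflect-equiv} of the paper: a morphism of strict commutative monoids is a $\kb(\I)$-local equivalence if and only if it is a weak equivalence in $\com$; its proof is not formal (it factors the map as a trivial cofibration followed by a fibration in $\com$ and uses that $\iota$ remains right Quillen into $\comsec$). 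With that lemma in hand the paper avoids the unit/counit bookkeeping altogether: for $\F$ cofibrant and $\C$ fibrant one factors any $\sigma:\F\to\iota\C$ through the unit $\eta:\F\to\iota(|\F|)$, which is a $\kb(\I)$-local equivalence by Proposition \ref{prop-eta-kx-loc-equiv}, applies $3$-for-$2$ to reduce to $\ol{\sigma}:|\F|\to\C$, and then invokes Lemma \ref{lem-reflect-equiv}. Your derived-unit argument is fine, but without the reflection lemma the equivalence is not established.
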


\begin{note}
It can be shown using the HELP Lemma (Homotopy Extension Lifting Property), that if $\M$ is tractable (see \cite{Barwick_localization,Simpson_HTHC}), then any fibrant object in the model category $\comsec$ is a co-Segal commutative monoid. But we didn't treat this here because our motivation was to keep the hypotheses on $\M$ as minimal as possible.
\end{note}

The material provided here should work \ul{as is}, for any operad $\Pa$. But since we have not written this yet, we only include a conjecture. The reader will find a definition of nonsymmetric co-Segal $\Pa$-algebra in \cite{Bacard_lpcosegx}.

\begin{conj}
For any operad $\Pa$, the results of Theorem \ref{main-thm-1}, Theorem \ref{main-thm-2} and Theorem \ref{main-thm-quillen-equiv} hold for co-Segal commutative $\Pa$-algebras.
\end{conj}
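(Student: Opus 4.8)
The plan is to reproduce, almost verbatim, the twelve-step program of the introduction with the commutative operad $Com$ replaced throughout by an arbitrary operad $\Pa$, isolating the handful of inputs that genuinely depend on $\Pa$ and checking that each retains the property it must have. The first input is the indexing data. For $Com$ the relevant structure is a symmetric monoidal lax functor $\F\colon(\Phepiop,+,\0)\to\M$; for a general $\Pa$ one replaces $(\Phepiop,+,\0)$ by the direct (Reedy) indexing category $\mathcal{I}_{\Pa}$ attached to $\Pa$ in \cite{Bacard_lpcosegx}, whose objects record the arities and whose morphisms record operadic composition, symmetry and units, and one works with the corresponding lax diagrams. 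For each arity $l$ such a diagram produces the zigzag
$$\Pa(l)\otimes_{\Sigma_l}A^{\otimes l}\to A(l)\xleftarrow{\sim}A,$$
with $A=\F(\1)$ the underlying object; dropping the requirement that the transition maps be weak equivalences gives the category of co-Segal $\Pa$-\emph{premalgebras} $\Pa\tx{-}\mathbf{PreAlg}_\S(\M)$. One first checks that $\mathcal{I}_{\Pa}$ is a direct category, so that the $A(l)$ genuinely form a Reedy resolution of $A$; for $\Pa=Com$ this recovers that $\Phepiopl$ is direct.

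Next come the purely formal steps, which should transfer with no new idea. Because $\M$ is locally presentable and $\Pa\tx{-}\mathbf{PreAlg}_\S(\M)$ is a category of lax diagrams over a locally presentable base, it is again locally presentable, hence complete and cocomplete (Step 2). Evaluation at the initial entry gives an adjunction $\Ev_\1:\Pa\tx{-}\mathbf{PreAlg}_\S(\M)\leftrightarrows\M:Free$ (Step 3), and one right-induces the \emph{easy} model structure along $\Ev_\1$ by the standard transfer theorem for combinatorial categories, with weak equivalences and fibrations detected at $\1$ (Step 4); left properness of the easy structure is inherited from $\M$ exactly as before (Step 5). The reflective inclusion $\iota\colon\Pa\tx{-}\mathbf{Alg}(\M)\hookrightarrow\Pa\tx{-}\mathbf{PreAlg}_\S(\M)$ of strict $\Pa$-algebras, with left adjoint $|-|$ collapsing the resolution, is again formal (Step 6), and when the transferred model structure on strict $\Pa$-algebras exists it agrees with the easy structure, so that $\iota$ is right Quillen (Step 7).

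The one genuinely $\Pa$-sensitive point, and the main obstacle, is the homotopical property underlying Step 4: that the composite $\Ev_\1\circ Free\colon\M\to\M$ preserves cofibrations and, crucially, trivial cofibrations. For $Com$ this is ``one of the major differences'' from strict algebras, and it is precisely here that positive characteristic and the failure of $\Sigma$-splitness would break the strict theory. The co-Segal formalism is designed to make this work uniformly: by the motto, the $\Sigma_l$-action is routed through the auxiliary object $A(l)$ in $\Ho(\M)$ rather than through $A$ itself, so that $\Ev_\1\circ Free$ never forces a homotopically uncontrolled quotient $\Pa(l)\otimes_{\Sigma_l}(-)^{\otimes l}$ onto the underlying object. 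Concretely, I would compute $Free$ as a coproduct over the arities of $\mathcal{I}_{\Pa}$ and analyze the latching maps of $\Ev_\1\circ Free$ applied to a (trivial) cofibration; the task is to show that the Reedy structure of $\mathcal{I}_{\Pa}$ confines every $\Pa(l)\otimes_{\Sigma_l}(\cdots)$ contribution to the non-initial entries, so that only a (trivial) cofibration survives at $\1$. This latching analysis is the technical heart: for $Com$ it is carried out by hand, and the work for general $\Pa$ is to verify that the operadic composition morphisms of $\mathcal{I}_{\Pa}$ interact with the Reedy degree so as to leave the initial entry untouched, independently of any $\Sigma$-splitness hypothesis or of the characteristic of $\M$.

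Granting that property, the remaining steps and the three target theorems follow the commutative template. One constructs from a set $\I$ of generating cofibrations of $\M$ a set $\kb(\I)$ in $\Pa\tx{-}\mathbf{PreAlg}_\S(\M)$ whose injective objects satisfy the co-Segal conditions and which $|-|$ sends to isomorphisms (Step 8); this set is assembled arity-by-arity and only requires that forcing each transition map $A(l)\to A(l')$ to be a weak equivalence be expressible by a small set of maps, which holds since $\M$ is combinatorial. The identity then gives a left Quillen equivalence from the easy to the ``$+$'' structure in which the elements of $\kb(\I)$ are generating cofibrations (Steps 9--10), and one checks that the unit $\F\to\iota(|\F|)$ is a $\kb(\I)$-local equivalence (Step 11) by the same reduction to the initial entry. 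Finally, Smith's theorem for left proper combinatorial model categories produces the left Bousfield localizations of both structures at $\kb(\I)$ (Step 12), yielding the analogues of Theorem \ref{main-thm-1} (existence), Theorem \ref{main-thm-2} (comparison of the two localized structures), and Theorem \ref{main-thm-quillen-equiv} (the zigzag of Quillen equivalences)
$$(\Pa\tx{-}\mathbf{Alg})_{e+}^{\mathbf c}\longleftarrow(\Pa\tx{-}\mathbf{Alg})_{e}^{\mathbf c}\xrightarrow{\;|-|^{\mathbf c}\;}\Pa\tx{-}\mathbf{Alg}(\M),$$
under exactly the hypotheses that $\M$ be combinatorial and left proper, together with, for the last comparison, the existence of the transferred model structure on strict $\Pa$-algebras.
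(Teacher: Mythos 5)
You should first note that the paper contains no proof of this statement: it is stated explicitly as a conjecture, with the author writing that the general operadic case ``has not been written yet'' and deferring even the \emph{definition} of symmetric co-Segal $\Pa$-algebras to future work (the cited preprint \cite{Bacard_lpcosegx} only defines \emph{nonsymmetric} co-Segal $\Pa$-algebras). So there is no paper proof to match, and the standard for your proposal is whether it constitutes a proof on its own. It does not: it is a roadmap that correctly identifies the architecture of the commutative case, but the genuinely $\Pa$-dependent steps are announced as tasks rather than carried out. You invoke an indexing category $\mathcal{I}_{\Pa}$ ``attached to $\Pa$ in \cite{Bacard_lpcosegx}'' and ask the reader to check it is direct and that its operadic composition maps respect the Reedy degree; in the symmetric setting this object is not constructed anywhere, and building it (with the $\Sigma_l$-actions and the laxity maps $\Pa(l)\otimes_{\Sigma_l}A^{\otimes l}\to A(l)$ correctly packaged so that the analogues of Scholium \ref{schol1} and Proposition \ref{property-of-adj-m-to-coms} go through) is itself a substantial part of what the conjecture asks for. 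Likewise, your ``technical heart'' --- that $\Ev_\1\circ Free$ preserves cofibrations and trivial cofibrations --- is stated with a strategy (``I would compute $Free$ as a coproduct over the arities and analyze the latching maps'') but no computation; in the commutative case this rests on the concrete identification of $\Ev_\1\circ\Gamma\Fb^{\1}$ with $I\sqcup -$, which in turn uses the right adjoint $taut_u$ built from the trivial monoid on the terminal object, and none of these ingredients is verified for general $\Pa$.

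The most serious omission is the analogue of Section \ref{section-2-constant}. Your Step 11 --- that the unit $\F\to\iota(|\F|)$ is a $\kb(\I)$-local equivalence --- is dispatched ``by the same reduction to the initial entry,'' but in the paper this is exactly the part flagged as ``very important, and unfortunately a bit technical'': it requires the explicit pushout computation of Lemma \ref{lem-deux-fondamental}, showing that pushing out an element $\Psi_\2(\av)$ against a $2$-constant premonoid changes nothing above degree $\1$ and induces an isomorphism after $|-|$, and then the small-object-argument analysis of Proposition \ref{pdef-deux-constant}. For a general operad the corresponding pushouts involve the terms $\Pa(l)\otimes_{\Sigma_l}(-)^{\otimes l}$ in the laxity data, and it is precisely here --- not in the formal transfer and localization steps, which are indeed routine given left properness and combinatoriality --- that one must check the symmetric-group quotients stay confined away from the initial entry. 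Until that computation is done, the claim that the three target theorems follow ``under exactly the hypotheses that $\M$ be combinatorial and left proper'' is a conjecture restated, not proved; your proposal is a faithful and plausible program, consistent with the paper's own stated expectation, but it does not close the gap the paper deliberately left open.
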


\subsection{Organization of the paper}

The material of Sections \ref{sec-def} and \ref{sec-prop} provides the definition of the objects we're studying, as well as the categorical properties of the category $\coms$ of all co-Segal commutative premonoids.

In Section \ref{sec-transf}, we establish the first model structure on $\coms$ (called easy model structure) . We also outline how it agrees with the model structure on usual commutative monoids.\\

Then we introduce in Section \ref{section-loc-set} the set $\kb(\I)$ that will be used to localize the previous model structure.\\

The material of Section \ref{section-2-constant} is very important, and unfortunately a bit technical. The main result there is the content of Proposition \ref{pdef-deux-constant}.\\

After this, we put a new model structure on $\coms$ in Section \ref{section-new-model-str}. And we discuss the left Bousfield localizations of the two model categories in Section \ref{sec-bousfield}.\\
Finally we compare the various homotopy theories in Section \ref{sec-Quillen-equiv} with our main theorem (Theorem \ref{main-thm-quillen-equiv}).

\begin{note}
We would like to warn the reader that most \emph{forgetful functors} will be denoted by the same letter  `$\Ub$'.
\end{note}
\newpage
\section{Commutative co-Segal algebras}\label{sec-def}
Let $\M=(\ul{M},\otimes, I)$ be a symmetric monoidal closed category, regarded as a $2$-category with a single object. The example we have in mind for this paper is precisely $\M= (C(\k), \otimes_\k,\k)$.

\begin{nota}
\begin{enumerate}
\item Let $\n=\{0,...,n-1\}$ be the usual $n$-elements set. In particular $\0$ is the empty set.
\item Let $(\Phi,+,\0)$ be the symmetric monoidal category formed by the sets $\n=\{0,...,n-1\}$ with all functions between them. The monoidal structure $+$ is the disjoint union.  
\item Let $(\Phi_{\tx{epi}},+,\0)$ be the symmetric monoidal subcategory of  $(\Phi,+,\0)$ having the same objects but only surjective functions. Note that $\0$ is \emph{isolated} there i.e, there is no epimorphism involving $\0$ other than the identity. The only reason we keep it is to avoid a language of semi-monoidal category $(\Phepi,+)$. But when it's convenient we will ignore the presence of $\0$ in $\Phepi$.
\item Let $\Phepipl \subset \Phepi$ be the full subcategory not containing $\0$.
\item Let $(\Delta^+, +,0)$ be the category of finite ordinals and order preserving maps.
\item  Let $(\Depipl,+, 0)$  be the subcategory of epimorphisms. 
\item  In each case we have a functor that forgets the order:
$$ (\Delta^+, +,0) \to (\Phi,+,\0),$$
$$(\Depipl,+, 0) \to (\Phi_{\tx{epi}},+,\0)$$
\item We will denote by $(\Phi^{\tx{op}},+,\0)$ and  $(\Phepiop,+,\0)$ the corresponding opposite symmetric monoidal categories.
\end{enumerate}
\end{nota}
\begin{note}
We assume that the reader is familiar with the notion of symmetric monoidal  lax functor. 
Given $\M=(\ul{M},\otimes, I)$ as above, we will identify $\M$ and the underlying category $\ul{M}$ when it's convenient.
\end{note}

\begin{df}
Let $\M$ be a symmetric monoidal (model) category. A co-Segal commutative monoid $\F$ of $\M$ is a symmetric monoidal lax functor 
$$\F: (\Phepiop,+,\0) \to \M, $$
that satisfies the following conditions. 
\begin{enumerate}

\item $\F$ is a normal lax functor:
\begin{itemize}[label=$-$]

\item The map $I \to \F(\0)$ is an isomorphism;
\item The following maps are natural isomorphisms.
$$\F(\0) \otimes \F(\n)\xrightarrow{\cong} \F(\n).$$
\end{itemize}
\item $\F$ satisfies the co-Segal conditions, i.e, the underlying functor 
$$\F: \Phepiopl \to \M $$
is a diagram of weak equivalences:
 $$\F(\1) \xrightarrow{\sim} \F(\2) \rightrightarrows \cdots \F(\n) \cdots.$$ 
\item $\F$ is weakly unital i.e, there exists a map $e:I \to \F(\1)$ such that the following commutes.
\begin{equation}\label{diag-unitality}
\xy
(0,18)*+{I \otimes \F(\1)}="W";
(0,0)*+{\F(\1) \otimes \F(\1) }="X";
(40,0)*+{\F(\2)}="Y";
(40,18)*+{\F(\1)}="E";
{\ar@{->}^-{\varphi}"X";"Y"};
{\ar@{->}^-{e \otimes \Id}"W";"X"};
{\ar@{->}^-{\cong}"W";"E"};
{\ar@{->}^-{\sim}"E";"Y"};
\endxy
\end{equation}
\end{enumerate}
If $\F$ doesn't satisfies the co-Segal conditions, then $\F$ will be called a commutative (co-Segal) premonoid.
\end{df}

\begin{conv}
\begin{enumerate}
\item We will always assume that $\F(\0)=I$ and that the map $I \to \F(\0)$ is the identity. 
\item Given a functor $\F: \Phepiopl \to \M$, we will implicitly extend it to a functor defined over the entire category $\Phepiop$ whose value at $\0$ is $I$. Similarly we will extend any natural transformation by letting the component at $\0$ be the identity $\Id_I: I \to I$.
\end{enumerate}

\end{conv}
We need to say what  the morphisms of co-Segal premonoids are. 
\begin{df}
Let $\F$ and $\G$ be commutative co-Segal premonoids. A morphism $\sigma: \F \to \G$ is a natural transformation $\sigma \in \Hom(\Phepiop,\M)$ that satisfies the following conditions.
\begin{enumerate}
\item For every $\n,\m$ the following commutes.
\begin{equation}
\xy
(0,18)*+{\F(\n) \otimes \F(\m)}="W";
(0,0)*+{\G(\n) \otimes \G(\m) }="X";
(40,0)*+{\G(\n+\m)}="Y";
(40,18)*+{\F(\n+\m)}="E";
{\ar@{->}^-{\varphi}"X";"Y"};
{\ar@{->}^-{\sigma \otimes \sigma}"W";"X"};
{\ar@{->}^-{\varphi}"W";"E"};
{\ar@{->}^-{\sigma}"E";"Y"};
\endxy
\end{equation}
\item The following commutes.
\begin{equation}
\xy
(10,18)*+{I}="W";
(0,0)*+{\F(\1)}="X";
(20,0)*+{\G(\1)}="Y";
{\ar@{->}^-{\sigma}"X";"Y"};
{\ar@{->}_-{e}"W";"X"};
{\ar@{->}^-{e}"W";"Y"};
\endxy
\end{equation}
\end{enumerate}
We will denote by $\coms$ the category of co-Segal premonoids and morphisms between them. There is a forgetful functor
$$\Ub: \coms \to \Hom(\Phepiopl,\M).$$
\end{df}

The following proposition will be given without a proof, because it's a straightforward application of the definition. 
\begin{prop} 
Let $\M$ be a symmetric monoidal (model) category.
\begin{enumerate}
\item We have an equivalence between the following data.
\begin{itemize}[label=$-$]
\item A commutative co-Segal monoid $\F$ such that the diagram $\F: \Phepiopl \to \M$ is a constant diagram.
\item A usual strict commutative monoid of $\M$.
\end{itemize}
\item Let $\com$ be the category of usual strict commutative monoids. Then we have a fully faithful inclusion functor
$$\iota: \com \to \coms.$$
\end{enumerate}
\end{prop}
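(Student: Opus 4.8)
The plan is to set up an explicit dictionary between the lax-functor data of a \emph{constant} co-Segal commutative monoid and the classical data $(A,\mu,e)$ of a strict commutative monoid, to check the two translations are mutually inverse, and then to read off full faithfulness of $\iota$ from the same dictionary.

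I would first handle part~(1) on objects. Given a co-Segal commutative monoid $\F$ whose underlying diagram $\F\colon\Phepiopl\to\M$ is constant, write $A:=\F(\1)$, so that $\F(\n)=A$ for all $n\geq 1$ and every structure map of the diagram is $\Id_A$. I set $\mu:=\varphi_{\1,\1}\colon A\otimes A=\F(\1)\otimes\F(\1)\to\F(\2)=A$ and let $e\colon I\to\F(\1)=A$ be the unit coming from the weak-unitality axiom. Associativity of $\mu$ then falls out of the associativity coherence of the laxity maps $\varphi$ evaluated at $(\1,\1,\1)$, because the image under $\F$ of the associativity constraint of $(\Phepiop,+)$ is $\Id_A$; commutativity falls out of the symmetry coherence at $(\1,\1)$, where the image of the braiding is again $\Id_A$; and the unit axioms come from the square \eqref{diag-unitality}, whose right-hand weak equivalence $\F(\1)\xrightarrow{\sim}\F(\2)$ is $\Id_A$ in the constant case. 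Conversely, from a strict commutative monoid $(A,\mu,e)$ I build a constant $\F$ by declaring $\F(\n)=A$, all functorial maps equal to $\Id_A$, the unit $I\to\F(\0)=I$ equal to $\Id_I$, the laxity maps $\varphi_{\n,\m}:=\mu$ for $\n,\m\geq\1$, and the maps involving $\0$ equal to the canonical unit constraints of $\M$ forced by normality. The co-Segal conditions hold trivially (identities are weak equivalences), and the lax-functor coherences become precisely associativity, commutativity and unitality of $(A,\mu,e)$.

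The step I expect to require the most care is showing these two assignments are mutually inverse, the crux being that a constant $\F$ carries no data beyond $\mu=\varphi_{\1,\1}$, i.e. that all laxity maps $\varphi_{\n,\m}$ coincide with $\mu$. Here I would use the combinatorics of $\Phepi$: for each $n\geq 1$ there is a unique surjection $\n\twoheadrightarrow\1$, hence a unique morphism $\1\to\n$ in $\Phepiop$, and its image under the constant functor $\F$ is $\Id_A$. Feeding these maps into the naturality squares of $\varphi$ in each variable collapses $\varphi_{\n,\m}$ onto $\varphi_{\1,\1}=\mu$. Consequently the round trip ``monoid $\to\F\to$ monoid'' is the identity by construction, and ``$\F\to$ monoid $\to\F$'' recovers $\F$ on the nose.

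For part~(2) I would define $\iota$ on objects by the construction above and on a monoid morphism $f\colon A\to B$ by the natural transformation with all components equal to $f$. Well-definedness amounts to the two compatibility conditions for a morphism in $\coms$: the square involving $\varphi$ reduces to $f\circ\mu_A=\mu_B\circ(f\otimes f)$ and the unit triangle to $f\circ e_A=e_B$, i.e. exactly the conditions that $f$ be a map of monoids. Faithfulness and fullness then follow once more from naturality along the surjections $\n\twoheadrightarrow\1$: any $\sigma\colon\iota(A)\to\iota(B)$ in $\coms$ has $\sigma_\n=\sigma_\1$ for all $n$, so $\sigma$ is determined by $\sigma_\1$, and the two compatibility conditions force $\sigma_\1$ to be a monoid morphism. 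Hence $f\mapsto\iota(f)$ is a bijection $\Hom_{\com}(A,B)\xrightarrow{\cong}\Hom_{\coms}(\iota A,\iota B)$, which is the desired full faithfulness; together with part~(1) it identifies $\com$ with the full subcategory of constant-diagram co-Segal monoids.
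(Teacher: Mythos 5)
Your proof is correct. The paper in fact states this proposition \emph{without} proof, calling it a straightforward application of the definition, and your writeup supplies precisely the verification being left implicit. The one step that genuinely needs an argument --- that a constant lax diagram carries no data beyond $\varphi_{\1,\1}$, because naturality of the laxity maps along the unique morphisms $\1\to\n$ of $\Phepiopl$ (whose images under a constant functor are identities) collapses every $\varphi_{\n,\m}$ onto $\mu$, while the image of the transposition of $\2$ being the identity turns the symmetry coherence into commutativity of $\mu$ --- is exactly where you put the care, and it is handled correctly; the same naturality argument then gives $\sigma_{\n}=\sigma_{\1}$ and hence full faithfulness of $\iota$.
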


\begin{warn}
We would like to warn the reader about our notation. As a general rule, whenever there is a subscript `$\S$', it means that we are working in co-Segal settings.
\end{warn}

\begin{rmk}
If we follow Hinich's notation `$\#$' for the forgetful functor (see \cite{Hinich_haha}), we have a commutative diagram:
\begin{equation}
\xy
(0,18)*+{\com}="W";
(0,0)*+{\coms }="X";
(40,0)*+{\Hom(\Phepiopl,\M)}="Y";
(40,18)*+{\M}="E";
{\ar@{->}^-{\#}"X";"Y"};
{\ar@{->}^-{\iota}"W";"X"};
{\ar@{->}^-{\#}"W";"E"};
{\ar@{->}^-{}"E";"Y"};
\endxy
\end{equation}

The functor $\M \hookrightarrow \Hom(\Phepiopl,\M)$ is the functor that takes an object $m \in \M$ to the constant diagram of value $m$. In our case since $\1$ is initial in $\Phepiopl$, this functor is the left adjoint  to the evaluation functor at $\1$. Following Hirschhorn's notation in \cite{Hirsch-model-loc}, we should denote by $\Fb^{\1}$ this functor. In particular we have an adjunction
$$\Ev_\1:\Hom(\Phepiopl,\M) \leftrightarrows \M: \Fb^{\1}.$$
\end{rmk}

\section{Categorical properties of commutative premonoids}\label{sec-prop}
The category $\Phepiopl$ is a direct category that has an initial object corresponding to $\1$. The advantage of such category is that we can modify diagram indexed by such category as outline hereafter.
\begin{schol} \label{schol1}
\
\begin{enumerate}
\item Let $\J$ be a \ul{direct} category with an initial object $j_0$ and let $F: \J \to \B$ be a functor. Then given any morphism $h:b \to F(j_0)$ in $\B$, we can define a diagram $h^\star F: \J \to \B$ by simply changing the value $F(j_0)$ to $b$.  More precisely $h^\star F$ is the composite of the chain of functors:
$$\J \xrightarrow{\simeq} (j_0\downarrow \J) \xrightarrow{F} (F(j_0)\downarrow \B)\xrightarrow{h^\star} (b\downarrow \B)\to \B.$$
We have a natural transformation $h^\star F \to F$ whose component at $j_0$ is $h$. Every other component $[h^\star F] (j) \to F(j)$ is the identity map.

\item Let $\F: (\Phepiopl,+,\0) \to \M$ be a commutative premonoid in $\M$ and let $h: m \to \F(\1)$ be any morphism in $\M$. Assume that:
\begin{itemize}[label=$-$]
\item there is a map  $\tld{e}: I \to m$, and
\item this map fits in a factorization of $e:I \to \F(\1)$ as:
$$  I \xrightarrow{e} \F(\1)= I \xrightarrow{\tld{e}} m \xrightarrow{h} \F(\1).$$
\end{itemize}
Then the functor $h^\star \F$ forms a symmetric monoidal normal lax functor $$h^\star \F: (\Phepiopl,+,\0)  \to \M,$$ 
and there is a transformation of lax functors $h^\star \F \to \F$ induced by the natural transformations $h^\star \F \to \F$.

Indeed, we get the laxity maps involving the new object $m$ as:
$$m \otimes \F(\n) \xrightarrow{h \otimes \Id} \F(\1) \otimes \F(\n) \to \F(\1 + \n);$$
$$m \otimes m \xrightarrow{h \otimes h} \F(\1) \otimes \F(\1) \to \F(\2);$$
$$ \F(\n) \otimes m \xrightarrow{\Id \otimes h}\F(\n) \ox \F(\1) \to \F(\n +\1).$$
It takes a little effort to see that these laxity maps remain associative and compatible with the symmetry of $\otimes$.
\item The lax diagram $h^{\star}\F$ satisfies the unitality condition as in the diagram \eqref{diag-unitality}. Indeed everything commutes in the diagram below.
\[
\xy
(0,15)*+{I \otimes m}="A";
(30,15)*+{m}="B";
(0,0)*+{I \otimes \F(\1)}="C";
(-30,0)*+{m \otimes m}="E";
(30,0)*+{\F(\1)}="D";
(0,-15)*+{\F(\1) \otimes \F(\1)}="X";
(30,-15)*+{\F(\2)}="Y";
{\ar@{->}^-{\cong}"A";"B"};
{\ar@{->}^-{\Id \otimes h}"A";"C"};
{\ar@{->}^-{h}"B";"D"};
{\ar@{->}^-{\cong}"C";"D"};
{\ar@{->}^-{}"X";"Y"};
{\ar@{->}^-{e \otimes \Id}"C";"X"};
{\ar@{->}^-{}"D";"Y"};
{\ar@{->}_-{\tld{e} \otimes \Id}"A";"E"};
{\ar@{->}_-{h \otimes h}"E";"X"};
\endxy
\] 
\end{enumerate}
\end{schol}

\subsection{Limits, filtered colimits and monadicity}
Let $\M$ be symmetric monoidal closed category, that is also complete and cocomplete. Recall that being symmetric closed implies that every functor $m \otimes -$ preserves colimits. The following proposition is classical, there are many references in the literature.
\begin{prop}
In the category $\com$ the following hold. 
\begin{enumerate}
\item Limits are computed in $\M$. In particular,  the forgetful functor $\Ub: \com \to \M$ preserves them.
\item Filtered colimits are also computed in $\M$ and $\Ub$ preserves them.
\item Coequalizers of $\Ub$-split pairs are computed in $\M$ and $\Ub$ preserves them.
\end{enumerate}
\end{prop}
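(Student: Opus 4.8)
The plan is to identify $\com$ as the category of algebras for the free commutative monoid monad on $\M$, and then read off all three statements from standard monadicity facts. Concretely, because $\M$ is cocomplete and symmetric monoidal closed---so that the functor $- \otimes m$ preserves all colimits for every $m$---the endofunctor
$$T(X) = \coprod_{n \geq 0} X^{\otimes n}/\Sigma_n$$
is well defined and carries a canonical monad structure, and $\com$ is precisely the Eilenberg--Moore category $\M^T$ with $\Ub$ its forgetful (right adjoint) functor. I would state this identification first, citing it as classical, since the remaining work is purely formal once it is in hand.

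For (1), I would invoke the standard fact that the forgetful functor of any Eilenberg--Moore category creates all limits. As $\M$ is complete, this immediately yields that $\com$ is complete, that limits are computed in $\M$, and that $\Ub$ preserves them.

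For (2), the one substantive point is that $T$ preserves filtered colimits; granting this, $T^2$ does too (a composite of functors preserving filtered colimits), and the usual creation criterion gives that $\Ub$ creates every colimit preserved by $T$ and $T^2$, hence all filtered ones. To check that $T$ preserves filtered colimits it suffices to treat $\otimes$, the quotients $(-)/\Sigma_n$, and the coproduct $\coprod_n$ separately: the last two are colimit functors and so commute with filtered colimits automatically. For $\otimes$ I would use that it preserves all colimits in each variable (closedness) together with the finality of the diagonal $I \to I \times I$ for filtered $I$, which gives $(\colim_i X_i) \otimes (\colim_i Y_i) \cong \colim_i (X_i \otimes Y_i)$; an induction then shows $X \mapsto X^{\otimes n}$ preserves filtered colimits.

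For (3), this is exactly one clause of Beck's monadicity theorem: the forgetful functor of an Eilenberg--Moore category has, and creates, coequalizers of $\Ub$-split pairs. I would simply invoke it. The only step that is not a direct citation is the verification that $\otimes$ preserves filtered colimits, so I expect that to be the main (though mild) obstacle; everything else follows formally from the monadicity of $\Ub$.
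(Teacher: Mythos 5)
Your argument is correct; note that the paper offers no proof of this proposition at all, declaring it classical and deferring to the literature. Your route---identifying $\com$ with the Eilenberg--Moore category of the free commutative monoid monad $T(X)=\coprod_{n\geq 0}X^{\otimes n}/\Sigma_n$, then invoking creation of limits by monadic forgetful functors, creation of colimits preserved by $T$ and $T^2$ (with the one substantive check that $\otimes$ preserves filtered colimits in both variables jointly, via finality of the diagonal), and Beck's clause on $\Ub$-split coequalizers---is precisely the standard argument those references supply, so it matches the intended proof.
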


The same proposition holds for the category $\coms$, as the reader will check.
\begin{prop}\label{closure-limit-filt}
In the category $\coms$ the following hold. 
\begin{enumerate}
\item Limits are computed in $ \Hom(\Phepiopl,\M)$, whence level-wise in $\M$. In particular,  the forgetful functor $\Ub: \coms \to \Hom(\Phepiop,\M)$ preserves them.
\item Filtered colimits are also computed in $ \Hom(\Phepiopl,\M)$ and $\Ub$ preserves them.
\item Coequalizers of $\Ub$-split pairs are computed in $ \Hom(\Phepiopl,\M)$ and $\Ub$ preserves them.
\end{enumerate}
In particular the category $\coms$ is complete and closed under filtered colimits.
\end{prop}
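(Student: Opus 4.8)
The plan is to treat each of the three closure properties by computing the relevant (co)limit objectwise in $\Hom(\Phepiopl,\M)$ and then showing that the resulting underlying diagram carries a canonical premonoid structure realizing it as the (co)limit in $\coms$. The only delicate point is the behaviour of the laxity maps $\varphi:\F(\n)\otimes\F(\m)\to\F(\n+\m)$, since the remaining premonoid data (normality $\F(\0)=I$ and weak unitality) is imposed objectwise and is inherited automatically once the laxity maps are constructed. I would organize the argument around the observation that the three classes of (co)limits in question are exactly those with which $\otimes$ interacts well. Note first that the distinction between $\Phepiopl$ and $\Phepiop$ is immaterial: by our standing convention the value at $\0$ is fixed to be $I$, so it contributes nothing to any (co)limit.

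For limits, let $\{\F_i\}$ be a diagram in $\coms$ and set $\F(\n):=\lim_i \F_i(\n)$, computed in $\M$. The laxity map on $\F$ is the composite
\[
\big(\lim_i \F_i(\n)\big)\otimes\big(\lim_i \F_i(\m)\big) \xrightarrow{\can} \lim_i\big(\F_i(\n)\otimes\F_i(\m)\big) \xrightarrow{\lim_i \varphi_i} \lim_i \F_i(\n+\m),
\]
where the first arrow is the canonical comparison from the tensor of limits to the limit of tensors, assembled from the projections; it exists for any functor, so no exactness of $\otimes$ is required. I would then check that the associativity, unit, and symmetry coherences for this $\varphi$ follow from those of each $\varphi_i$ by the universal property of the limit. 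Normality holds since $\lim_i I=I$, and weak unitality is inherited from the limit of the unit maps $e_i$. The functor $\Ub$ preserves these limits by construction, and the universal property in $\coms$ is then immediate.

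For filtered colimits the argument is dual, but the comparison map now points the other way and is an \emph{isomorphism}: because $\M$ is symmetric monoidal closed, each functor $m\otimes(-)$ preserves colimits, and a standard diagonal-cofinality argument upgrades this to joint preservation of filtered colimits, giving
\[
\big(\colim_i \F_i(\n)\big)\otimes\big(\colim_i \F_i(\m)\big) \xrightarrow{\;\cong\;} \colim_i\big(\F_i(\n)\otimes\F_i(\m)\big).
\]
Composing the inverse of this isomorphism with $\colim_i \varphi_i$ produces the laxity map on the objectwise colimit, and coherence, normality, and weak unitality transfer exactly as before. The same mechanism handles coequalizers of $\Ub$-split pairs: a split coequalizer is absolute, hence preserved by every functor, in particular by $(-)\otimes(-)$ in each variable and therefore jointly on the split diagram, so the laxity maps descend to the objectwise coequalizer.

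The main obstacle is the joint preservation isomorphism in the filtered-colimit case; the restriction to filtered colimits (rather than arbitrary ones) is essential precisely because $\otimes$ fails to commute with general colimits, and that same failure is what forces the coequalizer clause to be stated only for $\Ub$-split (absolute) pairs. Once this preservation property is secured, verifying the coherence axioms is a routine if tedious diagram chase governed by the appropriate universal property. I would remark in passing that the three clauses are exactly the hypotheses of Beck's monadicity theorem for $\Ub$, so they also yield monadicity of $\coms$ over $\Hom(\Phepiopl,\M)$. The final assertion that $\coms$ is complete and closed under filtered colimits then follows by combining the limit clause with the filtered-colimit clause.
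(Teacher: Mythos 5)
Your proof is correct and is the argument the paper has in mind: the paper offers no proof of this proposition (it is left to the reader as the analogue of the classical statement for $\com$), and your three key points — the canonical comparison map into the limit of tensors, finality of the diagonal together with preservation of filtered colimits in each variable of the closed structure, and absoluteness of split coequalizers — are exactly the standard ingredients. The only wording I would adjust is ``Normality holds since $\lim_i I = I$'': for a non-connected index category that equation is not the right justification, but since (as you note) the value at $\0$ is fixed to be $I$ by the paper's convention and the (co)limits are formed in $\Hom(\Phepiopl,\M)$, nothing is actually at stake.
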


We want to show that the forgetful functor $\Ub:\coms \to \Hom(\Phepiopl,\M)$ a left adjoint $\Gamma:\Hom(\Phepiopl,\M) \to \coms$. And we want to show that $\Gamma$ doesn't change much the value at $\1$. 

\subsection{Existence of a left adjoint}
We will use  many times the main theorem in \cite{Adamek-Rosicky-reflection} to show that there is an abstract left adjoint $\Gamma:\Hom(\Phepiopl,\M) \to \coms$ to the forgetful functor $\Ub$, when $\M$ is locally presentable. For the reader's convenience we recall this theorem hereafter.
\begin{thm}[\cite{Adamek-Rosicky-reflection}]\label{adamek-refl}
Let $\mathscr{H}$ be a locally presentable category, and $\alpha$ a regular cardinal. Then each full subcategory of $\mathscr{H}$ closed under limits and $\alpha$-filtered colimits is reflective in $\mathscr{H}$.
\end{thm}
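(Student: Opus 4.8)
The plan is to exhibit the inclusion functor $i:\mathscr{A}\hookrightarrow\mathscr{H}$ as a right adjoint. Since $\mathscr{A}$ is a \emph{full} subcategory closed under all limits, limits in $\mathscr{A}$ are computed exactly as in $\mathscr{H}$, so $i$ is fully faithful and preserves every limit. The whole problem therefore reduces to producing a left adjoint to a limit-preserving functor, i.e.\ to verifying some form of the adjoint functor theorem. I would not attempt the General Adjoint Functor Theorem by hand at the outset; instead I would first upgrade $i$ to an \emph{accessible} limit-preserving functor between locally presentable categories, a setting in which the existence of a left adjoint is automatic.

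Concretely, first fix a regular cardinal $\mu\geq\alpha$ for which $\mathscr{H}$ is locally $\mu$-presentable. Because $\mu\geq\alpha$, the subcategory $\mathscr{A}$ is closed in $\mathscr{H}$ under $\mu$-filtered colimits as well as under all limits. The central step is then to prove that $\mathscr{A}$ is itself \emph{accessible} (hence, being also complete, locally presentable): a full subcategory of a locally presentable category closed under $\mu$-filtered colimits and under limits is accessible. This is where the structure theory of accessible categories does the work — one presents $\mathscr{A}$ as the category of models of a suitable limit theory, equivalently as a projective-cone (orthogonality) class cut out inside $\mathscr{H}$, and invokes the theorem that such categories of models are accessible. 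Once $\mathscr{A}$ is known to be $\mu'$-accessible for some regular $\mu'\geq\mu$, the inclusion $i$ preserves all limits and, after possibly enlarging $\mu'$, preserves $\mu'$-filtered colimits, so $i$ is an accessible limit-preserving functor between locally presentable categories. By the adjoint functor theorem for such functors, $i$ admits a left adjoint, which is precisely a reflector, and $\mathscr{A}$ is reflective.

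The main obstacle is exactly the accessibility of $\mathscr{A}$, and inside it the cardinal bookkeeping: one must select a single regular cardinal $\mu'$ — sharply above $\mu$ in the sense of the accessibility calculus — for which the $\mu'$-presentable objects of $\mathscr{A}$ form a \emph{small} strong generator and for which $i$ preserves $\mu'$-filtered colimits simultaneously. If one prefers to avoid invoking accessibility of $\mathscr{A}$ as a black box, the alternative is to verify the solution-set condition of the General Adjoint Functor Theorem directly: given $X\in\mathscr{H}$, write $X$ as a $\mu$-filtered colimit of $\mu$-presentable objects and show that every morphism $X\to A$ with $A\in\mathscr{A}$ factors through a member of a bounded set of morphisms out of $X$, the bound on the cardinality being controlled by the $\mu$-presentability rank of $X$. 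Either route has the same hard technical kernel: converting the purely closure-theoretic hypothesis ``closed under limits and $\alpha$-filtered colimits'' into an honest smallness or solution-set statement, which is precisely the content that makes the local presentability of $\mathscr{H}$ indispensable.
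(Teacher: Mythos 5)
First, a point of comparison: the paper itself contains no proof of this statement --- it is imported verbatim from the cited reference \cite{Adamek-Rosicky-reflection}, so your attempt must be measured against the argument of Ad\'amek--Rosick\'y, and there it diverges at exactly the load-bearing step. Your framing is correct as far as it goes: $\mathscr{A}$ is complete (limits are computed as in $\mathscr{H}$), the inclusion preserves limits, and if you knew $\mathscr{A}$ were accessible and accessibly embedded, the adjoint functor theorem for accessible functors between locally presentable categories would indeed finish the proof. The gap is your ``central step,'' the claim that a full subcategory of a locally presentable category closed under limits and $\mu$-filtered colimits is automatically accessible, justified by ``presenting $\mathscr{A}$ as the category of models of a suitable limit theory, equivalently as a projective-cone (orthogonality) class.'' No such presentation follows from the closure hypotheses, and no theorem of the structure theory delivers it: the classical reflection theorem of Ad\'amek--Rosick\'y's book, which is the ZFC result extracting accessibility from closure conditions, requires closure under $\lambda$-\emph{pure subobjects} in addition to products and $\lambda$-filtered colimits, and purity is not implied by closure under limits ($\lambda$-pure monomorphisms are $\lambda$-filtered colimits of split monomorphisms in the arrow category, not limits, so a limit-closed $\mathscr{A}$ need not contain them). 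Worse, the accessibility of $\mathscr{A}$ is essentially \emph{equivalent} to the theorem being proved: once $\mathscr{A}$ is reflective, the reflections of the $\lambda$-presentable objects of $\mathscr{H}$ are $\lambda$-presentable in $\mathscr{A}$ and form a strong generator, so $\mathscr{A}$ is locally presentable --- hence invoking accessibility as a black box is circular. (Under Vop\v{e}nka's principle limit-closure alone gives reflectivity, but that is a large-cardinal assumption, not ZFC.)

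This is precisely why the cited paper exists: its contribution is to remove the purity hypothesis, and it does so not by first establishing accessibility but by your own fallback route --- verifying the solution-set condition of the General Adjoint Functor Theorem directly (the comma category $X \downarrow \mathscr{A}$ is complete, so a weakly initial set yields an initial object, i.e.\ a reflection) --- via a genuinely delicate transfinite factorization argument. Your sketch of that route also fails as stated: you propose to factor any $X \to A$ through ``a bounded set of morphisms out of $X$, the bound controlled by the presentability rank of $X$,'' but the $\mu$-presentable approximations of $A$ through which such a map would naively factor need not lie in $\mathscr{A}$, which by hypothesis has no prescribed small objects at all; producing factorizations \emph{inside} $\mathscr{A}$ of bounded complexity is the entire difficulty. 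So the proposal, as written, either reduces the theorem to itself (the accessibility route) or defers it to an unexecuted solution-set lemma (the GAFT route); the correct surrounding scaffolding does not close either gap.
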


From now, we assume that $\M$ is a symmetric monoidal model category that is also  combinatorial and left proper. In particular $\M$ is locally presentable. It's classical that any diagram category like $\Hom(\Phepiopl,\M)$ is also locally presentable (see \cite{Adamek-Rosicky-loc-pres}).\\ 

We want to apply Theorem \ref{adamek-refl} for some category $\mathscr{H}$, but to do this properly we consider the following data.
\begin{df}
Define $\Lax(\Phepiop,\M)_{NA}$ as the category of nonassociative premonoids.
\begin{itemize}[label=$-$]
\item An object $\F$ is given by the same data as a nonsymmetric normal lax functor $$\F: \Phepiop \to \M,$$ without the axiom of associativity. This means that we only have laxity maps $$\F(\n) \ox \F(\m) \to \F(\n + \m)$$ that are compatible with the structure map $\n \to \n'$ and the addition of $\Phepiop$. The later simply says that the following commutes.
\begin{equation}
\xy
(0,18)*+{\F(\n) \otimes \F(\m)}="W";
(0,0)*+{\F(\n') \otimes \F(\m') }="X";
(40,0)*+{\F(\n'+\m')}="Y";
(40,18)*+{\F(\n+\m)}="E";
{\ar@{->}^-{\varphi}"X";"Y"};
{\ar@{->}^-{}"W";"X"};
{\ar@{->}^-{\varphi}"W";"E"};
{\ar@{->}^-{}"E";"Y"};
\endxy
\end{equation}
\item A morphism $\sigma: \F \to \G$ is given by the same data as a morphism in $\coms$. This means that $\sigma$ is a natural transformation such that the following commutes.
\begin{equation}
\xy
(0,18)*+{\F(\n) \otimes \F(\m)}="W";
(0,0)*+{\G(\n) \otimes \G(\m) }="X";
(40,0)*+{\G(\n+\m)}="Y";
(40,18)*+{\F(\n+\m)}="E";
{\ar@{->}^-{\varphi}"X";"Y"};
{\ar@{->}^-{\sigma \otimes \sigma}"W";"X"};
{\ar@{->}^-{\varphi}"W";"E"};
{\ar@{->}^-{\sigma}"E";"Y"};
\endxy
\end{equation} 
\end{itemize}
\end{df}

\begin{note}
In the monoidal category $\M=(\ul{M}, \otimes, I)$, there is a tautological monoid structure on the unit $I$ that corresponds to the isomorphism $I \otimes I \cong I$. In particular we can regard $I$ with this trivial monoid structure as an object of $\coms$ and $\Lax(\Phepiop,\M)_{NA}$. We will still denote this monoid by $I$.
\end{note}

\begin{df}
Define the category of nonassociative pointed premonoids as the comma category $$(I \downarrow \Lax(\Phepiop,\M)_{NA}).$$ 
In particular we have a chain of forgetful functors:
\begin{equation}
\coms \to (I \downarrow \Lax(\Phepiop,\M)_{NA}) \to \Lax(\Phepiop,\M)_{NA} \to \Hom(\Phepiopl,\M)
\end{equation}
\end{df}
The next proposition allows us to reduce the existence of the left adjoint $\Gamma$ to the existence of a left adjoint to the last functor in the chain:
$$\Lax(\Phepiop,\M)_{NA} \to \Hom(\Phepiopl,\M).$$ 

\begin{prop}\label{prop-reduction}
Let $\M$ be a symmetric monoidal closed category whose underlying category is locally presentable for a sufficiently large regular cardinal $\kappa$. With the previous notation, the following hold. 
\begin{enumerate}
\item The forgetful functor $\coms \to (I \downarrow \Lax(\Phepiop,\M)_{NA})$ identifies $\coms$ with a full subcategory of $(I \downarrow \Lax(\Phepiop,\M)_{NA})$ that is closed under limits and filtered colimits.
\item If $(I \downarrow \Lax(\Phepiop,\M)_{NA})$ is locally presentable then there is a left adjoint to this (inclusion) functor and $\coms$ is also locally presentable. In that case, there is a left adjoint 
$$\Gamma: \Hom(\Phepiopl,\M) \to \coms.$$
\item If $\Lax(\Phepiop,\M)_{NA})$ is locally presentable then so is $(I \downarrow \Lax(\Phepiop,\M)_{NA})$. 
\end{enumerate}
\end{prop}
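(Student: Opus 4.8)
The plan is to prove the three parts in sequence, letting part (1) feed the reflection theorem in part (2), while part (3) is a formal stability property of local presentability.

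For part (1), I would first verify that the forgetful functor $\coms \to (I \downarrow \Lax(\Phepiop,\M)_{NA})$ is fully faithful. The conditions singling out a co-Segal commutative premonoid among pointed nonassociative premonoids---associativity of the laxity maps $\varphi$, compatibility with the symmetry of $\otimes$, and the weak-unitality diagram \eqref{diag-unitality}---are \emph{properties} cut out by commuting diagrams, not additional structure; and for an object of $\coms$ the point $I \to \F$ is forced to be $\varphi(e^{\otimes n})$ at level $\n$, so it is recovered from $e$ through the (now associative) laxity maps. Consequently, for two objects in the image, a morphism of pointed nonassociative premonoids is the same thing as a morphism of $\coms$ (the comma-category compatibility with the point reduces, via the laxity square, to compatibility with $e$), which gives fullness; faithfulness is immediate since every functor in the chain is an underlying-diagram functor. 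To see the image is closed under limits and filtered colimits, I would invoke Proposition \ref{closure-limit-filt}: in both categories these (co)limits are computed level-wise in $\M$. For a limit, the associativity, commutativity and unitality diagrams of the limiting object commute because they commute after projection to each factor. For a filtered colimit, I would use that $\M$ is symmetric monoidal \emph{closed}, so $\otimes$ commutes with filtered colimits; this identifies the structural squares of the colimit with the filtered colimit of the squares of the $\F_i$, and it also preserves normality, a filtered (hence connected) colimit of the constant diagram $I$ being $I$.

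For part (2), assume $(I \downarrow \Lax(\Phepiop,\M)_{NA})$ is locally presentable. By part (1), $\coms$ is then a full subcategory closed under limits and under all filtered colimits, in particular under $\alpha$-filtered colimits for every regular $\alpha$; Theorem \ref{adamek-refl} makes it reflective, so the inclusion has a left adjoint, and a reflective subcategory of a locally presentable category closed under filtered colimits is again locally presentable (\cite{Adamek-Rosicky-reflection,Adamek-Rosicky-loc-pres}). To produce $\Gamma$ I would not chase the chain of forgetful functors, but argue directly: by Proposition \ref{closure-limit-filt} the functor $\Ub \colon \coms \to \Hom(\Phepiopl,\M)$ preserves all small limits and all filtered colimits, both source and target are now known to be locally presentable, and an accessible limit-preserving functor between locally presentable categories admits a left adjoint (the adjoint functor theorem, \cite{Adamek-Rosicky-loc-pres}). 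That left adjoint is $\Gamma$.

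Part (3) is the standard stability of local presentability under coslicing: for any locally presentable $\C$ and object $X$, the comma category $(X \downarrow \C)$ is locally presentable (\cite{Adamek-Rosicky-loc-pres}); apply it with $\C = \Lax(\Phepiop,\M)_{NA}$ and $X = I$. I expect the main obstacle to lie in the closure under filtered colimits in part (1): one must check that the laxity-compatibility together with the associativity, commutativity and unitality equations genuinely survive the passage to the colimit, and this is precisely where the closedness of $\M$---equivalently, the commutation of $\otimes$ with filtered colimits---is indispensable, since otherwise the level-wise colimit would carry no premonoid structure at all. A secondary point of care is the clean separation of structure from property underlying full faithfulness, and the decision to obtain $\Gamma$ from the adjoint functor theorem rather than by composing three left adjoints, the last of which is not yet available at this stage.
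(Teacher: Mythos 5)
Your proposal is correct and follows the same skeleton as the paper's proof: full faithfulness from the definitions, closure under limits and filtered colimits via Proposition \ref{closure-limit-filt}, Theorem \ref{adamek-refl} applied to $\mathscr{H}=(I \downarrow \Lax(\Phepiop,\M)_{NA})$ to obtain the reflection, the adjoint functor theorem for locally presentable categories to produce $\Gamma$ from the limit- and filtered-colimit-preserving functor $\Ub$, and the standard stability of local presentability under coslicing for assertion (3). The one genuine divergence is in how local presentability of $\coms$ is extracted in assertion (2): the paper passes through Beck monadicity, identifies $\coms$ with the category of algebras for the induced finitary monad $\T$ on $(I \downarrow \Lax(\Phepiop,\M)_{NA})$, and cites the fact that categories of algebras for accessible monads on locally presentable categories are locally presentable; you instead invoke directly that a reflective, accessibly embedded (closed under filtered colimits) full subcategory of a locally presentable category is locally presentable. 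Both are standard consequences of the Ad\'amek--Rosick\'y machinery; your route is shorter for the proposition at hand, while the paper's monadicity detour is not wasted since the monadicity of the adjunction is reasserted later (Theorem \ref{exist-left-adj}) and used again for the transfer of model structures. You also supply details the paper leaves to the reader --- the structure-versus-property argument for full faithfulness, the determination of the point by its component at $\1$, and the role of closedness of $\otimes$ in making level-wise filtered colimits inherit the laxity maps --- all of which are accurate and welcome.
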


\begin{proof}
The fact that $\coms$ corresponds to a full subcategory of $(I \downarrow \Lax(\Phepiop,\M)_{NA})$ follows from the definition. The closure under limits and filtered colimits follows from Proposition \ref{closure-limit-filt} which says that these two operations are created in $\Hom(\Phepiopl,\M)$. This gives Assertion $(1)$.\\

For the second assertion it suffices to take $\mathscr{H}=(I \downarrow \Lax(\Phepiop,\M)_{NA})$ and apply the main theorem in \cite{Adamek-Rosicky-reflection} mentioned before. If this happens then the induced adjunction is monadic by Beck monadicity (see \cite{Adamek_Rosicky_Vitale}). Indeed, the forgetful functor reflects isomorphisms and creates (whence preserves) coequalizer of reflexive pairs (also Proposition \ref{closure-limit-filt}). Then $\coms$ is equivalent to the category $\T$-alg of $\T$-algebras of the induced monad $\T$.\\

Now $\T$ is a monad defined on the locally presentable category  $(I \downarrow \Lax(\Phepiop,\M)_{NA})$ and $\T$ also preserves filtered colimits. Then following \cite[Remark 2.78]{Adamek-Rosicky-loc-pres} the category $\T$-alg, whence $\coms$, is also locally presentable.\\

Finally the forgetful functor $\Ub: \coms \to \Hom(\Phepiopl,\M)$ is a functor between locally presentable categories that preserves limits and filtered colimits. The adjoint functor theorem for locally presentable categories, gives the existence of a left adjoint $\Gamma$. This gives the second assertion.\\

Assertion $(3)$ is clear since any comma category of a locally presentable category is also locally presentable (see \cite{Adamek-Rosicky-loc-pres}). 
\end{proof}

Finally we have our reduction given by the following result.
\begin{lem}\label{lem-reduc-adj-nass}
Let $\M$ be a symmetric monoidal closed category that is also locally presentable. Then there is a left adjoint to the forgetful functor 
$$\Lax(\Phepiop,\M)_{NA} \to \Hom(\Phepiopl,\M).$$

The induced adjunction is monadic and $\Lax(\Phepiop,\M)_{NA}$ is also locally presentable.
\end{lem}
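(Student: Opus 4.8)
The plan is to realize $\Lax(\Phepiop,\M)_{NA}$ as an \emph{inserter} of two accessible functors, deduce its local presentability from the standard stability results for accessible categories, and then extract the left adjoint and monadicity exactly as in the proof of Proposition \ref{prop-reduction}. First I would set $\mathcal{E}=\Hom(\Phepiopl,\M)$ and $\mathcal{E}'=\Hom(\Phepiopl\times\Phepiopl,\M)$; both are locally presentable because $\M$ is and diagram categories over a locally presentable category are locally presentable (\cite{Adamek-Rosicky-loc-pres}). Define $P,Q:\mathcal{E}\to\mathcal{E}'$ by $P(\F)(\n,\m)=\F(\n)\otimes\F(\m)$ and $Q(\F)(\n,\m)=\F(\n+\m)$, the latter being precomposition with the monoidal sum $+:\Phepiopl\times\Phepiopl\to\Phepiopl$. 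Unwinding the definition, a nonassociative premonoid is exactly an object $(\F,\varphi)$ of the inserter $\mathrm{Ins}(P,Q)$, where $\varphi:P\F\Rightarrow Q\F$ is a natural transformation whose naturality squares are precisely the compatibility squares of the definition (the normality data at $\0$ being fixed by the standing convention $\F(\0)=I$ and thus carrying no information over the $\Phepiopl$-diagram); and a morphism in $\Lax(\Phepiop,\M)_{NA}$ is exactly a map $\sigma:\F\to\G$ of $\mathcal{E}$ with $Q(\sigma)\circ\varphi_{\F}=\varphi_{\G}\circ P(\sigma)$. Under this identification the forgetful functor $\Ub$ is the canonical projection $\mathrm{Ins}(P,Q)\to\mathcal{E}$.

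Next I would verify that $P$ and $Q$ are accessible. The functor $Q$ is a precomposition, hence preserves all limits and colimits. For $P$, closedness of $\M$ makes $-\otimes-$ preserve colimits in each variable separately, and since filtered colimits are computed along a cofinal diagonal, $-\otimes-$ preserves filtered colimits jointly; as (co)limits in $\mathcal{E}$ and $\mathcal{E}'$ are objectwise, $P$ preserves filtered colimits. By the closure of accessible categories under inserters of accessible functors (\cite{Adamek-Rosicky-loc-pres}), $\mathrm{Ins}(P,Q)$ is accessible. It is moreover complete: the objectwise limit $L$ in $\mathcal{E}$ of a diagram of premonoids $\{\F_i\}$ acquires laxity maps by composing the canonical comparison $L(\n)\otimes L(\m)\to\lim_i\bigl(\F_i(\n)\otimes\F_i(\m)\bigr)$ with the limit of the $\varphi_i$, so that $\Ub$ creates limits. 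Since an accessible complete category is locally presentable (\cite{Adamek-Rosicky-loc-pres}), this already proves that $\Lax(\Phepiop,\M)_{NA}$ is locally presentable, and by the same objectwise argument $\Ub$ preserves limits and filtered colimits.

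Finally, both categories being locally presentable and $\Ub$ being continuous and accessible, the adjoint functor theorem for locally presentable categories produces the required left adjoint. Monadicity then follows from Beck's theorem, checked just as in Proposition \ref{prop-reduction}: $\Ub$ reflects isomorphisms and creates coequalizers of $\Ub$-split pairs. The point for the latter is that split coequalizers are absolute, hence preserved by each $\F(\n)\otimes-$ and thus by $P$, while $Q$ preserves all colimits; therefore the premonoid structure descends uniquely to the coequalizer formed objectwise in $\mathcal{E}$.

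The step I expect to be the genuine obstacle is the completeness half of local presentability. Because $\otimes$ is only a left adjoint it does not preserve limits, so it is not a priori obvious that limits of premonoids are objectwise. This is exactly what the comparison map above handles: the canonical arrow $(\lim_i A_i)\otimes(\lim_i B_i)\to\lim_i(A_i\otimes B_i)$ points in the favorable direction, so the objectwise limit still receives canonical laxity maps. The dual facts, joint preservation of filtered colimits (for accessibility of $P$) and of split coequalizers (for Beck monadicity), are the remaining technical inputs but are comparatively routine; a direct construction of the free premonoid via trees of tensor factors is possible but far messier, which is why I favour the inserter route.
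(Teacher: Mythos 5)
Your proof is correct, but it takes a genuinely different route from the paper's. The paper constructs the left adjoint \emph{explicitly}, by induction on the degree $\n$: it introduces a ``lax-latching object'' $L_\n^{\ox}\H$ (a colimit over a comma category of the Grothendieck construction of the addition bifunctor) and defines $\Gamma(\F)(\n)$ as a pushout of $L_\n\F \to \F(\n)$ along the comparison map $L_\n\F \to L_\n^{\ox}[\Gamma(\F)^{\leq \n-\1}]$; monadicity is then checked by Beck, and local presentability is deduced \emph{afterwards} from the fact that the induced monad is finitary (\cite[Remark 2.78]{Adamek-Rosicky-loc-pres}). You invert this logic: you first identify $\Lax(\Phepiop,\M)_{NA}$ with the inserter $\mathrm{Ins}(P,Q)$, get accessibility from the closure of accessible categories under inserters of accessible functors, get completeness from the creation of limits by the projection (your observation that the comparison map $(\lim A_i)\otimes(\lim B_i)\to\lim(A_i\otimes B_i)$ points in the favorable direction is exactly the right point, and is the same fact the paper uses silently in Proposition \ref{closure-limit-filt}), conclude local presentability, and only then extract the left adjoint from the adjoint functor theorem. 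The identification with the inserter is legitimate: since $\0$ is isolated in $\Phepi$ and the convention fixes $\F(\0)=I$ with canonical unit constraints, the only free structure is the natural transformation $P\F\Rightarrow Q\F$ over $\Phepiopl\times\Phepiopl$, and the morphism condition matches. What the paper's approach buys is an explicit formula for the free nonassociative premonoid, degree by degree, which is in the spirit of the Reedy-style analysis used elsewhere; what yours buys is brevity and the avoidance of the (nontrivial, and in the paper only sketched) verification that the inductive construction satisfies the universal property. Both arguments are consistent with the ``algebraic equations with colimit-distributing coefficients'' heuristic the paper itself invokes.
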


\begin{proof}
We postpone the proof to the Appendix. But the idea is that $\Lax(\Phepiop,\M)_{NA}$ is a category defined by algebraic equations with coefficients in a symmetric monoidal category $\M$ whose product $\otimes$ distributes over colimits.
\end{proof}
But for now we have a corollary that summarizes what we've just established. 

\begin{thm}\label{exist-left-adj}
Let $\M$ be a symmetric monoidal closed category that is also locally presentable. Then there is a left adjoint to the forgetful functor 
$$\coms \to \Hom(\Phepiopl,\M).$$

The induced adjunction is monadic and $\coms$ is also locally presentable.
\end{thm}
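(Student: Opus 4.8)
The plan is to obtain Theorem \ref{exist-left-adj} as a purely formal consequence of the reduction already carried out in Proposition \ref{prop-reduction}, fed by the base case supplied by Lemma \ref{lem-reduc-adj-nass}. In other words, all the genuine work has been isolated into the single nonassociative, unpointed layer $\Lax(\Phepiop,\M)_{NA} \to \Hom(\Phepiopl,\M)$, and the theorem should follow by propagating local presentability and the existence of left adjoints up the chain of forgetful functors
$$\coms \to (I \downarrow \Lax(\Phepiop,\M)_{NA}) \to \Lax(\Phepiop,\M)_{NA} \to \Hom(\Phepiopl,\M).$$

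Concretely, I would proceed in three steps. First, invoke Lemma \ref{lem-reduc-adj-nass}: since $\M$ is symmetric monoidal closed and locally presentable, the category $\Lax(\Phepiop,\M)_{NA}$ is locally presentable (and the bottom functor already has a monadic left adjoint there). Second, apply Proposition \ref{prop-reduction}(3): local presentability of $\Lax(\Phepiop,\M)_{NA}$ forces local presentability of the comma category $(I \downarrow \Lax(\Phepiop,\M)_{NA})$, every comma category over a locally presentable category being locally presentable. Third, apply Proposition \ref{prop-reduction}(2): with $(I \downarrow \Lax(\Phepiop,\M)_{NA})$ now known to be locally presentable, the inclusion of $\coms$ as a full subcategory closed under limits and filtered colimits (Proposition \ref{prop-reduction}(1), resting on Proposition \ref{closure-limit-filt}) admits a left adjoint by the Ad\'amek--Rosick\'y reflection theorem (Theorem \ref{adamek-refl}); hence $\coms$ is itself locally presentable, and since $\Ub$ then preserves limits and filtered colimits between locally presentable categories, the adjoint functor theorem for locally presentable categories yields the desired left adjoint $\Gamma: \Hom(\Phepiopl,\M) \to \coms$.

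For monadicity of the adjunction $\Gamma \dashv \Ub$, I would verify the hypotheses of Beck's theorem directly against Proposition \ref{closure-limit-filt}. The functor $\Ub$ reflects isomorphisms, since a morphism of premonoids whose underlying natural transformation is invertible is invertible level-wise and its level-wise inverse automatically respects the laxity maps; and by Proposition \ref{closure-limit-filt}(3), $\coms$ has, and $\Ub$ preserves, coequalizers of $\Ub$-split (reflexive) pairs, these being created in $\Hom(\Phepiopl,\M)$. Together with the left adjoint just produced, Beck monadicity applies. I expect no real obstacle at the level of this statement: the entire difficulty has been pushed into Lemma \ref{lem-reduc-adj-nass}, whose proof is deferred to the Appendix and which I am assuming here; the point of that lemma is precisely that $\Lax(\Phepiop,\M)_{NA}$ is cut out by algebraic equations whose coefficients live in $\M$ and for which $\otimes$ distributes over colimits, so that the standard ``category of models of an essentially algebraic theory'' arguments deliver both the left adjoint and local presentability at the bottom of the tower.
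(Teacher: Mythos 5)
Your proposal is correct and follows exactly the paper's route: the paper's own proof of Theorem \ref{exist-left-adj} is literally ``combine Proposition \ref{prop-reduction} and Lemma \ref{lem-reduc-adj-nass}'', and your three steps (plus the Beck monadicity check, which the paper houses inside the proof of Proposition \ref{prop-reduction}) are just that combination spelled out.
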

\begin{proof}
Just combine Proposition \ref{prop-reduction} and Lemma \ref{lem-reduc-adj-nass}.
\end{proof}

\section{Transferred model structure from $\M$ to $\coms$}\label{sec-transf}
The main purpose of this section is to show that there is a special model structure on $\coms$ in which a map $\sigma: \F \to \G$ is a weak equivalence (res. fibration) if the initial component $\F(\1) \to \G(\1)$ is a weak equivalence (resp. fibration) in $\M$.\\

The material of the preceding section gives us two choices to get this model structure. The first method is to put a model structure on  $\Hom(\Phepiopl,\M)$ and then transfer the model structure to $\coms$ using the well known lemma of \cite{Sch-Sh-Algebra-module} through the monadic adjunction:
 $$\Ub:\coms \leftrightarrows \Hom(\Phepiopl,\M):\Gamma.$$
 
 But this will be too long, so instead we will work directly with $\M$ using the fact that we have a left adjoint to the evaluation at $\1$:
 $$ \M \xrightarrow{\Fb^{\1}} \Hom(\Phepiopl,\M) \xrightarrow{\Gamma} \coms.$$ 
In this setting we will use the theory of \emph{right-induced} model structure (see \cite[Theorem 11.3.2]{Hirsch-model-loc}). Before doing this we need to outline some facts about the left adjoint $\Gamma \Fb^{\1}$.
\subsection{Properties of the left adjoint}

Let $\M=(\ul{M}, \ox, I)$ be as before and let $I \downarrow \M$ be the under category. Observe that we have a factorization of the evaluation at $\1$ as:
$$ \coms \xrightarrow{} (I \downarrow \M) \to \M.$$

The projection $(I \downarrow \M) \to \M$ has a left adjoint that takes $m \in \M$ to the canonical map $I \hookrightarrow (I \sqcup m)$ going to the coproduct:
 $$(I \hookrightarrow I \sqcup - ):\M \to (I \downarrow \M).$$ 
 In particular it's a left Quillen functor if we put on $(I \downarrow \M)$ the under model structure (see \cite{Hirsch-model-loc}, \cite{Hov-model}).
The following result is very important for the upcoming sections. It shows among other things the differences between co-Segal commutative premonoids and usual commutative monoids.

\begin{prop}\label{property-of-adj-m-to-coms}
Let $\M$ a symmetric monoidal closed category, that is also locally presentable. Then the following hold. 
\begin{enumerate}
\item The functor $\Ub: \coms \to (I \downarrow \M)$ is a left adjoint. That is, there is a functor $taut_u: (I \downarrow \M) \to \coms$ that is right adjoint to $\Ub$. 
\item The functor $\Ub: \coms \to (I \downarrow \M)$ preserves colimits, in particular it preserves pushouts.
\item The composite of left adjoints $\M \xrightarrow{\Gamma\Fb^{1}} \coms \xrightarrow{\Ub} (I \downarrow \M)$ is a left adjoint to the projection $(I \downarrow \M) \to \M$. By uniqueness of the adjoint this composite is isomorphic to the functor 
$$(I \hookrightarrow I \sqcup - ):\M \to (I \downarrow \M).$$
\item The composite $\M \xrightarrow{\Gamma\Fb^{1}} \coms \xrightarrow{\Ub} (I \downarrow \M) \to \M$ is isomorphic to the functor 
$$(I \sqcup -): \M \to \M.$$
\item Assume that $\M$ is also a model category. Let $f: U \to V$ be a (trivial) cofibration in $\M$ and let $\Gamma\Fb^{1}_{f}: \Gamma\Fb^{1}_{U} \to \Gamma\Fb^{1}_{V}$ be it's image in $\coms$. Then the component of $\Gamma\Fb^{1}_{f}$ at $\1$ is isomorphic to the coproduct of $\Id_I$ and $f$:
$$\Id_I \sqcup f: I \sqcup U \to I \sqcup V.$$
In particular it's a (trivial) cofibration in $\M$.
\end{enumerate}
\end{prop}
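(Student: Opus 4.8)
The plan is to reduce the whole proposition to an explicit construction of the right adjoint $taut_u$ in Assertion $(1)$. Once $\Ub\colon\coms\to(I\downarrow\M)$ is exhibited as a \emph{left} adjoint, Assertion $(2)$ is automatic, and Assertions $(3)$–$(5)$ follow by formal manipulation of the adjunctions already available, together with the stability of cofibrations under cobase change.

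First I would build $taut_u$ by hand. Since $\M$ is locally presentable it is complete, so it has a terminal object $\ast$. Given $(I\xrightarrow{h}m)$ in $(I\downarrow\M)$, I define a premonoid $\G=taut_u(I\xrightarrow{h}m)$ by
\[
\G(\0)=I,\qquad \G(\1)=m,\qquad \G(\n)=\ast \ \ (\n\geq\2),
\]
with unit $e^{\G}=h\colon I\to\G(\1)$, with the normality isomorphisms $\G(\0)\otimes\G(\n)\cong\G(\n)$ at levels $\0,\1$, and with every remaining structure map and every laxity map $\G(\n)\otimes\G(\m)\to\G(\n+\m)$ equal to the canonical map to $\ast$ (note $\n+\m\geq\2$ as soon as $\n,\m\geq\1$). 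Because every coherence diagram living in degree $\geq\2$ has terminal target, associativity, symmetry and the weak-unit diagram \eqref{diag-unitality} commute for free, so $\G$ is a genuine object of $\coms$. The universal property is the key point: a morphism $\sigma\colon\F\to\G$ in $\coms$ has every component $\sigma_\n$ with $\n\geq\2$ forced to be the unique map to $\ast$, and no naturality or laxity square constrains $\sigma_\1$ beyond the unit triangle $\sigma_\1 e^{\F}=h$ (the only morphisms and laxity maps of $\Phepiopl$ with target in degree $\1$ are the identity and the normality isomorphisms). Hence
\[
\Hom_{\coms}(\F,taut_u(I\xrightarrow{h}m))\cong\{\sigma_\1\colon\F(\1)\to m\ \mid\ \sigma_\1 e^{\F}=h\}=\Hom_{(I\downarrow\M)}(\Ub\F,(I\xrightarrow{h}m)),
\]
naturally in $\F$ and in $(I\to m)$. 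This gives $\Ub\dashv taut_u$ and, by construction, $\Ub\circ taut_u=\Id$, so the counit is the identity. The main obstacle is exactly verifying that this terminal-object-in-high-degrees recipe is a legitimate weakly unital normal lax functor; this is where the asymmetry between co-Segal and strict monoids is used.

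With $(1)$ in hand, Assertion $(2)$ is immediate, since a left adjoint preserves all colimits, in particular pushouts. For Assertion $(3)$, let $p\colon(I\downarrow\M)\to\M$ be the projection, whose left adjoint is $(I\hookrightarrow I\sqcup-)$, and recall $\Gamma\Fb^{\1}$ is left adjoint to $\Ev_\1=p\circ\Ub$. Writing $g$ for the object $(I\xrightarrow{g}n)$ and chaining $\Ub\dashv taut_u$ with $\Gamma\Fb^{\1}\dashv p\,\Ub$, while using $\Ub\,taut_u=\Id$, gives for every $m\in\M$
\[
\Hom_{(I\downarrow\M)}(\Ub\Gamma\Fb^{\1}m,g)\cong\Hom_{\coms}(\Gamma\Fb^{\1}m,taut_u(g))\cong\Hom_{\M}(m,p\,\Ub\,taut_u(g))=\Hom_{\M}(m,n),
\]
naturally; thus $\Ub\Gamma\Fb^{\1}$ is left adjoint to $p$, and by uniqueness of adjoints $\Ub\Gamma\Fb^{\1}\cong(I\hookrightarrow I\sqcup-)$. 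Assertion $(4)$ then follows by postcomposing with $p$, since $p\circ(I\hookrightarrow I\sqcup-)=(I\sqcup-)$.

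Finally, for Assertion $(5)$, applying $\Ev_\1=p\,\Ub$ to $\Gamma\Fb^{\1}_f$ and using the natural isomorphism of $(4)$ identifies the component at $\1$ with $\Id_I\sqcup f\colon I\sqcup U\to I\sqcup V$. This map is the cobase change of $f$ along the coproduct inclusion $U\hookrightarrow I\sqcup U$, because $I\sqcup V\cong(I\sqcup U)\sqcup_U V$; since $\M$ is a model category and (trivial) cofibrations are stable under cobase change, $\Id_I\sqcup f$ is a (trivial) cofibration whenever $f$ is. I expect all of the real work to sit in the second paragraph: once $taut_u$ is correctly identified, the remaining assertions are purely formal.
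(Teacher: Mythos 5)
Your proof is correct and follows essentially the same route as the paper: everything is reduced to exhibiting the right adjoint $taut_u$, and your hand-built premonoid (with $m$ in degree $\1$ and the terminal object $\ast$ in all higher degrees) is exactly the object $h^\star[\ast]$ that the paper obtains by applying Scholium \ref{schol1} to the terminal commutative monoid along the unique map $m\to\ast$. Your explicit verification of the hom-set bijection, which the paper leaves to the reader, is accurate, and the remaining assertions are handled the same way in both arguments.
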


\begin{proof}
Each assertion follows one after the other and we remember that left adjoints preserve all kind of colimits. Therefore it suffices to show that there is a right adjoint $taut_u$. For this let $\ast$ be the terminal object of $\M$.\\

Observe that there is a unique commutative monoid structure on $\ast$, where the multiplication $\ast \otimes \ast \to \ast$ is the unique one. Let's denote this commutative monoid by $[\ast]$ and let's regard it as an object of $\coms$ through the inclusion 
$\com \hookrightarrow \coms$.\\

Let $f:I \to m$ be any object of $(I \downarrow \M)$  and let $h: m \to \ast$ be the unique map in $\M$. Define $taut_u(f)=h^\star [\ast]$ as the object of $\coms$ obtained with the construction described in Scholium \ref{schol1} applied to $[\ast]$ with respect to the map $h: m \to \ast$.\\

We leave the reader to check that this is indeed a right adjoint to 
$$\coms \to (I \downarrow \M).$$
\end{proof}

As a corollary  we get the following lemma.
\begin{lem}\label{lem-important-pushout}
Let $\M$ be a symmetric monoidal model category that is combinatorial and left proper. 
\begin{enumerate}
\item  The functor $(I \downarrow \M) \to \M$ creates (hence preserves) pushouts. 
\item The functor $\Ev_1: \coms \to (I \downarrow \M) \to \M$ preserves pushouts. In particular pushouts in $\coms$ are taken level-wise at the value of $\1$.
\item Let $f:U \to V$ be a morphism in $\M$ and let $D$ be a pushout square in $\coms$:
 \[
 \xy
(0,18)*+{\Gamma\Fb^{1}_{U}}="W";
(0,0)*+{\Gamma\Fb^{1}_{V}}="X";
(30,0)*+{\Ba}="Y";
(30,18)*+{\Aa}="E";
{\ar@{->}^-{\ol{\sigma}}"X";"Y"};
{\ar@{->}^-{\Gamma\Fb^{1}_{f}}"W";"X"};
{\ar@{->}^-{\sigma}"W";"E"};
{\ar@{->}^-{\varepsilon}"E";"Y"};
\endxy
\]
Then the image of $D$ by the evaluation at $\1$ is the pushout square in $\M$:
 \[
 \xy
(0,18)*+{I \sqcup U}="W";
(0,0)*+{I \sqcup V}="X";
(30,0)*+{\Ba(\1)}="Y";
(30,18)*+{\Aa(\1)}="E";
{\ar@{->}^-{\ol{\sigma}_{\1}}"X";"Y"};
{\ar@{->}^-{\Id_I \sqcup f}"W";"X"};
{\ar@{->}^-{\sigma_{\1}}"W";"E"};
{\ar@{->}^-{\varepsilon}"E";"Y"};
\endxy
\]
In particular:
\begin{itemize}[label=$-$]
\item The map $\varepsilon_{\1}: \Aa(\1) \to \Ba(\1)$ is a trivial cofibration if $f$ is;
\item Since $\M$ is left proper, the map $\ol{\sigma}_{\1}$ is a weak equivalence if $f$ is a cofibration and if $\sigma_\1$ is a weak equivalence. 
\end{itemize}
\end{enumerate}
\end{lem}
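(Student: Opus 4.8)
The plan is to deduce all three assertions from the structural results about the left adjoint $\Gamma\Fb^{1}$ recorded in Proposition \ref{property-of-adj-m-to-coms}, together with the elementary behaviour of comma categories and left properness. For assertion $(1)$, I would recall that the projection $(I \downarrow \M) \to \M$ is the forgetful functor from an under-category. For such a functor, a cocone in $(I \downarrow \M)$ is a colimit precisely when its image in $\M$ is a colimit and the canonical maps from $I$ are compatible; concretely, the pushout of a span in $(I \downarrow \M)$ is computed by taking the pushout of the underlying span in $\M$ and equipping it with the evident map out of $I$. Thus the projection creates pushouts, which is exactly what is claimed.

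For assertion $(2)$, I would factor $\Ev_\1$ as the composite $\coms \xrightarrow{\Ub} (I\downarrow \M) \to \M$, exactly as set up just before the statement. By Proposition \ref{property-of-adj-m-to-coms}$(2)$, the functor $\Ub$ is a left adjoint, hence preserves all colimits and in particular pushouts; and by part $(1)$ of the present lemma the projection $(I\downarrow \M)\to \M$ preserves (indeed creates) pushouts. Composing two pushout-preserving functors gives that $\Ev_\1$ preserves pushouts, which is the content of $(2)$. The phrase ``pushouts in $\coms$ are taken level-wise at the value of $\1$'' is then just a restatement of this preservation property.

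For assertion $(3)$, I would apply $\Ev_\1$ to the given pushout square $D$ in $\coms$. By $(2)$ the resulting square in $\M$ is again a pushout. It remains only to identify its corners and its left-hand arrow. Here I invoke Proposition \ref{property-of-adj-m-to-coms}$(4)$, which says the composite $\Ev_\1 \circ \Gamma\Fb^{1}$ is isomorphic to $I \sqcup -$, so the top-left and bottom-left corners are $I \sqcup U$ and $I \sqcup V$; and Proposition \ref{property-of-adj-m-to-coms}$(5)$, which identifies the left vertical arrow $\Ev_\1(\Gamma\Fb^{1}_f)$ with $\Id_I \sqcup f$. The remaining corners $\Aa(\1)$ and $\Ba(\1)$ and the two arrows out of them are defined by evaluation, so the square is exactly the one displayed.

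Finally, for the two bulleted consequences I would argue as follows. Since $f$ is a cofibration, so is $\Id_I \sqcup f$ (this is part $(5)$ of Proposition \ref{property-of-adj-m-to-coms}, and more basically cofibrations are stable under coproduct with an identity); cofibrations and trivial cofibrations are stable under cobase change in the model category $\M$, so the pushout arrow $\varepsilon_\1$ is a (trivial) cofibration whenever $f$ is. For the second bullet, when $f$ is a cofibration, $\Id_I \sqcup f$ is again a cofibration along which we are pushing out, and $\sigma_\1$ is a weak equivalence; left properness of $\M$ is precisely the statement that the pushout of a weak equivalence along a cofibration is a weak equivalence, so $\ol{\sigma}_\1$ is a weak equivalence. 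I expect the only genuinely substantive input to be the correct identification of the square in $(3)$ via the two isomorphisms from Proposition \ref{property-of-adj-m-to-coms}; everything else is a direct appeal to left properness and the stability of (trivial) cofibrations under cobase change.
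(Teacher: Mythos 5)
Your proposal is correct and follows exactly the route the paper intends: its proof of this lemma is simply ``Clear,'' leaving the reader to assemble the pieces from Proposition \ref{property-of-adj-m-to-coms} (that $\Ub$ is a left adjoint, that $\Ev_\1\circ\Gamma\Fb^{1}\cong I\sqcup -$, and that $(\Gamma\Fb^{1}_f)_\1\cong\Id_I\sqcup f$) together with the creation of connected colimits by the coslice projection, cobase change, and left properness. You have filled in precisely those details.
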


\begin{proof}
Clear.
\end{proof}

\subsection{The easy homotopy theory on $\coms$}
The material of the preceding sections was a preparation for our first theorem. A direct application of \cite[Theorem 11.3.2]{Hirsch-model-loc}  provides a right-induced model structure on $\coms$ through the adjunction 
$$\Ev_\1: \coms \leftrightarrows \M: \Gamma \Fb^{\1}.$$

\begin{thm}\label{easy-model-coms}
Let $\M=(\ul{M}, \otimes, I)$ be symmetric monoidal model category that is also combinatorial and left proper. Let $\I$ and $\Ja$ be respectively the set of generating cofibrations and the set of generating trivial cofibrations. 
\begin{enumerate}
\item Then there is a combinatorial and left proper model structure on $\coms$ which may be described as follows.
\begin{itemize}[label=$-$]
\item A map $\sigma :\F \to \G$ is a weak equivalence (resp. fibration) if $$\Ev_\1(\sigma): \F(\1) \to \G(\1),$$ is a weak equivalence (resp. fibration) in $\M$.
\item A map  $\sigma :\F \to \G$ is a cofibration if it has the LLP against any map that is a weak equivalence and a fibration. 
\end{itemize}
\item This model structure is also left proper.
\item The set $\Gamma\Fb^{\1}(\I)$ is a set of generating cofibrations and the set $\Gamma\Fb^{\1}(\Ja)$ is a set of generating  trivial cofibrations.

\item We will denote by $\comse$ this model category. The  adjunction
$$ \Ev_\1: \comse \rightleftarrows  \M: \Gamma\Fb^{\1}$$
is a Quillen adjunction where $\Gamma\Fb^{\1}$ is left Quillen and $\Ev_\1$ is right Quillen.
\end{enumerate} 
\end{thm}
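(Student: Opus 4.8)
The plan is to realize this model structure as a \emph{right-induced} (transferred) structure along the adjunction $\Ev_\1: \coms \rightleftarrows \M: \Gamma\Fb^{\1}$, applying \cite[Theorem 11.3.2]{Hirsch-model-loc} with $\Gamma\Fb^{\1}$ as the left adjoint and $\Ev_\1$ as the right adjoint that \emph{creates} the weak equivalences and fibrations. I would take $\Gamma\Fb^{\1}(\I)$ and $\Gamma\Fb^{\1}(\Ja)$ as the candidate generating cofibrations and generating trivial cofibrations. By Theorem \ref{exist-left-adj} the category $\coms$ is locally presentable, hence complete, cocomplete, and with all objects small; so the smallness needed to run the small object argument on $\Gamma\Fb^{\1}(\I)$ and $\Gamma\Fb^{\1}(\Ja)$ holds automatically, and combinatoriality of the resulting structure will be immediate.

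The one genuinely nontrivial hypothesis of the transfer theorem is the \emph{acyclicity condition}: every relative $\Gamma\Fb^{\1}(\Ja)$-cell complex must be sent by $\Ev_\1$ to a weak equivalence of $\M$. First I would record that $\Ev_\1$ preserves the relevant colimits: by Proposition \ref{property-of-adj-m-to-coms} it factors as $\Ub$ (a left adjoint, hence cocontinuous) followed by the projection $(I \downarrow \M) \to \M$, which preserves pushouts and filtered colimits, so $\Ev_\1$ preserves pushouts and transfinite compositions. It then suffices to treat a single elementary stage. Since $\Gamma\Fb^{\1}$ is a left adjoint, a pushout of a coproduct of maps in $\Gamma\Fb^{\1}(\Ja)$ is the pushout of $\Gamma\Fb^{\1}_{f}$ for $f=\coprod_i f_i$ a coproduct of trivial cofibrations of $\M$, which is again a trivial cofibration. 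By Lemma \ref{lem-important-pushout}(3), the image of such a pushout under $\Ev_\1$ is the pushout of $\Id_I \sqcup f$ along $\sigma_\1$, and the resulting leg $\varepsilon_\1$ is a trivial cofibration. A transfinite composite of trivial cofibrations being a trivial cofibration, $\Ev_\1$ carries the whole relative cell complex to a trivial cofibration, in particular to a weak equivalence. This establishes acyclicity and yields assertions (1) and (3) at once.

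For left properness (assertion (2)), I would first upgrade the cell-complex computation to show that $\Ev_\1$ sends \emph{every} cofibration of $\coms$ to a cofibration of $\M$: rerunning the argument with $\I$ in place of $\Ja$, using that $\Id_I\sqcup f$ is a cofibration whenever $f$ is and that cofibrations are closed under pushout, transfinite composition, and retract. Then, given a pushout in $\coms$ of a weak equivalence along a cofibration, I apply $\Ev_\1$: it preserves the pushout, producing a pushout in $\M$ of a weak equivalence along a cofibration, and left properness of $\M$ gives that the image of the remaining leg is a weak equivalence, i.e. exactly that the corresponding map of $\coms$ is a weak equivalence. Hence $\comse$ is left proper.

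Assertion (4) is then formal: in a right-induced structure the right adjoint creates fibrations and weak equivalences, so $\Ev_\1$ preserves fibrations and trivial fibrations tautologically and is right Quillen (equivalently, $\Gamma\Fb^{\1}$ sends $\I$ and $\Ja$ to the generating sets $\Gamma\Fb^{\1}(\I)$ and $\Gamma\Fb^{\1}(\Ja)$ and is thus left Quillen). The main obstacle in the whole scheme is the acyclicity condition, but the essential work there has already been discharged by Lemma \ref{lem-important-pushout}; the crucial input is that $\Ev_\1\circ\Gamma\Fb^{\1}$ computes to $I\sqcup(-)$ and that taking the coproduct with $I$ preserves (trivial) cofibrations, after which everything reduces to routine transfer machinery.
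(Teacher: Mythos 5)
Your proposal is correct and follows essentially the same route as the paper, whose entire proof is the one-line citation of Lemma \ref{lem-important-pushout} together with the transfer theorem \cite[Theorem 11.3.2]{Hirsch-model-loc}; you have simply spelled out the acyclicity check, the smallness via local presentability, and the left-properness argument that the paper leaves implicit. In particular your observation that left properness requires upgrading Lemma \ref{lem-important-pushout} from generating cofibrations to all cofibrations (via retracts of relative cell complexes) is exactly the content the paper later records as Lemma \ref{lem-important-pushout-general}.
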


\begin{proof}
Follows from Lemma \ref{lem-important-pushout} and \cite[Theorem 11.3.2]{Hirsch-model-loc}. Finally $\coms$ is also locally presentable.
\end{proof}

\begin{term}
A weak equivalence in $\comse$ will be called \emph{easy weak equivalence}. This is the reason we included the letter `e' as a subscript.
\end{term}
We have a similar result for the category $\Hom(\Phepiopl, \M)$ and the adjunction 
$$\Ev_\1: \Hom(\Phepiopl, \M) \leftrightarrows \M: \Fb^{\1}.$$
We include it here for completeness.
\begin{thm}\label{easy-model-grahpcoms}
Let $\M=(\ul{M}, \otimes, I)$ be symmetric monoidal model category that is also combinatorial and left proper. Let $\I$ and $\Ja$ be respectively the set of generating cofibrations and the set of generating trivial cofibrations. 
\begin{enumerate}
\item Then there is a combinatorial and left proper model structure on $\Hom(\Phepiopl, \M)$ which may be described as follows.
\begin{itemize}[label=$-$]
\item A map $\sigma :\F \to \G$ is a weak equivalence (resp. fibration) if $$\Ev_\1(\sigma): \F(\1) \to \G(\1),$$ is a weak equivalence (resp. fibration) in $\M$.
\item A map  $\sigma :\F \to \G$ is a cofibration if it has the LLP against any map that is a weak equivalence and a fibration. 
\end{itemize}
\item This model structure is also left proper.

\item The sets $\Fb^{\1}(\I)$ is a set of generating cofibrations and the set $\Fb^{\1}(\Ja)$ is a set of generating  trivial cofibrations.

\item We will denote by $\Hom(\Phepiopl, \M)_e$ this model category. The  adjunction
$$ \Ev_\1: \Hom(\Phepiopl, \M) \rightleftarrows  \M: \Fb^{\1}$$
is a Quillen adjunction where $\Fb^{\1}$ is left Quillen and $\Ev_\1$ is right Quillen.
\item We have a chain of Quillen adjunctions, in which $\Gamma$ is left Quillen.
$$\coms \leftrightarrows \Hom(\Phepiopl, \M) \rightleftarrows  \M.$$
\end{enumerate} 
\end{thm}
\subsection{First Quillen adjunction between strict and weak monoids}
We isolated here the comparison between $\com$ and $\coms$. On $\com$ we will consider the model structure where weak equivalences and fibrations are  such maps between the underlying objects in $\M$ (see \cite{White_com}). We will refer to this model structure as the \emph{natural} model structure. 
\begin{thm}
Let $\M=(\ul{M}, \otimes, I)$ be symmetric monoidal closed category that is also locally presentable. Then the following hold. 
\begin{enumerate}
\item The inclusion $\iota: \com \hookrightarrow \coms$ exhibits $\com$ as a full subcategory closed under limits and filtered colimits. In particular there is a left adjoint (reflection) to the inclusion $\iota$:
$$|-|: \coms \to \com.$$

\item Assume that $\M$ is a combinatorial monoidal model category. If the natural model structure on $\com$ exists then we have a Quillen adjunction
$$ |-|: \comse \leftrightarrows \com : \iota,$$
 in which $\iota: \com \to \comse$ is right Quillen. 
\end{enumerate}
\end{thm}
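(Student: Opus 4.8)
The plan is to treat the two assertions separately, deriving the first from the abstract reflection theorem already recalled and the second from a direct comparison of the two model structures.

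For assertion $(1)$, the first step is to recall from the earlier Proposition characterizing strict monoids as constant diagrams that $\iota$ is fully faithful and that its essential image consists exactly of those $\F \in \coms$ whose underlying diagram $\F : \Phepiopl \to \M$ is \emph{constant}. It then suffices to show that this full subcategory is closed under limits and under filtered colimits in $\coms$. By Proposition \ref{closure-limit-filt} both operations are computed level-wise in $\Hom(\Phepiopl,\M)$, hence pointwise in $\M$, and $\Ub$ preserves them. I would then observe that a pointwise limit (resp. filtered colimit) of constant diagrams is again constant: for any structure map $\alpha$ of $\Phepiopl$ the transition map of the limit (resp. colimit) is a limit (resp. colimit) of identity maps, hence an identity. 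Moreover the laxity data of the limit/colimit reduce to the multiplication of a strict commutative monoid, the associativity and commutativity axioms being equational and therefore inherited from the pointwise computation in $\M$. Thus $\com$ is closed under limits and filtered colimits in $\coms$.

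Since $\coms$ is locally presentable by Theorem \ref{exist-left-adj}, the Ad\'amek--Rosick\'y reflection theorem recalled in Theorem \ref{adamek-refl} applies with $\mathscr{H}=\coms$ and yields that $\com$ is reflective; the reflection is the desired left adjoint $|-| : \coms \to \com$, which settles $(1)$.

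For assertion $(2)$, the key observation is the identity $\Ev_\1 \circ \iota = \Ub$ of functors $\com \to \M$: the constant diagram $\iota(M)$ has value the underlying object $\Ub(M)$ at every $\n$, so evaluating at $\1$ returns $\Ub(M)$. Now by Theorem \ref{easy-model-coms} a map in $\comse$ is a weak equivalence (resp. fibration) precisely when its image under $\Ev_\1$ is one in $\M$, whereas in the natural model structure on $\com$ a map is a weak equivalence (resp. fibration) precisely when its image under $\Ub$ is one in $\M$. Combined with $\Ev_\1 \circ \iota = \Ub$, this shows that $\iota$ both preserves and reflects weak equivalences and fibrations, and hence also trivial fibrations. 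In particular $\iota$ preserves fibrations and trivial fibrations, so it is right Quillen and $|-| \dashv \iota$ is a Quillen adjunction.

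Most of the genuine content sits in the closure argument of $(1)$; the delicate point there is to confirm that a filtered colimit of constant diagrams carrying strict monoid structures is again a strict commutative monoid and not merely a premonoid, which is exactly what guarantees that the reflection lands inside $\com$. By contrast, assertion $(2)$ is essentially formal once the identity $\Ev_\1 \circ \iota = \Ub$ is available, since both model structures are defined by transport along a functor to $\M$ and the two transporting functors agree along $\iota$.
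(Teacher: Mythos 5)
Your proposal is correct and follows essentially the same route as the paper: closure of $\com$ under limits and filtered colimits in $\coms$ (both being computed level-wise in $\M$), followed by the Ad\'amek--Rosick\'y reflection theorem for assertion $(1)$, and the observation that $\iota$ preserves fibrations and trivial fibrations (via $\Ev_\1 \circ \iota = \Ub$) for assertion $(2)$. You simply spell out in more detail the steps the paper declares ``clear,'' in particular the verification that a level-wise limit or filtered colimit of constant diagrams is again a constant diagram carrying a strict commutative monoid structure.
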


\begin{proof}
The first assertion is clear since limits and filtered colimits in $\com$ and $\coms$ are computed in $\M$. Then it suffices to apply Theorem \ref{adamek-refl} or alternatively the adjoint theorem for locally presentable categories.\\
 
The second assertion is clear since $\iota$ preserves (and reflects) fibrations and trivial fibrations (as well as weak equivalences).
\end{proof}

\subsection{Left adjoint from $\Hom(\Phepiopl, \M)$ to $\coms$}
We saw in Proposition \ref{property-of-adj-m-to-coms} that the composite  of left adjoints $$\M \xrightarrow{\Gamma\Fb^{1}} \coms \xrightarrow{\Ub} (I \downarrow \M) \to \M$$ is isomorphic to the functor 
$$(I \sqcup -): \M \to \M.$$

This result implicitly informs us that given $\F \in \Hom(\Phepiopl, \M)$, then $\Gamma (\F)(\1)$ is simply the coproduct $ I \sqcup \F(\1)$. This is the major difference between classical commutative monoids and co-Segal monoids. For classical monoids, the free monoids structure is concentrated in one term, whereas here, the free monoid is built (slowly) within the terms $\Gamma (\F)(\n)$ of higher degree.\\

 We  confirm this `officially' with the next result but for the moment we need to outline some basic facts.

\begin{rmk}
\begin{enumerate}

\item Let $\uc$ be the unit category and let $i_1: \uc \to \Phepiopl$ be the functor that selects the object $\1 \in \Phepiopl$. Then the reader can easily see that:
\begin{itemize}[label=$-$]
\item The evaluation functor $\Ev_\1: \Hom(\Phepiopl, \M) \to \M$ is nothing but the pullback functor 
$$ i_\1^{\star}: \Hom(\Phepiopl, \M) \to \Hom(\uc, \M) \simeq \M .$$
\item And this pullback functor has  a left adjoint given by the left Kan extension $Lan_{i_\1}$,  as well as a right adjoint given by the right Kan extension $Ran_{i_\1}$ (see \cite{Mac}).
\item With a little thinking it's not hard to see that, given $m \in \M$, $Ran_{i_\1}(m)$ is the underlying diagram of the right adjoint $taut_u(m)$ mentioned in Proposition \ref{property-of-adj-m-to-coms}.
\end{itemize}
\item It turns out that $\Ev_\1: \Hom(\Phepiopl, \M) \to \M$ is simultaneously a left adjoint and a right adjoint. 
\item Using the left adjoint $\M \to (I \downarrow \M)$ we get a left adjoint by composition
$$\Hom(\Phepiopl, \M) \xrightarrow{\Ev_\1} \M \xrightarrow{ [I \to (I \sqcup -)]}(I \downarrow \M).$$
As usual left adjoints preserve all kind of colimits, so this functor certainly preserves pushout. 
\end{enumerate}
\end{rmk} 

A direct consequence of this remark is the Proposition below. It will play an important role in a moment. 

\begin{prop}
In a monoidal category $\M$ satisfying the previous hypotheses, the following hold.
\begin{enumerate}
\item The  composite of left adjoints
$$ \Hom(\Phepiopl, \M) \xrightarrow{\Gamma} \coms \xrightarrow{\Ev_{\1}} (I \downarrow \M),$$ is isomorphic to the other composite of left adjoints:
$$\Hom(\Phepiopl, \M) \xrightarrow{\Ev_\1} \M \xrightarrow{ [I \to (I \sqcup -)]}(I \downarrow \M).$$
\item Let $\sigma: \F \to \G$ be a natural transformation in $\Hom(\Phepiopl, \M)$ and let $\Gamma(\sigma)$ be its image in $\coms$. Then the component of  $\Gamma(\sigma)$ at $\1$ is isomorphic to the coproduct:
$$\Id_I \sqcup \sigma_\1: I \sqcup \F(\1) \to I \sqcup \G(\1).$$
In particular $\Gamma(\sigma)_\1$ is a (trivial) cofibration if $\sigma_\1$ is a (trivial) cofibration.
\end{enumerate}
\end{prop}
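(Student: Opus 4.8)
The plan is to deduce everything from the formal fact that a left adjoint is determined up to canonical isomorphism by its right adjoint. Both displayed composites are composites of left adjoints: the first is $\Ev_\1 \circ \Gamma$, where $\Gamma$ is left adjoint to the forgetful functor $\Ub\colon \coms \to \Hom(\Phepiopl,\M)$ and $\Ev_\1\colon \coms \to (I \downarrow \M)$ (the functor called $\Ub$ in Proposition~\ref{property-of-adj-m-to-coms}) is a left adjoint by that proposition; the second is $[I \to (I \sqcup -)] \circ \Ev_\1$, where $\Ev_\1 = i_\1^{\star}$ is a left adjoint by the preceding Remark and $[I \to (I \sqcup -)]\colon \M \to (I \downarrow \M)$ is left adjoint to the projection $\mathrm{pr}\colon (I \downarrow \M) \to \M$. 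So it suffices to produce a natural isomorphism between their right adjoints.

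Reversing each adjunction, the right adjoint of $\Ev_\1 \circ \Gamma$ is $\Ub \circ taut_u$, where $taut_u\colon (I \downarrow \M) \to \coms$ is the right adjoint built in the proof of Proposition~\ref{property-of-adj-m-to-coms}, and the right adjoint of $[I \to (I \sqcup -)] \circ \Ev_\1$ is $Ran_{i_\1} \circ \mathrm{pr}$, with $Ran_{i_\1}$ the right Kan extension along $i_\1\colon \uc \to \Phepiopl$. I would then evaluate both on an object $f\colon I \to m$ of $(I \downarrow \M)$. By construction $taut_u(f) = h^{\star}[\ast]$ has value $m$ at $\1$ and the terminal object $\ast$ at every $\n$ with $n \geq 2$, with structure maps the canonical $m \to \ast$ and identities; so its underlying diagram $\Ub(taut_u(f))$ depends only on $m = \mathrm{pr}(f)$. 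On the other side, because $\1$ is initial in $\Phepiopl$ the comma category computing $Ran_{i_\1}(m)$ at $\n$ is a point for $n=1$ and empty for $n \geq 2$, giving again $m$ at $\1$ and $\ast$ higher up — precisely the description of $\Ub(taut_u(f))$ above, which is the identification recorded in the Remark. Matching values, structure maps, and naturality in $f$ yields $\Ub \circ taut_u \cong Ran_{i_\1} \circ \mathrm{pr}$, hence $\Ev_\1 \circ \Gamma \cong [I \to (I \sqcup -)] \circ \Ev_\1$, which is assertion (1). One could equally avoid right adjoints and argue by Yoneda: unwinding $\Ev_\1 \dashv taut_u$, $\Gamma \dashv \Ub$ and $i_\1^{\star} \dashv Ran_{i_\1}$ shows that, for every $g\colon I \to n$, both $\Hom(\Ev_\1\Gamma\F, g)$ and $\Hom\big((I \hookrightarrow I \sqcup \F(\1)), g\big)$ are naturally isomorphic to $\Hom_\M(\F(\1), n)$.

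Assertion (2) then follows by evaluating the natural isomorphism of (1) at the morphism $\sigma$: the component $\Ev_\1(\Gamma(\sigma))$ is carried to the image of $\sigma_\1$ under $[I \to (I \sqcup -)]$, which is exactly the coproduct map $\Id_I \sqcup \sigma_\1\colon I \sqcup \F(\1) \to I \sqcup \G(\1)$. To see this is a (trivial) cofibration whenever $\sigma_\1$ is, I would observe that the square built from the coproduct inclusions $\F(\1) \hookrightarrow I \sqcup \F(\1)$ and $\G(\1) \hookrightarrow I \sqcup \G(\1)$ presents $\Id_I \sqcup \sigma_\1$ as the cobase change of $\sigma_\1$ along $\F(\1) \hookrightarrow I \sqcup \F(\1)$, using $(I \sqcup \F(\1)) \sqcup_{\F(\1)} \G(\1) \cong I \sqcup \G(\1)$; stability of (trivial) cofibrations under pushout finishes the argument.

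The only step that is not pure adjunction bookkeeping is the identification $\Ub \circ taut_u \cong Ran_{i_\1} \circ \mathrm{pr}$, i.e. pinning down the shape of the right Kan extension (value $m$ at $\1$, terminal object elsewhere). Since this is exactly what the preceding Remark establishes, I expect no real obstacle, and the rest of the argument is formal.
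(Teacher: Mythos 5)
Your proposal is correct and follows essentially the same route as the paper: both composites are identified as left adjoints of the single functor $(I \downarrow \M) \xrightarrow{taut_u} \coms \xrightarrow{\Ub} \Hom(\Phepiopl,\M)$, and uniqueness of left adjoints concludes. The only difference is one of explicitness — you spell out the identification $\Ub \circ taut_u \cong Ran_{i_\1}\circ \mathrm{pr}$ that the paper delegates to the preceding Remark, and you prove closure of (trivial) cofibrations under $\Id_I \sqcup (-)$ by cobase change where the paper simply invokes closure under coproducts; both are valid.
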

\begin{proof}
Both functors are left adjoint to the composite $$(I \downarrow \M) \xrightarrow{taut_u} \coms  \xrightarrow{\Ub} \Hom(\Phepiopl, \M).$$
The uniqueness of the left adjoint gives the first assertion. The second assertion is a consequence of the first. Finally in any model category the class of (trivial) cofibrations is closed under coproduct.
\end{proof}

We close this subsection with a general version of Lemma \ref{lem-important-pushout}. 

\begin{lem}\label{lem-important-pushout-general}
Let $\M$ be a symmetric monoidal model category that is combinatorial and left proper. 
\begin{enumerate}
\item  The functor $(I \downarrow \M) \to \M$ creates (hence preserves) pushouts. 
\item The functor $\Ev_1: \coms \to (I \downarrow \M) \to \M$ preserves pushouts. In particular pushouts in $\coms$ are taken level-wise at the value of $\1$.
\item Let $\theta: \F \to \G$ be a map in $\coms$ and let $D$ be a pushout square in $\coms$:
 \[
 \xy
(0,18)*+{\F}="W";
(0,0)*+{ \G}="X";
(30,0)*+{\Ba}="Y";
(30,18)*+{\Aa}="E";
{\ar@{->}^-{\ol{\sigma}}"X";"Y"};
{\ar@{->}^-{\theta}"W";"X"};
{\ar@{->}^-{\sigma}"W";"E"};
{\ar@{->}^-{\varepsilon}"E";"Y"};
\endxy
\]
Then the image of $D$ by the evaluation at $\1$ is the pushout square in $\M$:
 \[
 \xy
(0,18)*+{\F(\1)}="W";
(0,0)*+{ \G(\1)}="X";
(30,0)*+{\Ba(\1)}="Y";
(30,18)*+{\Aa(\1)}="E";
{\ar@{->}^-{\ol{\sigma}_{\1}}"X";"Y"};
{\ar@{->}^-{\theta_\1}"W";"X"};
{\ar@{->}^-{\sigma_{\1}}"W";"E"};
{\ar@{->}^-{\varepsilon}"E";"Y"};
\endxy
\]
In particular:
\begin{itemize}[label=$-$]
\item The map $\varepsilon_{\1}: \Aa(\1) \to \Ba(\1)$ is a (trivial) cofibration if $\theta_\1$ is a (trivial) cofibration;
\item Since $\M$ is left proper, the map $\ol{\sigma}_{\1}$ is a weak equivalence if $\theta_1$ is a cofibration and if $\sigma_\1$ is a weak equivalence. 
\end{itemize}
\end{enumerate}
\end{lem}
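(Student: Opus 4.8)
The plan is to recycle the structure of the proof of Lemma~\ref{lem-important-pushout}, the only novelty being that $\theta$ is now an arbitrary morphism of $\coms$ rather than one of the special maps $\Gamma\Fb^{1}_{f}$. This changes nothing essential, because all three assertions only ever read off the behaviour of the square at the initial entry $\1$, and the computation of that entry is governed by the adjoint properties already established.

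First I would dispose of assertion~(1). In the under category $(I\downarrow\M)$, a cocone is a cocone of the underlying objects in $\M$ together with a compatible family of structure maps out of $I$; since a pushout is a connected colimit, the structure map out of $I$ for the colimit object is forced by compatibility. Hence the forgetful functor $(I\downarrow\M)\to\M$ creates, and in particular preserves, pushouts. This is the standard fact that the forgetful functor from a coslice category creates connected colimits.

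Next, for assertion~(2), I would invoke Proposition~\ref{property-of-adj-m-to-coms}: there it is shown that $\Ub:\coms\to(I\downarrow\M)$ is itself a left adjoint, with right adjoint $taut_u$, so it preserves all colimits and in particular pushouts. Composing the pushout-preserving $\Ub$ with the pushout-creating functor $(I\downarrow\M)\to\M$ of assertion~(1) exhibits $\Ev_\1$ as a composite of pushout-preserving functors; hence $\Ev_\1$ preserves pushouts, and the value at $\1$ of a pushout in $\coms$ is computed as the corresponding pushout in $\M$. I would stress that this is the only nontrivial input and the exact point at which the co-Segal formalism differs from the strict one: it is precisely Proposition~\ref{property-of-adj-m-to-coms} that guarantees the premonoid structure does not obstruct the computation of the pushout at the initial entry. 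If there is a ``main obstacle'', it lives entirely in that proposition; everything else is formal.

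Finally, assertion~(3) is a formal consequence. Applying $\Ev_\1$ to the square $D$ and using assertion~(2) yields that the displayed square with vertices $\F(\1),\G(\1),\Aa(\1),\Ba(\1)$ is a pushout in $\M$. In this pushout the map $\varepsilon_\1:\Aa(\1)\to\Ba(\1)$ is the cobase change of $\theta_\1$ along $\sigma_\1$, so it is a (trivial) cofibration whenever $\theta_\1$ is one, by closure of (trivial) cofibrations under pushout. Dually $\ol{\sigma}_\1:\G(\1)\to\Ba(\1)$ is the cobase change of $\sigma_\1$ along $\theta_\1$; when $\theta_\1$ is a cofibration and $\sigma_\1$ a weak equivalence, the left properness of $\M$ forces $\ol{\sigma}_\1$ to be a weak equivalence. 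Beyond keeping straight which leg of the square is the cobase change of which map, there is no genuine difficulty, all the weight resting in assertion~(2).
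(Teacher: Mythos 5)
Your proposal is correct and fills in exactly the argument the paper leaves implicit (its own proof is just ``Clear''): assertion (1) is the standard fact that a coslice forgetful functor creates connected colimits, assertion (2) follows from Proposition \ref{property-of-adj-m-to-coms} exactly as you say, and assertion (3) is the formal cobase-change plus left-properness argument. This is the same route the paper intends, so there is nothing further to add.
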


\begin{proof}
Clear.
\end{proof}

\begin{note}
We will use this lemma later to show that we can still have a model structure on $\coms$ with the same weak equivalences but with more cofibrations. 
\end{note}

\section{Localizing sets}\label{section-loc-set}

\subsection{From the arrow category $\Ar(\M)$ to $\coms$}
\begin{warn}
We would like to warn the reader about our notation for the \emph{walking-morphism category}. Although it's natural to denote this category by $[1]= \{0 \to 1\}$ or $[2]=\{1 \to 2\}$ we will \ul{not} use this notation. The reason being that it might create a confusion with our notation for the objects $\1,\2$ of $\Phi$. Therefore we will denote this category by $\Un$.

\textsl{From now we will use the notation $\Un=[\ap \to \bp]$ for the walking morphism category}.
\end{warn}

By definition, any morphism $f: a \to b$ in a category $\B$ can be identified with a functor $$f:\Un \to \B$$ given by $f(-)=a$ and $f(+)=b$. This justifies the notation $\Ar(\B)=\B^{\Un}$.
\begin{nota}
\begin{enumerate}
\item Let $\n >\1$ be an object of $\Phepiopl$ and let $u_\n: \1 \to \n$ be the unique morphism therein. We can identity $u_\n$ with a functor $u_\n:\Un \to \Phepiopl$. Pulling back along $u_\n$ is just the evaluation at $u_\n$. We will denote by $\Ev_{u_\n}: \coms \to \M^{\Un}$
the composite $$\coms \xrightarrow{\Ub} \Hom(\Phepiopl,\M) \xrightarrow{u_\n^\star} \M^{\Un}.$$
\item Let $\iota_\M:\M \to \M^{\Un}$ be the natural embedding that takes an object $m$ to $\Id_{m}$; it takes a morphism $f: m \to m'$ to the morphism $[f]: \Id_m \to \Id_{m'}$ of $\M^{\Un}$, whose components are both equal to $f$.
\item Finally let $\Ev_+: \M^{\Un} \to \M$ be the `target functor' that takes $f:m \to m'$ to its target $m'$. It's not hard to see that $\Ev_+$ is a left adjoint to $\iota_\M$.  
\end{enumerate}
\end{nota}

\begin{prop}\label{prop-av}
Let $\n >\1$ be an object of $\Phepiopl$, and let 
$u_\n: \1 \to \n$ be the unique morphism therein. Then the following hold.
\begin{enumerate}
\item The evaluation $\Ev_{u_\n}:\coms \to \M^{\Un}$ has a left adjoint
$$\Psi_\n: \M^{\Un} \to \coms.$$ 
\item If $\chi: f \to g$ is a morphism in $\M^{\Un}$ such that the component $\chi_{\ap}: f(\ap) \to g(\ap)$ is a (trivial) cofibration then the component $\Psi_\n(\chi)_{\1}: \Psi_\n(f)(\1) \to \Psi_\n(g)(\1)$ is also a (trivial) cofibration in $\M$.
\end{enumerate}
\end{prop}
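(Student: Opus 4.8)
The plan is to realize $\Psi_\n$ as a composite of left adjoints and then compute its component at $\1$ explicitly, so that assertion (2) reduces to closure of (trivial) cofibrations under coproduct.

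For assertion (1), I would factor $\Ev_{u_\n}$ exactly as in its definition, namely $\Ev_{u_\n} = u_\n^\star \circ \Ub$, where $\Ub: \coms \to \Hom(\Phepiopl,\M)$ is the forgetful functor and $u_\n^\star: \Hom(\Phepiopl,\M) \to \M^{\Un}$ is restriction along the functor $u_\n: \Un \to \Phepiopl$. By Theorem \ref{exist-left-adj} the functor $\Ub$ admits a left adjoint $\Gamma$; and since $\M$ is cocomplete (being locally presentable), restriction along $u_\n$ admits a left adjoint given by the pointwise left Kan extension $\Lan_{u_\n}$. Setting $\Psi_\n := \Gamma \circ \Lan_{u_\n}$ then produces a left adjoint to $\Ev_{u_\n}$, since a composite of left adjoints is a left adjoint.

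For assertion (2), the key computation is the value of $\Lan_{u_\n}(f)$ at the initial object $\1$. By the pointwise formula, $\Lan_{u_\n}(f)(\1)$ is the colimit of $f$ over the comma category $(u_\n \downarrow \1)$, whose objects are pairs $(x,\phi)$ with $x \in \{\ap,\bp\}$ and $\phi: u_\n(x) \to \1$ a morphism of $\Phepiopl$. The crucial point is that $\1$ is initial in $\Phepiopl$, so the only morphisms with target $\1$ are identities; in particular, for $\n > \1$ there is no morphism $\n \to \1$, so the pair with $x=\bp$ contributes nothing and $(\ap,\Id_\1)$ is the unique object. Thus the comma category is terminal and $\Lan_{u_\n}(f)(\1) \cong f(\ap)$, naturally in $f$.

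Finally I would push this through $\Gamma$. By the proposition of the previous subsection — which identifies $\Ev_\1 \circ \Gamma$ with $m \mapsto I \sqcup m$ and shows $\Gamma(\sigma)_\1 \cong \Id_I \sqcup \sigma_\1$ (compare also Proposition \ref{property-of-adj-m-to-coms}) — one obtains $\Psi_\n(f)(\1) = \Gamma(\Lan_{u_\n}(f))(\1) \cong I \sqcup f(\ap)$ and, for a morphism $\chi: f \to g$,
$$\Psi_\n(\chi)_\1 \cong \Id_I \sqcup \chi_{\ap} : I \sqcup f(\ap) \to I \sqcup g(\ap).$$
Since $\Id_I$ is both a cofibration and a trivial cofibration, and each of these two classes is stable under coproducts in any model category, $\Id_I \sqcup \chi_{\ap}$ is a (trivial) cofibration whenever $\chi_{\ap}$ is. I expect the main obstacle to be the bookkeeping around the comma category $(u_\n \downarrow \1)$ — specifically, confirming that the presence of the arrow $\ap \to \bp$ in $\Un$ introduces no extra objects over $\1$ — but the initiality of $\1$ in $\Phepiopl$ settles this cleanly, and no use of left properness is needed for this part.
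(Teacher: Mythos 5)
Your proposal is correct and follows essentially the same route as the paper: it defines $\Psi_\n$ as the composite of left adjoints $\Gamma \circ \Lan_{u_\n}$, uses the initiality of $\1$ in $\Phepiopl$ to identify $\Lan_{u_\n}(\chi)_\1$ with $\chi_{\ap}$, and then invokes the fact that $\Gamma$ turns this component into $\Id_I \sqcup \chi_{\ap}$, concluding by closure of (trivial) cofibrations under coproducts. Your explicit comma-category verification and your citation of the proposition computing $\Ev_\1 \circ \Gamma$ make the argument slightly more detailed than the paper's, but the content is the same.
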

\begin{proof}
Let $Lan_{u_\n}: \M^{\Un} \to \Hom(\Phepiopl, \M)$ be the left Kan extension along $u_\n$. Then $\Psi_\n$ is just the composite of left adjoints:
$$ \M^{\Un} \xrightarrow{Lan_{u_\n}} \Hom(\Phepiopl, \M)\xrightarrow{\Gamma} \coms.$$
This gives the first assertion. As for the second, it suffices to write the formula for $Lan_{u_\n}$. 

Indeed, since $\1$ is initial in $\Phepiopl$, it's not hard to see that  component of $Lan_{u_\n}(\sigma)$ at $\1$ is isomorphic to the component $\chi_{-}: f(\ap) \to g(\ap)$. Now thanks to Lemma \ref{lem-important-pushout-general}, we know that $\Gamma$  changes this component to the coproduct
$$\Id \sqcup \chi_{-}: I \sqcup f(-) \to I \sqcup g(-).$$
\end{proof}

Recall that the inclusion $\iota: \com \hookrightarrow \coms$ is a right adjoint and we've denoted by $|-|:\coms \to \com$ the corresponding left adjoint. In particular we have a left adjoint by composition:
$$\M^{\Un} \xrightarrow{\Psi_\n} \coms \xrightarrow{|-|}\com.$$

The next lemma gives a useful information about this left adjoint. Indeed this functor is left adjoint to the usual forgetful functor $\com \to \M$ followed by the embedding $\iota_\M: \M \hookrightarrow \M^{\Un}$.  

Recall that there is a classical left adjoint $Free: \M \to \com$ that extends to $\Gamma: \Hom(\Phepiopl,\M) \to \coms$ in co-Segal settings. $Free$ factors through $\Gamma$ as:
$$\M \xrightarrow{\Fb^{\1}}  \Hom(\Phepiopl,\M) \xrightarrow{\Gamma} \coms \xrightarrow{|-|} \com.$$

\begin{lem}
Let $\M$ be as previously. Then the commutative square on the left is a square of left adjoints and the commutative square on the right is the  square corresponding to the respective right adjoints.
\[
\xy
(0,18)*+{\M^{\Un}}="W";
(0,0)*+{\coms}="X";
(30,0)*+{\com}="Y";
(30,18)*+{\M}="E";
{\ar@{->}^-{|-|}"X";"Y"};
{\ar@{->}^-{\Psi_\n}"W";"X"};
{\ar@{->}^-{\Ev_+}"W";"E"};
{\ar@{->}^-{Free}"E";"Y"};
\endxy
\hspace{1.5in}
\xy
(0,18)*+{\M^{\Un}}="W";
(0,0)*+{\coms}="X";
(30,0)*+{\com}="Y";
(30,18)*+{\M}="E";
{\ar@{<-}^-{\iota}"X";"Y"};
{\ar@{<-}^-{\Ev_{u_\n}}"W";"X"};
{\ar@{<-}^-{\iota_\M}"W";"E"};
{\ar@{<-}^-{\Ub}"E";"Y"};
\endxy
\]

In particular if $\chi: f \to g$ is a morphism in $\M^{\Un}$ such that the component $\chi_+: f(+) \to g(+)$ is an isomorphism in $\M$, then the morphism of usual commutative monoids
$$|\Psi_\n(\chi)|: |\Psi_\n(f)| \to  |\Psi_\n(g)|$$
is an isomorphism.
\end{lem}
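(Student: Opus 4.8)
The plan is to establish the right-hand square first, since it is the one that can be verified directly on objects, and then to obtain the left-hand square purely formally by uniqueness of adjoints. I would begin by collecting the four adjunctions already available: $\Psi_\n \dashv \Ev_{u_\n}$ from Proposition \ref{prop-av}, the reflection $|-| \dashv \iota$, the target adjunction $\Ev_+ \dashv \iota_\M$, and the classical $Free \dashv \Ub$. Since the left square consists of the left adjoints and the right square of the corresponding right adjoints, it suffices to check commutativity of just one of the two squares.

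First I would verify that the right square commutes, i.e. that $\Ev_{u_\n} \circ \iota \cong \iota_\M \circ \Ub$ as functors $\com \to \M^{\Un}$. Fix a strict commutative monoid $A$. The functor $\iota$ sends $A$ to the \emph{constant} co-Segal premonoid, whose underlying diagram on $\Phepiopl$ takes every object to $A$ and every morphism to $\Id_A$. Evaluating at the unique arrow $u_\n : \1 \to \n$ therefore returns the morphism $\Id_A \in \M^{\Un}$. On the other branch, $\Ub$ returns the underlying object $A$ and $\iota_\M$ sends it to $\Id_A$. So the two composites agree on objects, and the analogous bookkeeping on morphisms supplies the natural isomorphism. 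This is the only genuinely computational step, and it reduces entirely to the observation that constant diagrams have identity structure maps.

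Next I would pass to left adjoints. Because the right adjoint of a composite is the composite of the right adjoints taken in the reverse order, the composite $|-| \circ \Psi_\n$ is left adjoint to $\Ev_{u_\n} \circ \iota$, while $Free \circ \Ev_+$ is left adjoint to $\iota_\M \circ \Ub$. The commutativity of the right square just established then forces $|-| \circ \Psi_\n \cong Free \circ \Ev_+$ by uniqueness of left adjoints, which is exactly the statement that the left square commutes.

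Finally, for the stated consequence, suppose $\chi : f \to g$ in $\M^{\Un}$ has $\chi_+$ invertible. By definition $\Ev_+(\chi) = \chi_+$, so $\Ev_+(\chi)$ is an isomorphism in $\M$; applying the functor $Free$ preserves this, so $Free(\Ev_+(\chi))$ is an isomorphism in $\com$. Transporting along the natural isomorphism $|-| \circ \Psi_\n \cong Free \circ \Ev_+$ from the previous step then yields that $|\Psi_\n(\chi)|$ is an isomorphism as well. The only point requiring any care throughout is keeping the variance of $u_\n$ straight inside the evaluation functor $\Ev_{u_\n}$; once the right square is pinned down correctly, everything else is formal adjoint manipulation, so I do not expect a substantive obstacle.
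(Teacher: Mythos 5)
Your proposal is correct and follows essentially the same route as the paper: the paper's proof simply asserts the identity $|\Psi_\n(\chi)| = Free(\Ev_+(\chi)) = Free(\chi_+)$ and invokes the fact that functors preserve isomorphisms, which is exactly the conclusion you reach after verifying the right square on constant diagrams and transporting to the left square by uniqueness of left adjoints. You have merely supplied the routine verification that the paper leaves implicit.
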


\begin{proof}
Indeed the functor $Free: \M \to \com$ preserves isomorphisms as any functor and we have an equality $|\Psi_\n(\chi)|= Free \circ \Ev_+(\chi)= Free( \chi_+).$
\end{proof}
\subsection{Definition of the localizing morphisms}
\begin{nota}\label{notation-set-localization}
If $\alpha: U \to V$ is a morphism of $\M$, we will denote by 
$$\alpha_{\downarrow_{\Id_V}}: \alpha \to \Id_V,$$ 
the morphism in the arrow category $\M^{\Un}$ which is identified with the following commutative square. 
\[
\xy
(0,18)*+{U}="W";
(0,0)*+{V}="X";
(30,0)*+{V}="Y";
(30,18)*+{V}="E";
{\ar@{->}^-{\Id}"X";"Y"};
{\ar@{->}^-{\alpha}"W";"X"};
{\ar@{->}^-{\alpha}"W";"E"};
{\ar@{->}^-{\Id}"E";"Y"};
\endxy
\]
\end{nota}
\begin{rmk}\label{rmk-projec-com}
Following our previous notation, the component $(\av)_-$ is exactly the map $\alpha:U \to V$ and the component $(\av)_+$ is the identity $\Id_V$ (which is a wonderful isomorphism). Then by the previous results we know that the image $|\Psi_\n(\av)|$ is an isomorphism of (free) commutative monoids.
\end{rmk}


\begin{df}
 Let $\I$ be set of generating cofibrations of $\M$.
\begin{enumerate}
\item Define the \emph{localizing set} for $\coms$ as
$$\kb(\I):= \{\coprod_{\n \geq 2} \{\Psi_\n(\av); \quad \alpha \in \I  \} \}.$$
\item Let $\ast$ be the coinitial (or terminal) object of $\coms$ and let  $\sigma$ be map in $\coms$. Say that an object $\F \in \msxsu$ is $\sigma$-injective if the unique map $\F \to \ast$ has the RLP with respect to $\sigma$. 
\item Say that $\F$ is $\kb(\I)$-injective if $\F$ is $\sigma$-injective for all $\sigma \in \kb(\I)$.
\end{enumerate}
\end{df}

Before going further we have the following result. 

\begin{prop}\label{prop-comp-kbi}
With the previous notation the following assertions hold. 
\begin{enumerate}
\item If $\sigma=\Psi_\n(\av)$ is an element of $\kb(\I)$ then the component 
$\sigma_\1$ is a cofibration in $\M$ and is isomorphic to the coproduct 
$$\Id_I \sqcup \alpha.$$ 
\item The image of any $\sigma \in \kb(\I)$ by the left adjoint $|-|: \coms \to \com$ is an isomorphism. 
\end{enumerate}
\end{prop}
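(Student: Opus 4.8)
The plan is to read off both assertions directly from the two lemmas proved immediately before the statement, since the proposition is essentially a packaging of those facts applied to the specific morphism $\av$.

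For assertion $(1)$, I would start from the definition of $\Psi_\n$ as the composite of left adjoints $Lan_{u_\n}$ followed by $\Gamma$, exactly as in Proposition \ref{prop-av}. The key observation is that the morphism $\av \colon \alpha \to \Id_V$ in $\M^{\Un}$ has source-component $(\av)_- = \alpha \colon U \to V$, as recorded in Remark \ref{rmk-projec-com}. Now I would invoke the computation inside the proof of Proposition \ref{prop-av}: because $\1$ is initial in $\Phepiopl$, the component of $Lan_{u_\n}(\av)$ at $\1$ is isomorphic to the $\ap$-component $\alpha \colon U \to V$, and then $\Gamma$ changes this component to the coproduct $\Id_I \sqcup \alpha \colon I \sqcup U \to I \sqcup V$ by Lemma \ref{lem-important-pushout-general}. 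Since $\alpha \in \I$ is a generating cofibration of $\M$ and the class of cofibrations is closed under coproduct with $\Id_I$, the component $\sigma_\1$ is a cofibration isomorphic to $\Id_I \sqcup \alpha$. This gives $(1)$ essentially verbatim from the earlier argument.

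For assertion $(2)$, I would apply the Lemma sitting between Notation \ref{notation-set-localization} and the statement --- the one whose two commuting squares exhibit $|\,{-}\,| \circ \Psi_\n$ as left adjoint to $\iota_\M \circ \Ub$. That lemma's final conclusion is precisely the tool needed: if $\chi \colon f \to g$ in $\M^{\Un}$ has target-component $\chi_+$ an isomorphism, then $|\Psi_\n(\chi)|$ is an isomorphism of commutative monoids. Here $\chi = \av$, and by Remark \ref{rmk-projec-com} the component $(\av)_+$ is the identity $\Id_V$, which is an isomorphism. Hence $|\Psi_\n(\av)|$ is an isomorphism; explicitly it equals $Free(\Id_V)$, the identity on the free commutative monoid, via the equality $|\Psi_\n(\chi)| = Free \circ \Ev_+(\chi)$ from that lemma's proof.

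The only mild subtlety --- and the step I would treat most carefully --- is the bookkeeping for the coproduct over $\n \geq 2$ in the definition of $\kb(\I)$: an element $\sigma \in \kb(\I)$ is of the form $\Psi_\n(\av)$ for a single fixed $\n \geq 2$ and a single $\alpha \in \I$, so both assertions are genuinely statements about one such $\Psi_\n(\av)$ and the two lemmas apply directly without needing to reassemble the coproduct. I do not anticipate a real obstacle here, since both parts are immediate corollaries of results already established; the work is entirely in citing the correct component computation and the correct adjunction square, and in confirming that $(\av)_-$ and $(\av)_+$ are $\alpha$ and $\Id_V$ respectively.
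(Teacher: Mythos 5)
Your proposal is correct and follows essentially the same route as the paper, which likewise disposes of assertion $(1)$ by citing Proposition \ref{prop-av} (whose proof contains the $Lan_{u_\n}$ followed by $\Gamma$ computation yielding $\Id_I \sqcup \alpha$) and of assertion $(2)$ by citing Remark \ref{rmk-projec-com} (which rests on the adjoint-squares lemma and the fact that $(\av)_+=\Id_V$). Your extra care about the reading of the coproduct in the definition of $\kb(\I)$ is consistent with how the proposition itself parses its elements, so nothing is missing.
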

\begin{proof}
The first assertion is given by Proposition \ref{prop-av}. The second assertion is just the content of Remark \ref{rmk-projec-com}.
\end{proof}

\subsection{Characteristics of the set $\kb(\I)$}
The following proposition is not hard, one simply needs to write down everything.
\begin{prop}\label{lem-lifting-inject}
Let $\theta=(f,g): \alpha \to p$ be a morphism in $\M^{\Un}$ which is represented by the following commutative square.

\[
\xy
(0,20)*+{U}="A";
(20,20)*+{X}="B";
(0,0)*+{V}="C";
(20,0)*+{Y}="D";
{\ar@{->}^{f}"A";"B"};
{\ar@{->}_{\alpha}"A";"C"};
{\ar@{->}^{p}"B";"D"};
{\ar@{->}^{g}"C";"D"};
\endxy
\]  

Then the following are equivalent. 
\begin{itemize}[label=$-$]
\item There is a lifting in the commutative square above i.e there exists $k: V \to X$  such that: $k \circ \alpha =f$, $p \circ k=g$.
\item There is a lifting in the following square of $\M^{\Un}$.
\[
\xy
(0,20)*+{\alpha}="A";
(20,20)*+{p}="B";
(0,0)*+{\Id_V}="C";
(20,0)*+{\ast}="D";
{\ar@{->}^{\theta}"A";"B"};
{\ar@{->}_{\av}"A";"C"};
{\ar@{->}_{}"C";"D"};
{\ar@{->}_{}"B";"D"};
\endxy
\]  

That is, there exists $\beta=(k,l): \Id_V \to p$ such that $\beta \circ \av= \theta$. 
\end{itemize}
\end{prop}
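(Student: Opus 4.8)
The statement is purely formal: both conditions concern lifting problems, one in $\M$ and one in the arrow category $\M^{\Un}$, and the plan is simply to unwind the definition of composition in $\M^{\Un}$ until the two lifting problems literally coincide. First I would spell out what a morphism $\beta=(k,l)\colon \Id_V \to p$ in $\M^{\Un}$ is: since the object $\Id_V$ has value $V$ at both $\ap$ and $\bp$, while $p$ has value $X$ at $\ap$ and $Y$ at $\bp$, such a $\beta$ amounts to a pair $\beta_{\ap}=k\colon V\to X$ and $\beta_{\bp}=l\colon V\to Y$, and its naturality square reads $p\circ k = l\circ \Id_V = l$. So the datum of $\beta$ is equivalent to the single map $k\colon V\to X$, with $l$ forced to equal $p\circ k$.

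Next I would compute the composite $\beta\circ\av$ componentwise, recalling from Remark \ref{rmk-projec-com} that $(\av)_{\ap}=\alpha$ and $(\av)_{\bp}=\Id_V$, while by construction $\theta_{\ap}=f$ and $\theta_{\bp}=g$. Composition in $\M^{\Un}$ is taken levelwise, so the equation $\beta\circ\av=\theta$ unwinds into the pair of equations $k\circ\alpha = f$ at $\ap$ and $l\circ\Id_V = g$ at $\bp$, i.e. $l=g$. Feeding $l=g$ back into the naturality identity $l=p\circ k$ yields $p\circ k = g$. Conversely, any $k\colon V\to X$ satisfying $k\circ\alpha=f$ and $p\circ k=g$ defines $\beta=(k,g)$, whose naturality square commutes precisely because $p\circ k=g$, and which satisfies $\beta\circ\av=\theta$ by the same component computation.

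This establishes the desired correspondence between lifts $k$ of the original square and lifts $\beta$ of the $\M^{\Un}$-square; the lower triangle in the $\M^{\Un}$-square, built from the maps to the terminal object $\ast$, commutes automatically by terminality and so imposes no extra condition. There is no genuinely hard step here: the only points to watch are the bookkeeping of which component of $\av$ is $\alpha$ and which is an identity, and the observation that the naturality of $\beta$ is exactly what converts the single target-component equation $l=g$ into the second lifting condition $p\circ k=g$. Writing everything out in components, as the paper suggests, makes the equivalence immediate.
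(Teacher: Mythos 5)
Your proposal is correct and is precisely the argument the paper intends: the paper offers no written proof, remarking only that ``one simply needs to write down everything,'' and your componentwise unwinding of morphisms and composition in $\M^{\Un}$ (including the observation that the triangle through the terminal object $\ast$ imposes no condition) is exactly that computation. No gaps.
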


Thank to this proposition and the fact that trivial fibrations in $\M$ are the $\I$-injective maps, we can establish by adjointness the following result. 
\begin{lem}\label{k-inj-cosegal}
Let $\F$ be an object of $\coms$. Then with the previous notation, the following hold.
\begin{enumerate}
\item $\F$ is $\kb(\I)$-injective if and only if for every $\n \geq \2$ the map $$\F(u_\n): \F(\1) \to \F(\n),$$ is a trivial fibration in $\M$. In particular if $\F$ is  $\kb(\I)$-injective, then $\F$ is a co-Segal commutative monoid.
\item Every  a strict commutative monoid $\F \in \com$  is $\kb(\I)$-injective.
\end{enumerate}
\end{lem}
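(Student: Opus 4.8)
The plan is to prove part (1) by translating $\kb(\I)$-injectivity through the adjunction $\Psi_\n \dashv \Ev_{u_\n}$ and the lifting criterion of Proposition \ref{lem-lifting-inject}, and then to obtain part (2) as an immediate special case.

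First I would unwind the definitions. By construction $\F$ is $\kb(\I)$-injective exactly when, for every $\n \geq \2$ and every $\alpha \in \I$, the unique map $\F \to \ast$ has the right lifting property against $\Psi_\n(\av)$. Since $\Psi_\n$ is left adjoint to $\Ev_{u_\n}$ (Proposition \ref{prop-av}) and $\Ev_{u_\n}$, being a right adjoint, preserves the terminal object, this lifting problem in $\coms$ is adjoint to a lifting problem in $\M^{\Un}$: the map $\Ev_{u_\n}(\F) \to \ast$ must have the RLP against $\av$. Here I would record the two key identifications $\Ev_{u_\n}(\F) = \bigl(\F(u_\n): \F(\1) \to \F(\n)\bigr)$ and $\Ev_{u_\n}(\ast) = \ast$ in $\M^{\Un}$.

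Next I would apply Proposition \ref{lem-lifting-inject} with $p = \F(u_\n)$: the RLP of $p$ against $\av$ is equivalent to the RLP of $\F(u_\n)$ against $\alpha$ in $\M$ itself. Letting $\alpha$ range over $\I$ shows that $\F(u_\n)$ is $\I$-injective, hence a trivial fibration (the $\I$-injective maps being precisely the trivial fibrations of $\M$); letting $\n$ range over all $\n \geq \2$ yields the claimed equivalence. For the ``in particular'' clause, each $\F(u_\n)$ is then a weak equivalence, and since $\1$ is initial in $\Phepiopl$ every morphism $v$ satisfies $v \circ u_\n = u_{\m}$, so $\F(v) \circ \F(u_\n) = \F(u_{\m})$; by two-out-of-three $\F(v)$ is a weak equivalence. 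Thus the whole underlying diagram consists of weak equivalences, and since $\F$ is already a premonoid (normal and weakly unital), this extra co-Segal condition makes $\F$ a co-Segal commutative monoid. Finally part (2) follows at once: a strict commutative monoid is a constant diagram, for which each $\F(u_\n) = \Id_{\F(\1)}$ is trivially a trivial fibration, so $\F$ is $\kb(\I)$-injective by part (1).

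The argument is essentially formal once Proposition \ref{lem-lifting-inject} is available, so the main work is the bookkeeping of the adjunction and the two identifications above; the one genuinely structural point I would be careful about is the use of the initiality of $\1$ to upgrade ``the maps $\F(u_\n)$ out of $\F(\1)$ are weak equivalences'' to ``every structure map of the diagram is a weak equivalence'', which is what the co-Segal condition actually demands.
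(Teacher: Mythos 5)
Your proof is correct and follows essentially the same route as the paper: unwind $\kb(\I)$-injectivity through the adjunction $\Psi_\n \dashv \Ev_{u_\n}$, apply Proposition \ref{lem-lifting-inject} to reduce to the RLP of $\F(u_\n)$ against $\I$, and treat strict monoids as constant diagrams. Your explicit two-out-of-three step upgrading ``the maps $\F(u_\n)$ are weak equivalences'' to ``all structure maps are weak equivalences'' is a detail the paper leaves implicit, and it is a worthwhile addition.
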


\begin{proof}
If $\F$ is $\kb(\I)$-injective, by definition, $\F$ is $\Psi_\n(\av)$-injective for all generating cofibration $\alpha$ in $\M$. And by adjointness we find that $\F(u_\n)$ is $\av$-injective. Thanks to the previous proposition, this is equivalent to saying that any lifting problem defined by $\alpha$ and $\F(u_\n)$ has a solution.

 Consequently $\F$ is $\kb(\I)$-injective if and only if  $\F(u_\n)$ has the RLP with respect to all maps in $\I$, if and only if  $\F(u_\n)$ is a trivial fibration as claimed. This proves Assertion $(1)$. 

Assertion $(2)$ is a corollary of Assertion $(1)$ since strict commutative monoids are the constant lax diagrams.  Therefore $\F(u_n)$ is an identity which is a trivial fibration.
\end{proof}

\section{Subcategory of $2$-constant  commutative premonoids}\label{section-2-constant}
\begin{note}
The discussion that follows is motivated by our desire to analyze the unit of the adjunction 
$|-|:\coms \leftrightarrows \com: \iota$. 
\end{note}

We take a moment to outline an important class of commutative premonoids that will be needed later. Let  $\Phepiop_{\geq \2} \subset \Phepi$ the full  subcategory formed by the  objects $\n \geq \2$. 
\begin{df}
Say that a commutative premonoid  $\F \in \coms$ is \ul{$2$-constant} if the restriction of $\F$ to $\Phepiop_{\geq \2}$ 
$$\F_{\geq \2}: \Phepiop_{\geq \2} \to \M,$$
is a constant functor.
\end{df}
\begin{rmk}
\begin{enumerate}
\item It follows from the definition that any $2$-constant premonoid has an underlying nonunital commutative monoid $\F_{\geq \2}$.  This premonoid $\F_{\geq 2}$ inherits of the pseudo-unit element maps $e: I \to \F(\2)$ but  it doesn't necessarily satisfy the usual unitality conditions for strict commutative monoid. We may change our definition of unital premonoid so that $\F_{\geq 2}$ becomes automatically a usual commutative monoid but this won't change the outcome of the homotopy theory.
\item  The $2$-constant commutative premonoids we will consider in a moment, have the property that $\F_{\geq 2}$ is already a strict commutative monoid i.e, an object of $\com$.
\end{enumerate}
\end{rmk}

\begin{df}\label{df-perfect-2-unit}
Say that a  $2$-constant commutative premonoid $\F$ is \emph{perfectly} $2$-constant if  $\F_{\geq 2}$ is a usual strict commutative monoid.
\end{df}
\begin{warn}
From now on, when we say $2$-constant we mean perfectly $2$-constant. This is for simplicity only.
\end{warn}
\subsection{Associated $2$-constant commutative premonoid}
Let's consider again the adjunction $|-|:\coms \leftrightarrows \com: \iota$. 
\begin{df}\label{rmk-associated-deux-constant}
Let $\F$ be a commutative premonoid and let $\eta: \F \to \iota(|\F|)$ be the unity of the adjunction. Let $h: \F(\1) \to \iota(|\F|)(\1)$ be the component of this map  at the initial entry $\1$. 
\begin{enumerate}
\item Define the associated $2$-constant commutative premonoid of $\F$ to be commutative premonoid $h^\star \iota(|\F|)$ obtained using the construction described in Scholium \ref{schol1} applied to $\iota(|\F|)$ with respect to the maps $h$.  In particular  $[h^\star \iota(|\F|)]_{\geq 2}= \iota(|\F|)$ is a usual strict commutative monoid. 
\item Define the fundamental factorization for $\eta: \F \to \iota(|\F|)$ as:
$$\F \xrightarrow{\rho} h^\star \iota(|\F|) \xrightarrow{\epsilon} \iota(|\F|).$$
The map $\F \xrightarrow{\rho} h^\star \iota(|\F|) $ is the identity at the initial entry $\1$ and the map $ h^\star\iota(|\F|) \xrightarrow{\epsilon} \iota(|\F|)$ is the identity everywhere outside the initial entry.
\end{enumerate}
\end{df}

\begin{prop}\label{prop-equiv-constant}
With the previous notation, the following hold.
\begin{enumerate}
\item The map $\F \xrightarrow{\rho} h^\star \iota(|\F|)$ is an easy weak equivalence i.e, a weak equivalence in $\comse$.
\item Let $ L: \comse \to \Ba$ be a functor that takes easy weak equivalences to isomorphisms in $\Ba$. Then $L(\F \xrightarrow{\eta} \iota(|\F|))$ is an isomorphism in $\Ba$ if and only if $ L(h^\star\iota(|\F|) \xrightarrow{\epsilon} \iota(|\F|))$ is an isomorphism in $\Ba$
\end{enumerate}
\end{prop}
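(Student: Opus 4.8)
The plan is to prove the two assertions essentially from the structure of the fundamental factorization $\F \xrightarrow{\rho} h^\star \iota(|\F|) \xrightarrow{\epsilon} \iota(|\F|)$, together with the two-out-of-three property of weak equivalences.

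For assertion (1), I would argue that $\rho$ is an easy weak equivalence by unwinding the very definition of the easy model structure $\comse$ given in Theorem \ref{easy-model-coms}: a map in $\coms$ is an easy weak equivalence precisely when its component at $\1$ is a weak equivalence in $\M$. By Definition \ref{rmk-associated-deux-constant}, the map $\F \xrightarrow{\rho} h^\star \iota(|\F|)$ is the identity at the initial entry $\1$ (the construction of Scholium \ref{schol1} changes only the value at the initial object, but here $h^\star$ is applied so that the $\1$-component of $\rho$ is $\Id_{\F(\1)}$, or more precisely the identity on the unchanged initial value). Since the identity map of $\F(\1)$ is a weak equivalence in $\M$, the map $\Ev_\1(\rho)$ is a weak equivalence, hence $\rho$ is an easy weak equivalence. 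This is the short half of the proof and should follow directly from the bookkeeping in Scholium \ref{schol1} about which components are identities.

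For assertion (2), I would use that $\eta = \epsilon \circ \rho$ is the factorization of the unit map, so $L(\eta) = L(\epsilon) \circ L(\rho)$ since $L$ is a functor. By assertion (1), $\rho$ is an easy weak equivalence, and by hypothesis $L$ sends easy weak equivalences to isomorphisms in $\Ba$; therefore $L(\rho)$ is an isomorphism. Now the equivalence is purely formal: if $L(\eta)$ is an isomorphism then $L(\epsilon) = L(\eta) \circ L(\rho)^{-1}$ is a composite of isomorphisms, hence an isomorphism; conversely if $L(\epsilon)$ is an isomorphism then $L(\eta) = L(\epsilon) \circ L(\rho)$ is a composite of isomorphisms, hence an isomorphism. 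This is just the two-out-of-three property for isomorphisms in $\Ba$ applied to the factored map.

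I expect the only genuine point requiring care to be the verification in assertion (1) that the $\1$-component of $\rho$ really is an identity (or at least a weak equivalence) — that is, confirming that the construction $h^\star \iota(|\F|)$ of Scholium \ref{schol1} leaves the initial value as $\F(\1)$ rather than replacing it with $\iota(|\F|)(\1)$. Tracing through Definition \ref{rmk-associated-deux-constant}, the map $h: \F(\1) \to \iota(|\F|)(\1)$ is exactly the datum fed into the Scholium construction, and by design the resulting $h^\star \iota(|\F|)$ has initial value $\F(\1)$ with $\rho$ the identity there and $\epsilon$ the identity everywhere outside $\1$; once this is pinned down, both assertions are immediate, and no nontrivial homotopy-theoretic input beyond the definition of easy weak equivalence is needed.
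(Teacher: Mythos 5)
Your proposal is correct and matches the paper's own argument: assertion (1) follows because the $\1$-component of $\rho$ is the identity (hence a weak equivalence in $\M$, hence $\rho$ is an easy weak equivalence by definition of $\comse$), and assertion (2) is the two-out-of-three property for isomorphisms applied to $L(\eta)=L(\epsilon)\circ L(\rho)$ with $L(\rho)$ invertible. Your careful check that the Scholium \ref{schol1} construction leaves the initial value equal to $\F(\1)$ is exactly the bookkeeping the paper relies on.
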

\begin{proof}
The component of $\rho$ at $\1$ is the identity which is a  weak equivalence, this gives Assertion $(1)$. Assertion $(2)$ is a consequence of Assertion $(1)$ together with the fact that isomorphisms in any category $\Ba$ have the $3$-for-$2$ property.
\end{proof}
\subsection{The small object argument for $2$-constant premonoids}
In the following we are interested in factoring the map $h^\star\iota(|\F|) \xrightarrow{\epsilon} \iota(|\F|)$ using the small object argument with respect to a subset of the localizing set $\kb(\I)$. We refer the reader to \cite{Dwyer_Spalinski}, \cite{Hov-model} for a detailed account on the small object argument. The idea amounts to take sequentially pushouts of coproduct of maps in $\kb(\I)$.

 It is then important  to have a careful analysis of such pushout. We start below with a proposition that allows us to reduce our analysis to the case of a pushout of a single map in $\kb(\I)$.
\begin{lem}\label{coproduc-pushout}
Let $\B$ be a category with all small colimits and let $f:A \to E$ be the pushout of the coproduct $\coprod_{k \in K} C_k \xrightarrow{\coprod g_k} \coprod_{k \in K}D_k$:
\[
\xy
(0,18)*+{\coprod_{k \in K} C_k}="W";
(0,0)*+{\coprod_{k \in K} D_k}="X";
(30,0)*+{E}="Y";
(30,18)*+{A}="E";
{\ar@{->}^-{p}"X";"Y"};
{\ar@{->}^-{\coprod g_k}"W";"X"};
{\ar@{->}^-{q}"W";"E"};
{\ar@{->}^-{f}"E";"Y"};
\endxy
\]

For every $k \in K$, let $f_k :A \to E_k$ be the pushout of $g_k$ along the attaching map $C_k \xhookrightarrow{i_k}\coprod_{k \in K} C_k \xrightarrow{q}A$:
\[
\xy
(0,18)*+{C_k}="W";
(15,18)*+{\coprod _k C_k}="Z";
(0,0)*+{D_k}="X";
(30,0)*+{E_k}="Y";
(30,18)*+{A}="E";
{\ar@{->}^-{}"X";"Y"};
{\ar@{->}^-{g_k}"W";"X"};
{\ar@{->}^-{}"W";"Z"};
{\ar@{->}^-{}"Z";"E"};
{\ar@{->}^-{f_k}"E";"Y"};
\endxy
\]

Let $O$ be the colimit of the wide pushout $\{A \xrightarrow{f_k} E_k \}_{k\in K}$ and let $\delta:A \to O$ be the canonical map going to the colimit.\\

Then we have an isomorphism $f \cong \delta$ in the comma category $(A \downarrow \B)$. In particular we have an isomorphism $O \cong E$ in $\B$.
\end{lem}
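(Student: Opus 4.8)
The plan is to prove the sharper statement that $f$ and $\delta$ corepresent the same functor on the comma category $(A \downarrow \B)$, so that the isomorphism $f \cong \delta$ — and hence $O \cong E$ — drops out of the Yoneda lemma. Concretely, for an object $a\colon A \to T$ of $(A \downarrow \B)$ I would compute the set of morphisms $f \to a$ and the set of morphisms $\delta \to a$ and exhibit a bijection between them that is natural in $a$.

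First, unwinding the pushout defining $E$: a morphism $f \to a$ in $(A \downarrow \B)$ is a map $E \to T$ restricting to $a$ on $A$, which by the universal property of the pushout amounts to a map $\coprod_{k} D_k \to T$ whose precomposite with $\coprod_k g_k$ agrees with $a \circ q$ on $\coprod_k C_k$. Using $\Hom(\coprod_k C_k, T) \cong \prod_k \Hom(C_k, T)$ and likewise for the $D_k$, this data is precisely a family $\{d_k \colon D_k \to T\}_{k \in K}$ satisfying $d_k \circ g_k = a \circ q \circ i_k$ for every $k$.

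Next I would carry out the same unwinding for $O$. By definition of the wide pushout, a morphism $\delta \to a$ is a cocone, i.e.\ a family of maps $e_k \colon E_k \to T$ each restricting to $a$ along $f_k$. Since $E_k$ is the pushout of $g_k$ along $q \circ i_k \colon C_k \to A$, giving such an $e_k$ (under $a$) is exactly giving a map $d_k \colon D_k \to T$ with $d_k \circ g_k = a \circ q \circ i_k$. Thus a morphism $\delta \to a$ is again a family $\{d_k\}_{k \in K}$ subject to the identical compatibility condition. These identifications are manifestly natural in $a$, so $f$ and $\delta$ corepresent the same functor on $(A \downarrow \B)$ and are therefore isomorphic there; evaluating this comma-category isomorphism on underlying objects gives $O \cong E$.

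I do not expect a genuine obstacle: the content is entirely formal, resting on the interchange of colimits (coproducts and pushouts are both colimits) together with the decomposition $\Hom(\coprod_k C_k, -) \cong \prod_k \Hom(C_k, -)$. The one place demanding care — and the step I would double-check most — is the bookkeeping of attaching maps: one must verify that the map $C_k \to A$ used to build $E_k$, namely $q \circ i_k$, is the very same composite that the single pushout defining $E$ produces when its compatibility condition is restricted along $i_k$. If one prefers an explicit construction to the Yoneda argument, the two inverse isomorphisms can be written down directly: the canonical maps $E_k \to E$ induced by $D_k \hookrightarrow \coprod_k D_k \xrightarrow{p} E$ form a cocone and assemble into a map $O \to E$, while the composites $D_k \to E_k \to O$ together with $\delta\colon A \to O$ induce a map $E \to O$, and a short diagram chase confirms these are mutually inverse over $A$.
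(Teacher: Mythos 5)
Your proposal is correct. It differs in packaging from the paper's argument: the paper constructs an explicit comparison square, namely it post-composes each pushout square defining $f_k$ with the canonical map $\tau_k\colon E_k \to O$, assembles these via the universal property of the coproduct into a single square exhibiting $O$ as a candidate pushout of $\coprod_k g_k$ along $q$ (checking that the attaching map is indeed $q$), and then leaves the verification of the universal property to the reader. Your route instead computes, for each $a\colon A \to T$ in $(A\downarrow \B)$, both $\Hom(f,a)$ and $\Hom(\delta,a)$, unwinds each to the same set of families $\{d_k\colon D_k \to T\}$ with $d_k \circ g_k = a \circ q \circ i_k$, and invokes Yoneda. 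The two arguments rest on the same formal facts (interchange of colimits and $\Hom(\coprod_k C_k,-)\cong \prod_k\Hom(C_k,-)$), but your version is effectively two-sided and self-contained: the hom-set computation is precisely the universal-property check that the paper's proof delegates to the reader, so nothing is left implicit. The point you flag as needing care --- that the attaching map for $E_k$ is $q\circ i_k$, matching the restriction along $i_k$ of the compatibility condition for $E$ --- is exactly the point the paper also singles out, and it is handled correctly in both.
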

\begin{proof}
Let $\tau_k:E_k \to O$ be the canonical map going to the colimit of the wide pushout. By definition of the colimit we have an equality $\delta= \tau_k \circ f_k$. Using $\tau_k$ we can extend the pushout square defining $f_k$  to have the following commutative square.
\[
\xy
(0,18)*+{C_k}="W";
(15,18)*+{\coprod _k C_k}="Z";
(0,0)*+{D_k}="X";
(30,0)*+{O}="Y";
(30,18)*+{A}="E";
{\ar@{->}^-{}"X";"Y"};
{\ar@{->}^-{g_k}"W";"X"};
{\ar@{->}^-{}"W";"Z"};
{\ar@{->}^-{}"Z";"E"};
{\ar@{->}^-{\delta}"E";"Y"};
\endxy
\]
Let's regard this commutative square as a map $g_k \to \delta$ in the arrow category $\B^{[1]}$. The universal property of the coproduct implies that there is a unique induced map 
$ \coprod_k g_k \to \delta$ satisfying the usual factorizations.  The later map represents a commutative square in $\B$ as:
\[
\xy
(0,18)*+{\coprod_{k \in K} C_k}="W";
(0,0)*+{\coprod_{k \in K} D_k}="X";
(30,0)*+{O}="Y";
(30,18)*+{A}="E";
{\ar@{->}^-{}"X";"Y"};
{\ar@{->}^-{\coprod g_k}"W";"X"};
{\ar@{->}^-{}"W";"E"};
{\ar@{->}^-{\delta}"E";"Y"};
\endxy
\]

But since coproducts in $\B^{[1]}$ are taking point-wise, it's not hard to see that the attaching map $\coprod_{k \in K} C_k \to A$ is exactly the map $q$ in the original diagram.\\

We leave the reader to check that this commutative square is the universal  pushout square. That is, the object $O$ equipped with the map $\delta$ and the other one, satisfies the universal property of the pushout of $\coprod_k g_k$ along the attaching map $q$.
\end{proof}

\subsection{\emph{`co-Segalification'} for $2$-constant commutative premonoids}
If we want to define a functor $\Sim$ that takes a $2$-constant commutative premonoid $\F$ to a commutative premonoid that satisfies the co-Segal conditions, the natural thing to do is to factor the map $\F(\1) \to |\F|(\1)$ as a cofibration followed by a trivial fibration:
$$\F(\1) \hookrightarrow m \xtwoheadrightarrow{\sim} |\F|(\1).$$
After this we want to set $\Sim(\F)(\1)=m$ and $|\Sim(\F) |= |\F|$. This gives a $2$-constant premonoid that satisfies the co-Segal conditions. The purpose of the following discussion is to show that this can be done as $\kb_2$-injective replacement in $\coms$ where $\kb_2$ is a subset of the localizing set $\kb(\I)$.
\begin{df}
Define the \emph{minimal localizing set} for $2$-constant commutative premonoids as:
$$\kb_2= \{\Psi_\2(\av); \quad \alpha \in \I  \}.$$
\end{df}

\subsubsection{Pushout of an element of $\kb_2$}
Let $\Psi_\2(\alpha)$ be an element of $\kb_2$. We want to calculate the pushout of such morphism. But before doing this, let's recall some facts about the adjunction $\Ev_{u_\2}: \coms \leftrightarrows \M^{\Un}: \Psi_\2$. 
\begin{rmk}\label{rmk-important-deux-constant}
Let $\F$ be a  $2$-constant commutative premonoid and let $\G$ be any commutative premonoid.
\begin{enumerate}
\item A commutative square in $\coms$
\begin{equation}\label{diag-av-msx}
\xy
(0,18)*+{\Psi_\2(\alpha)}="W";
(0,0)*+{\Psi_\2(\Id_V) }="X";
(30,0)*+{\G}="Y";
(30,18)*+{\F}="E";
{\ar@{->}^-{}"X";"Y"};
{\ar@{->}^-{\Psi_\2(\av)}"W";"X"};
{\ar@{->}^-{\sigma}"W";"E"};
{\ar@{->}^-{\theta}"E";"Y"};
\endxy
\end{equation}
is equivalent by adjointness to a  commutative square in $\M^{\Un}$:
\begin{equation}\label{diag-av-arrow}
\xy
(0,18)*+{\alpha}="W";
(0,0)*+{\Id_V }="X";
(30,0)*+{\G(u_\2)}="Y";
(30,18)*+{\F(u_\2)}="E";
{\ar@{->}^-{}"X";"Y"};
{\ar@{->}^-{\av}"W";"X"};
{\ar@{->}^-{\sigma}"W";"E"};
{\ar@{->}^-{\theta}"E";"Y"};
\endxy
\end{equation}

\item  The commutative diagram \eqref{diag-av-arrow} in $\M^{\Un}$ is equivalent to a commutative cube in $\M$. And if we write this cube we find that  \eqref{diag-av-arrow} is equivalent to having the commutative diagram \eqref{diag-av-m} below, together with a lifting for the square defined by $\alpha$ and $\G(u_\2)$.
\begin{equation}\label{diag-av-m}
\xy
(0,20)*+{U }="A";
(25,20)*+{\F(\1)}="B";
(0,0)*+{V}="C";
(25,0)*+{|\F|(\1)}="D";
(50,20)*+{\G(\1)}="E";
(50,0)*+{\G(\2)}="F";
{\ar@{->}^-{}"A";"B"};
{\ar@{->}_-{\alpha}"A"+(0,-3);"C"};
{\ar@{->}_-{}"C"+(3,0);"D"};
{\ar@{->}^-{}_{}"B"+(0,-3);"D"};
{\ar@{->}^-{}"B";"E"};
{\ar@{->}_-{}^-{}"D";"F"};
{\ar@{->}^-{}"E";"F"};
{\ar@{.>}^-{}"C";"E"};
\endxy
\end{equation}

In this diagram, the square on the left represents $\sigma$, and the one on the right represents $\theta$. The whole commutative square represents the composite $\theta \circ \sigma$. The lifting $V \to \G(\1)$ and the commutativity of the lower triangle determine the map $\Id_V \to \G(u_\2)$ in the diagram \eqref{diag-av-arrow}.

\end{enumerate}
\end{rmk}
\begin{lem}\label{lem-deux-fondamental}
Let $\F$ be a  $2$-constant commutative premonoid such that $\F_{\geq 2}$ is a strict commutative monoid. Let $\varepsilon: \F \to \Ea$ be the pushout of $\Psi_\2(\av)$  along an attaching map $\sigma : \Psi_\2(\av) \to \F$:
\[
\xy
(0,18)*+{\Psi_\2(\alpha)}="W";
(0,0)*+{\Psi_\2(\Id_V) }="X";
(40,0)*+{\Ea}="Y";
(40,18)*+{\F}="E";
{\ar@{->}^-{}"X";"Y"};
{\ar@{->}^-{\Psi_\2(\av)}"W";"X"};
{\ar@{->}^-{\sigma}"W";"E"};
{\ar@{->}^-{\varepsilon}"E";"Y"};
\endxy
\]

Then $\Ea$ is also a \ul{$2$-constant} commutative premonoid such that $\Ea_{\geq \2}$ is strict commutative monoid. Moreover the following hold.
\begin{enumerate}
\item The natural transformation $\varepsilon_{\geq 2}:  \F_{\geq \2} \to \Ea_{\geq \2}$ is an isomorphism in $\Hom(\Phepiop_{\geq \2}, \M)$.
\item In particular we have an isomorphism $\F_{\geq \2} \cong \Ea_{\geq \2}$ of  strict commutative monoids
\end{enumerate}
\end{lem}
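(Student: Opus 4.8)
The plan is to first record the one structural fact that makes everything run, then split the statement into an ``abstract'' part handled by the reflection $|-|$ and a ``concrete'' part that is the real content. Since $\F$ is perfectly $2$-constant, Scholium \ref{schol1} lets me write $\F=h^\star\iota(M)$, where $M:=\F_{\geq\2}$ is the ambient strict commutative monoid and $h=\F(u_\2):\F(\1)\to M$. A short check of the universal property of $|-|$ (a premonoid map into a constant object $\iota(N)$ is determined by its restriction to $\Phepiop_{\geq\2}$, by naturality along $u_\2$) shows that for any perfectly $2$-constant premonoid the unit $\F\to\iota(|\F|)$ is the identity on $\Phepiop_{\geq\2}$ and identifies $|\F|\cong\F_{\geq\2}=M$. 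I would state this as a preliminary observation, since it is what later lets me pass freely between $\varepsilon_{\geq\2}$ and $|\varepsilon|$.

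Next I would dispose of the level-$\1$ bookkeeping and fix a candidate. By Lemma \ref{lem-important-pushout-general} the evaluation $\Ev_\1$ preserves the pushout, and by Proposition \ref{prop-comp-kbi}(1) we have $\Ev_\1(\Psi_\2(\av))\cong\Id_I\sqcup\alpha:I\sqcup U\to I\sqcup V$; hence $\Ea(\1)$ is the pushout of $\Id_I\sqcup\alpha$ along $\sigma_\1$, formed in $\M$. Transposing $\sigma$ through $\Psi_\2\dashv\Ev_{u_\2}$ (Proposition \ref{prop-av}) gives the square of Remark \ref{rmk-important-deux-constant}, whose component at $+$ is a map $V\to\F(\2)=M$; together with $h$ and the unit this induces a canonical $g:\Ea(\1)\to M$. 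My candidate for the pushout is then the perfectly $2$-constant premonoid $\Ea':=g^\star\iota(M)$ of Scholium \ref{schol1}, equipped with the cocone whose leg $\F\to\Ea'$ is the identity on $\Phepiop_{\geq\2}$ and the canonical map at the initial entry.

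The heart of the proof, and the step I expect to be the main obstacle, is verifying that $\Ea'$ really is the pushout, i.e.\ that forming $\Ea$ modifies $\F$ only at the initial entry and leaves $\F_{\geq\2}=M$ untouched. This is not formal: colimits in $\coms$ are \emph{not} computed level-wise away from $\1$ (the free-monoid data is built up slowly in the higher terms, as stressed in the introduction), so a priori the generator $V$ adjoined at level $\1$ could create new elements at every level $\n\geq\2$ through the laxity products. The point is that $(\av)_+=\Id_V$ is invertible: at each level $\n\geq\2$ the map $\Psi_\2(\av)$ already identifies every such would-be new product with an element of the strict commutative monoid $M$, where products close up, so no new higher data survives. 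I would make this precise by checking the universal property of the pushout against $\Ea'$, transposing the two non-trivial legs through $\Psi_\2\dashv\Ev_{u_\2}$ and $\Fb^\1\dashv\Ev_\1$ and using the cube description of Remark \ref{rmk-important-deux-constant}; this reduces the cocone condition on a test object $\G$ to a pair consisting of a monoid map $M\to\G_{\geq\2}$ and a map $\Ea(\1)\to\G(\1)$ compatible with $g$, which is exactly $\Hom_{\coms}(\Ea',\G)$. The invertibility of $(\av)_+$ is the only place where that hypothesis enters, and it is precisely what collapses the $\geq\2$ part of the comparison.

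Granting $\Ea\cong\Ea'=g^\star\iota(M)$, the three conclusions are immediate. The premonoid $\Ea$ is perfectly $2$-constant with $\Ea_{\geq\2}=\iota(M)=M$ a strict commutative monoid, which gives the ``moreover'' clause. Under the identification $\Ea_{\geq\2}=M=\F_{\geq\2}$ the leg $\varepsilon:\F\to\Ea$ corresponds to the identity on $\Phepiop_{\geq\2}$, so $\varepsilon_{\geq\2}:\F_{\geq\2}\to\Ea_{\geq\2}$ is an isomorphism in $\Hom(\Phepiop_{\geq\2},\M)$, which is assertion (1); assertion (2) is the same isomorphism read off in $\com$. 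As a cross-check, and as an alternative to the explicit leg computation, one can instead note that $|-|$ preserves the pushout while $|\Psi_\2(\av)|$ is an isomorphism by Proposition \ref{prop-comp-kbi}(2) and Remark \ref{rmk-projec-com}, so $|\varepsilon|:|\F|\to|\Ea|$ is an isomorphism; by the preliminary identification $|\F|\cong\F_{\geq\2}$ and $|\Ea|\cong\Ea_{\geq\2}$ this recovers $\varepsilon_{\geq\2}$.
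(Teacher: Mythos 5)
Your proposal is correct and follows essentially the same route as the paper: build the candidate at the initial entry as a pushout in $\M$, extract the induced map $g$ (the paper's $\gamma$) to $M=\F_{\geq\2}$ by the universal property, form the perfectly $2$-constant premonoid $g^\star\iota(M)$ via Scholium \ref{schol1}, and verify the universal property of the pushout by transposing test cocones through the adjunctions of Remark \ref{rmk-important-deux-constant}; your closing cross-check via $|-|$ preserving pushouts and $|\Psi_\2(\av)|$ being an isomorphism is also the observation the paper records before its explicit construction. The only cosmetic differences are that you first identify $\F$ itself as $h^\star\iota(M)$ and compute $\Ea(\1)$ via $\Ev_\1$-preservation of pushouts rather than directly as the pushout of $\alpha$ along the attaching map $q:U\to\F(\1)$, which amounts to the same thing.
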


\begin{proof}
We will construct the $2$-constant diagram $\Ea$ and show that it satisfies the universal property of the pushout. For simplicity we will denote by $|\F|(\1)= \F_{\geq 2}(\1)= \F_{\geq 2}(\n)$. This is the underlying object of the strict commutative monoid  defined by $\F_{\geq 2}$. \\ 

First observe that since $\F$ is $2$-constant, the map $$\F(u_\2):\F(\1) \to \underbrace{\F(\2)}_{=|\F|(\1)}$$ is simultaneously part of $\F$ and is the canonical map $\F \to \F_{\geq \2}$ that connects $\F$ and $\F_{\geq \2}$.\\

 Technically  it's not hard to see why the lemma holds. Indeed we know that the $|\Psi_\2(\av)|$ is an isomorphism and the functor $|-|: \coms \to \com$ is a left adjoint so it preserves any kind of colimits. In particular it preserve pushouts. 

It follows that if we project the pushout square in $\com$ through $|-|$, we get also a pushout square there. In particular  the morphism $|\varepsilon|:|\F| \to |\Ea|$ is the pushout of the isomorphism  $|\Psi_\2(\av)|$, therefore it's an isomorphism. Now we have an isomorphism $|\F| \cong \F_{\geq \2}$ and we shall see that we also have an isomorphism $|\Ea| \cong \Ea_{\geq \2}$.
\begin{claim}
The idea of the proof is that taking the pushout is equivalent to factoring this map through a pushout of $\alpha$.\\
\end{claim}

Consider the attaching map  $\sigma: \Psi_\2(\alpha) \to \F$. By adjointness, this map corresponds to a unique morphism in $\M^{\Un}$, which in turn corresponds to a commutative square in $\M$: 

\begin{equation}\label{attach-push-e-av}
\xy
(0,18)*+{U}="W";
(0,0)*+{V}="X";
(30,0)*+{|\F|(\1)}="Y";
(30,18)*+{\F(\1)}="E";
{\ar@{->}^-{p}"X";"Y"};
{\ar@{->}^-{\alpha}"W";"X"};
{\ar@{->}^-{q}"W";"E"};
{\ar@{->}^-{\F(u_\2)}"E";"Y"};
\endxy
\end{equation}
Define $\Ea(\1)$ as the object we get when we take the pushout of $\alpha$ along $q$:
\begin{equation}\label{push-alpha-q}
V \xleftarrow{\alpha} U \xrightarrow{q} \F(\1).
\end{equation}
Let $\varepsilon:\F(\1) \to \Ea(\1)$ and $i_V: V \to \Ea(\1)$ be the canonical maps. The map $\varepsilon:\F(\1) \to \Ea(\1)$ is by definition the pushout of the map $\alpha$ along $q$.\\

If we use the universal property of the pushout with respect to the commutative square \eqref{attach-push-e-av} above, we find a unique map  $\gamma: \Ea(\1) \to |\F|(\1)$ such that the factorizations hereafter hold.
\begin{equation}\label{fact-f-us}
\F(u_\2)=\gamma \circ \varepsilon;
\end{equation}
\begin{equation}\label{fact-p}
 p= \gamma \circ i_V.
\end{equation}

Let's denote by $h:\Ea(\1) \to |\F|(\1)$ be the universal map $\gamma$ above.\\

Define $\Ea \in \coms$ to be the commutative premonoid obtained with the construction described in Scholium \ref{schol1} applied to the usual strict commutative monoid $\F_{\geq 2}$ with respect to the map $h$.\\

We have the following characteristics.
\begin{itemize}[label=$-$]
\item We have $ \Ea_{\geq 2}= \F_{\geq 2},$.
\item The map  $\F_{\geq 2} \to \Ea_{\geq 2}$ is the identity.
\item By definition the map $\Ea(u_\2)$ is just $\gamma=h$
\item The structure map $\Ea(u_\2)$ is simultaneously part of $\Ea$ and is the component of the canonical map that comes with the construction in Scholium \ref{schol1}:
$$\underbrace{h^{\star}\F_{\geq \2}}_{=\Ea} \to \F_{\geq \2}.$$
\item Finally, it's important to notice that the identity map $\F_{\geq 2} \to \Ea_{\geq 2}$ extends to a map $\Upsilon: \F \to \Ea$, whose component at the entry $\1$ is  the map $\varepsilon: \F(\1) \to \Ea(\1)$, which is the pushout of $\alpha$ along $q$. The component of $\Upsilon$ at every other entry is the identity map.
\end{itemize} 

If we incorporate the pushout object $\Ea(\1)$ in the diagram   \eqref{attach-push-e-av}, we find that the map $\Upsilon: \F \to \Ea$ fits in the following commutative diagram.
\begin{equation}\label{expand-univ-push-av}
\xy
(0,22)*+{U}="W";
(0,0)*+{V}="X";
(35,0)*+{|\F|(\1)}="Y";
(35,22)*+{\F(\1)}="E";
(15,10)*+{\Ea(\1)}="F";
{\ar@{->}_-{p}"X";"Y"};
{\ar@{->}_-{\alpha}"W";"X"};
{\ar@{->}^-{q}"W";"E"};
{\ar@{->}^-{\F(u_\2)}"E";"Y"};
{\ar@{.>}^-{\varepsilon}"E";"F"};
{\ar@{.>}^-{i_V}"X";"F"};
{\ar@{.>}^-{\gamma}"F";"Y"};
\endxy
\Longleftrightarrow
\xy
(0,22)*+{U }="A";
(25,22)*+{\F(\1)}="B";
(0,0)*+{V}="C";
(25,0)*+{|\F|(\1)}="D";
(60,22)*+{\Ea(\1)}="E";
(60,0)*+{|\F|(\1)=\Ea(\2)}="F";
{\ar@{->}^-{q}"A";"B"};
{\ar@{->}_-{\alpha}"A"+(0,-3);"C"};
{\ar@{->}_-{p}"C"+(3,0);"D"};
{\ar@{->}^-{}_{}"B"+(0,-3);"D"};
{\ar@{->}^-{\Upsilon=\varepsilon}"B";"E"};
{\ar@{->}_-{\Id}^-{\Upsilon}"D";"F"};
{\ar@{->}^-{\gamma=\Ea(u_\2)}"E";"F"};
{\ar@{.>}^-{}"C";"E"};
\endxy
\end{equation}
 
Now if we look at the right hand side of this diagram, then following Remark \ref{rmk-important-deux-constant}, we get by adjointness, a commutative square in $\coms$:
\[
\xy
(0,18)*+{\Psi_\2(\alpha)}="W";
(0,0)*+{\Psi_\2(\Id_V) }="X";
(40,0)*+{\Ea}="Y";
(40,18)*+{\F}="E";
{\ar@{->}^-{}"X";"Y"};
{\ar@{->}^-{\Psi_\2(\av)}"W";"X"};
{\ar@{->}^-{\sigma}"W";"E"};
{\ar@{->}^-{\Upsilon}"E";"Y"};
\endxy
\]

The rest of the proof is to show that this is the universal commutative square i.e, that $\Ea$ equipped with the maps above satisfies the universal property of the pushout. \\

Let $\G$ be an arbitrary commutative premonoid equipped with a copushout data $ \Psi_\2(\Id_V) \xrightarrow{\chi} \G \xleftarrow{\theta} \F$ that completes $$ \Psi_\2(\Id_V) \xleftarrow{\Psi_\2(\av)} \Psi_\2(\alpha) \xrightarrow{\sigma} \F$$
into a commutative square. Then following Remark \ref{rmk-important-deux-constant}, having such commutative square is uniquely equivalent to having the commutative square below and  a lifting. 
\[
\xy
(0,20)*+{U }="A";
(25,20)*+{\F(\1)}="B";
(0,0)*+{V}="C";
(25,0)*+{|\F|(\1)}="D";
(50,20)*+{\G(\1)}="E";
(50,0)*+{\G(\2)}="F";
{\ar@{->}^-{q}"A";"B"};
{\ar@{->}_-{\alpha}"A"+(0,-3);"C"};
{\ar@{->}_-{p}"C"+(3,0);"D"};
{\ar@{->}^-{}_{}"B"+(0,-3);"D"};
{\ar@{->}^-{}"B";"E"};
{\ar@{->}_-{}^-{}"D";"F"};
{\ar@{->}^-{}"E";"F"};
{\ar@{.>}^-{}"C";"E"};
\endxy
\] 

Since the upper half triangle that ends at $\G(\1)$ is commutative, the universal property of the pushout of $\alpha$ along $q$, gives a \emph{unique map} $\zeta: \Ea(\1) \to \G(\1)$ that satisfies the usual factorizations. 

Another application of the universal property of the pushout with respect to the whole commutative square that ends at  $\G(\2)$, gives a unique map $\rho: \Ea(\1) \to \G(\2)$ that satisfies the usual factorizations. But the maps $\theta \circ \gamma$ and $\G(u_\2) \circ \zeta$ both solve for $\rho$ the required factorizations, therefore by uniqueness of $\rho$ we have an equality:
$$ \theta \circ \gamma= \G(u_\2) \circ \zeta.$$ 

The above facts can be summarized by saying that everything commutes in the  following diagram.
\begin{equation}\label{whole-diag-com}
\xy
(0,20)*+{\F(\1) }="A";
(25,20)*+{\Ea(\1)}="B";
(0,0)*+{|\F|(\1)}="C";
(25,0)*+{|\F|(\1)}="D";
(50,20)*+{\G(\1)}="E";
(50,0)*+{\G(\2)}="F";
{\ar@{->}^-{}"A";"B"};
{\ar@{->}_-{\F(u_\2)}"A"+(0,-3);"C"};
{\ar@{->}_-{\Id}"C";"D"};
{\ar@{->}^-{\Ea(u_\2)}_{}"B"+(0,-3);"D"};
{\ar@{->}^-{\zeta}"B";"E"};
{\ar@{->}_-{}^-{}"D";"F"};
{\ar@{->}^-{}"E";"F"};
(-25,20)*+{U}="U";
(-25,0)*+{V}="V";
{\ar@{->}_-{\alpha}"U"+(0,-3);"V"};
{\ar@{->}_-{p}"V";"C"};
{\ar@{->}^-{q}"U";"A"};
{\ar@{.>}_-{}"V";"E"};
{\ar@{.>}_-{}"V";"B"};
\endxy
\end{equation}

Since $\Ea=\F$ everywhere except at the entry $(\1)$, we see that  the map $\zeta$ and the data for the map $\theta : \F \to \G$ determine  a unique map that we denote again by $ \zeta: \Ea \to \G$. This map gives the factorization:
\begin{equation}\label{first-fact-upsilon}
\theta= \zeta \circ \Upsilon.
\end{equation}

Note also that the lifting $V \to \G(\1)$ is precisely the composite $\zeta \circ i_V$, where $i_V: V \to \Ea(\1)$ is the lifting for the square associated to $\Ea$. It follows that if we look at the diagram \eqref{whole-diag-com} in the arrow category $\M^{\Un}$, we get a commutative diagram:

\begin{equation}\label{diag-av-arrow-proof}
\xy
(0,22)*+{\alpha}="W";
(0,0)*+{\Id_V }="X";
(40,0)*+{\G(u_\2)}="Y";
(40,22)*+{\F(u_\2)}="E";
(20,10)*+{\Ea(u_\2)}="F";
{\ar@{->}^-{}"X";"Y"};
{\ar@{->}^-{\av}"W";"X"};
{\ar@{->}^-{\sigma}"W";"E"};
{\ar@{->}^-{\theta}"E";"Y"};
{\ar@{.>}^-{\Upsilon}"E";"F"};
{\ar@{.>}^-{}"X";"F"};
{\ar@{.>}^-{\zeta}"F";"Y"};
\endxy
\end{equation}

Now the uniqueness of the adjunct map implies that the given map $\Psi_\2(\Id_V) \xrightarrow{\chi} \G$ is the composite of the canonical map $\Psi_\2(\Id_V) \to \Ea$ and $\zeta$. Putting this together with the previous factorization \eqref{first-fact-upsilon}, we see that $\Ea$ satisfies the universal property of the pushout and the lemma follows.
\end{proof}

The next lemma tells us how to calculate the wide pushout of maps  $\F \to \Ea$ such as the one we've just constructed. 
\begin{lem}\label{lem-wide pushouts-deux-constants}
Let $\{\Upsilon_i: \F \to \Ea_i \}_{i \in S}$ be a small family of morphisms between $2$-constant commutative premonoids in $\coms$. Assume that each morphism 
$\Upsilon_i: \F \to \Ea_i$ is such that the induced morphism $\Upsilon_{i,\geq 2}: \F_{\geq 2} \to \Ea_{i,\geq 2} $ is an isomorphism of usual commutative monoids.\\

Let $\Ea_{\infty}$ be the wide pushout in $\coms$ of the maps $\Upsilon_i$. Then the following hold.
\begin{enumerate}
\item $\Ea_\infty$ is also a $2$-constant commutative premonoid.
\item Each canonical map  $\Ea_i \to \Ea_\infty$  and $ \F \to \Ea_\infty$ induces an isomorphism of strict commutative monoids.  
$$\Ea_{i,\geq 2}\xrightarrow{\cong} \Ea_{\infty,\geq 2}, \quad \F_{\geq 2}  \xrightarrow{\cong} \Ea_{\infty,\geq 2}$$
\end{enumerate}
 \end{lem}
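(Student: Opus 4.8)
The plan is to compute the wide pushout \emph{by hand} as a $2$-constant premonoid, in direct analogy with the pushout built in Lemma \ref{lem-deux-fondamental}, and then to verify its universal property. First I would reduce to a convenient normal form: since each $\Upsilon_{i,\geq \2}\colon \F_{\geq \2}\to \Ea_{i,\geq \2}$ is an isomorphism, I may transport the $2$-constant structure of $\Ea_i$ along $(\Upsilon_{i,\geq \2})^{-1}$. This replaces $\Ea_i$ by an isomorphic object, hence does not alter the wide pushout up to isomorphism, and lets me assume $\Ea_{i,\geq \2}=M:=\F_{\geq \2}$ with $\Upsilon_{i,\geq \2}=\Id_M$ for every $i$. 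Thus each $\Ea_i$ is of the form $g_i^{\star}\iota(M)$ in the sense of Scholium \ref{schol1}, where $g_i=\Ea_i(u_\2)\colon \Ea_i(\1)\to \Ub(M)$, and $\Upsilon_i$ is the identity outside the initial entry while $g_i\circ \Upsilon_{i,\1}=\F(u_\2)$.

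Next I would build the candidate. Let $P$ be the wide pushout in $\M$ of the family $\{\Upsilon_{i,\1}\colon \F(\1)\to \Ea_i(\1)\}$, with canonical maps $\F(\1)\to P$ and $\Ea_i(\1)\to P$. Because the $g_i$ all restrict to $\F(u_\2)$ on $\F(\1)$, the universal property of $P$ yields a unique $h_\infty\colon P\to \Ub(M)$ through which each $g_i$ factors, and composing the unit of $\F(\1)$ with $\F(\1)\to P$ supplies a map $I\to P$ over $h_\infty$. I then set $\Ea_\infty:=h_\infty^{\star}\iota(M)$, which by Scholium \ref{schol1} is automatically a symmetric monoidal normal lax functor satisfying the unitality condition; by construction it is $2$-constant with $(\Ea_\infty)_{\geq \2}=M$ a strict commutative monoid. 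The cocone maps $\Ea_i\to \Ea_\infty$ are $\Ea_i(\1)\to P$ at the entry $\1$ and $\Id_M$ at every higher entry. This gives assertion $(1)$ once universality is checked.

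To verify that $\Ea_\infty$ is the wide pushout I would mimic Lemma \ref{lem-deux-fondamental}: given a compatible cocone $\{\psi_i\colon \Ea_i\to \G\}$ inducing a common $\psi\colon \F\to \G$, the components $\psi_{i,\geq\2}$ all coincide with $\psi_{\geq\2}$ (since $\Upsilon_{i,\geq\2}=\Id_M$), so they assemble into a single map on the $\geq\2$ part, while the maps $\psi_{i,\1}$ agree on $\F(\1)$ and factor uniquely through $P$. That the factorization lives at the entry $\1$ is consistent with $\Ev_\1=[\coms\to (I\downarrow \M)\to \M]$ preserving this connected colimit, because $\coms\to(I\downarrow\M)$ is a left adjoint by Proposition \ref{property-of-adj-m-to-coms} and $(I\downarrow\M)\to \M$ creates pushouts by Lemma \ref{lem-important-pushout}. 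The main obstacle I anticipate is precisely here: checking that the assembled pair of partial maps really defines a \emph{morphism in $\coms$}, i.e.\ is compatible with the laxity maps $m\otimes \F(\n)\to \F(\1+\n)$, rather than being merely objectwise. This is the same delicate point as in Lemma \ref{lem-deux-fondamental}, and the identical diagram chase with the laxity data furnished by Scholium \ref{schol1} should resolve it, after which uniqueness is routine.

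For assertion $(2)$ I would argue formally. The reflection $|-|\colon \coms\to \com$ is a left adjoint, hence preserves the wide pushout, so $|\Ea_\infty|$ is the wide pushout in $\com$ of the maps $|\Upsilon_i|\colon |\F|\to|\Ea_i|$. Under the identification $|-|\cong(-)_{\geq\2}$ on $2$-constant premonoids, already used in Lemma \ref{lem-deux-fondamental}, each $|\Upsilon_i|$ is the isomorphism $\Upsilon_{i,\geq\2}$; and the wide pushout of a family of isomorphisms out of a common apex has all of its canonical maps invertible. Combining this with assertion $(1)$ gives $\F_{\geq\2}\xrightarrow{\cong}(\Ea_\infty)_{\geq\2}$ and $\Ea_{i,\geq\2}\xrightarrow{\cong}(\Ea_\infty)_{\geq\2}$, as claimed.
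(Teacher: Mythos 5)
Your proposal is correct and follows essentially the same route as the paper: the paper also builds $\Ea_\infty$ by taking the (iso) wide pushout of the $\Upsilon_{i,\geq\2}$ for the part above degree $\2$ and the wide pushout of the arrows $\F(u_\2)\to\Ea_i(u_\2)$ in $\M^{\Un}$ for the initial entry (componentwise, exactly your $P$ and $h_\infty$), then invokes the construction of Scholium \ref{schol1} to get the premonoid structure and leaves the universal-property check to the reader. Your preliminary normalization $\Upsilon_{i,\geq\2}=\Id_M$ and the detour through $|-|$ for assertion $(2)$ are harmless additions; the delicate point you flag (compatibility of the assembled map with the laxity data) is the same one the paper dispatches as a routine verification.
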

\begin{proof}
Take $\Ea_{\infty,\geq 2}$ to be the object obtained by taking the wide pushout of the isomorphisms $\Upsilon_{i,\geq 2}: \F_{\geq 2} \to \Ea_{i,\geq 2}$. Clearly the canonical maps $ \Ea_{i,\geq 2}\xrightarrow{\cong} \Ea_{\infty,\geq 2}$ and $\F_{\geq 2} \xrightarrow{\cong} \Ea_{\infty,\geq 2}$ are isomorphisms. 

One gets the initial entry 
$\Ea_{\infty}(\1)$ together with the unique map $\Ea_{\infty}(\1) \xrightarrow{} \Ea_{\infty,\geq 2}(\1)$ by taking the wide pushout of $\F(u_\2) \to \Ea_i(u_\2)$ in the arrow category $\M^{\Un}$. Just like before $\Ea_\infty$ is also a commutative premonoid. The canonical maps $\Ea_{i,\geq 2}\xrightarrow{\cong} \Ea_{\infty,\geq 2}$ and $\F_{\geq 2} \xrightarrow{\cong} \Ea_{\infty,\geq 2}$ extend to morphisms in $\coms$: 
$$\Ea_i \to \Ea_\infty ; \quad \F \to \Ea_\infty.$$

Moreover for each $i$, the canonical  map  $\F \to \Ea_\infty$ is the composite of  $\F \to \Ea_i$ and $\Ea_i \to \Ea_\infty$. This means that we have a natural cocone that ends at $\Ea_\infty$. The reader can easily check that this cocone is the universal one i.e, $\Ea_\infty$ equipped with this cocone satisfies the universal property of the wide pushout.
\end{proof}
\subsubsection{Analysis of the $\kb_2$-injective replacement functor}
\begin{pdef}\label{pdef-deux-constant}
Let $\Sim_2: \coms \to \coms$ be the $\kb_2$-injective replacement functor obtained by the small object argument. Denote by $\tau: Id \to \Sim$ the induced natural transformation.\\

Let  $\F$ be a  $2$-constant commutative premonoid such that $\F_{\geq 2}$ is a strict  usual commutative  monoid. Then the following hold.

\begin{enumerate}
\item $\Sim(\F)$ is also a  $2$-constant commutative premonoid such that $\Sim(\F)_{\geq 2}$ is  usual commutative  monoid.  The map $\tau_{\geq 2}: \F_{\geq 2} \to \Sim(\F)_{\geq 2}$ is an isomorphism of usual commutative  monoids.
\item $\Sim(\F)$ satisfies the co-Segal conditions. And the canonical map $\Sim(\F) \to |\Sim(\F)|$ is an easy weak equivalence. Moreover we have an isomorphism $|\Sim(\F)| \cong \Sim(\F)_{\geq 2}$.
\item Let $L: \coms\to  \Ba$ be a functor that sends easy weak equivalences to isomorphisms and takes any $\kb_2$-cell complex to an isomorphism. Then for all $\F \in \coms$ not necessarily $2$-constant, the image by $L$ of the unit $\eta: \F \to \iota(|\F|)$ is an isomorphism in $\Ba$. 
\end{enumerate}
The functor $\Sim$ will be called the \emph{ $2$-constant co-Segalification functor}. 
\end{pdef}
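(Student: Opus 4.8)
The plan is to establish the three assertions in order. The first two are an organized application of the single-cell pushout analysis of Lemmas \ref{coproduc-pushout}, \ref{lem-deux-fondamental} and \ref{lem-wide pushouts-deux-constants}, together with the injectivity characterization of Lemma \ref{k-inj-cosegal}; the third is the real payoff, where the factorization of Definition \ref{rmk-associated-deux-constant} is combined with the naturality of $\tau$.

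For assertion (1), I would argue by transfinite induction along the stages of the small object argument computing $\Sim(\F)$. The starting object $\F$ is perfectly $2$-constant by hypothesis. At a successor stage one forms the pushout of a coproduct of generators of $\kb_2$ along an attaching map into the current (perfectly $2$-constant) premonoid; by Lemma \ref{coproduc-pushout} this is the wide pushout of the individual single-cell pushouts, each of which is again perfectly $2$-constant and restricts to an isomorphism on $\Phepiop_{\geq \2}$ by Lemma \ref{lem-deux-fondamental}. Lemma \ref{lem-wide pushouts-deux-constants} then reassembles them into a perfectly $2$-constant premonoid whose canonical maps are isomorphisms on $\Phepiop_{\geq \2}$. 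At a limit stage one takes a filtered colimit, which by Proposition \ref{closure-limit-filt} is computed level-wise in $\M$; its restriction to $\Phepiop_{\geq \2}$ is thus a filtered colimit of a system of isomorphisms of strict commutative monoids, hence again (isomorphic to) a single strict commutative monoid, so both $2$-constancy and the isomorphism statement persist. Composing all the isomorphisms produced along the way yields that $\tau_{\geq 2}\colon \F_{\geq \2}\to \Sim(\F)_{\geq \2}$ is an isomorphism of strict commutative monoids.

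For assertion (2), note that $\Sim(\F)$ is $\kb_2$-injective by construction, so the adjointness argument of Lemma \ref{k-inj-cosegal} applied at $\n=\2$ makes $\Sim(\F)(u_\2)\colon \Sim(\F)(\1)\to \Sim(\F)(\2)$ a trivial fibration. Because $\Sim(\F)$ is $2$-constant, for every $\n\geq \2$ the map $\Sim(\F)(u_\n)$ coincides with $\Sim(\F)(u_\2)$ up to the identity structure maps on $\Phepiop_{\geq \2}$, hence is also a trivial fibration; so $\Sim(\F)$ is $\kb(\I)$-injective and, by Lemma \ref{k-inj-cosegal}, a co-Segal commutative monoid. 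Since $\Sim(\F)$ is perfectly $2$-constant we have $|\Sim(\F)|\cong \Sim(\F)_{\geq \2}$ (as already used inside the proof of Lemma \ref{lem-deux-fondamental}), and under this identification the unit $\Sim(\F)\to \iota(|\Sim(\F)|)$ is the identity outside the entry $\1$ and equals $\Sim(\F)(u_\2)$ at $\1$; being a weak equivalence at $\1$, it is an easy weak equivalence.

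Assertion (3) is the delicate one. By the fundamental factorization $\eta=\epsilon\circ\rho$ of Definition \ref{rmk-associated-deux-constant} and Proposition \ref{prop-equiv-constant}, where $\rho$ is an easy weak equivalence and $G:=h^\star\iota(|\F|)$ is perfectly $2$-constant with $G_{\geq \2}=|\F|$, the functor $L$ sends $\rho$ to an isomorphism, so it suffices to show $L(\epsilon)$ is an isomorphism for $\epsilon\colon G\to \iota(|\F|)$. I would then apply $\tau$ to $\epsilon$, obtaining a naturality square whose vertical legs $\tau_G$ and $\tau_{\iota(|\F|)}$ are $\kb_2$-cell complexes, hence sent by $L$ to isomorphisms; consequently $L(\epsilon)$ is invertible if and only if $L(\Sim(\epsilon))$ is. To finish, I claim $\Sim(\epsilon)$ is an easy weak equivalence: since $\epsilon$ is an isomorphism on $\Phepiop_{\geq \2}$, naturality of $\tau$ and assertion (1) force $\Sim(\epsilon)_{\geq \2}$, in particular $\Sim(\epsilon)_\2$, to be an isomorphism, while assertion (2) supplies the two trivial fibrations $\Sim(G)(u_\2)$ and $\Sim(\iota(|\F|))(u_\2)$; the $3$-for-$2$ property for weak equivalences in $\M$ applied to the resulting commuting square then makes $\Sim(\epsilon)_\1$ a weak equivalence. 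Thus $L(\Sim(\epsilon))$, and therefore $L(\eta)$, is an isomorphism. The main obstacle I anticipate is the bookkeeping of assertion (1) at limit ordinals — confirming that $2$-constancy and the strict-monoid structure on the $\Phepiop_{\geq \2}$-part genuinely survive transfinite filtered colimits — and, in (3), pinning down exactly why $\Sim(\epsilon)$ is a level-$\1$ weak equivalence, which is the step that converts the formal factorization into an honest invertibility statement under $L$.
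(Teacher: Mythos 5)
Your proof is correct and follows essentially the same strategy as the paper: parts (1) and (2) invoke Lemmas \ref{coproduc-pushout}, \ref{lem-deux-fondamental}, \ref{lem-wide pushouts-deux-constants} and the injectivity characterization in the same way (the paper is terser about limit stages, simply noting that $2$-constant premonoids are closed under directed colimits), and part (3) rests on the same fundamental factorization together with the facts that $L$ inverts easy weak equivalences and $\kb_2$-cell complexes. The only real variation is in (3): the paper writes the composite $\F \xrightarrow{\rho} h^\star \iota(|\F|) \xrightarrow{\tau} \Sim[h^\star \iota(|\F|)] \xrightarrow{\sim} \Sim[h^\star \iota(|\F|)]_{\geq 2} \xrightarrow{\cong} \iota(|\F|)$ and inverts each factor directly using assertions (1) and (2), whereas you run a naturality square of $\tau$ against $\epsilon$ and a $3$-for-$2$ argument at level $\1$ --- both routes use exactly the same ingredients and are equally valid.
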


\begin{proof}
The full subcategory of $2$-constant precategories is closed under directed colimits (and limits) since they are computed level-wise. Thanks to Lemma \ref{lem-deux-fondamental}, we know that if $\F$ is $2$-constant, then the pushout of any $\Psi_\2(\av)$ along any  $\Psi_\2(\alpha) \to \F$ is a morphism of $2$-constant commutative premonoids which is moreover an isomorphism on the \emph{underlying categories}.\\

Given any pushout data defined by a coproduct of maps in $\kb_2$
$$(\coprod \Psi_\2(\Id_V)) \xleftarrow{\coprod \Psi_\2(\av)} (\coprod \Psi_\2(\alpha)) \to \F, $$
we know thanks to  Lemma \ref{lem-wide pushouts-deux-constants} and Lemma \ref{coproduc-pushout}  that the canonical map $\F \to \F_1$ toward the pushout-object is again a map of $2$-constant commutative premonoids. Moreover the induced map $|\F| \to |\F_1|$ is an isomorphism of  usual commutative  monoids.  But these pushouts are precisely the ones we use to construct $\Sim(\F)$.\\ 

Therefore  $\Sim(\F)$ is a $2$-constant commutative premonoid as a directed colimit of $2$-constant commutative premonoids;  and the map 
$ \eta: \F \to \Sim(\F)$ is a  $\kb_2$-cell complex with the property that the induced map $|\eta|: |\F| \to |\Sim(\F)|$ is an isomorphism of  usual commutative  monoids. This proves Assertion $(1)$. \\

Assertion $(2)$ follows from the fact that $\Sim(\F)$ in $\kb_2$-injective, and by adjointness this means that the unique map $\Sim (\F)(\1) \to |\Sim(\F)|(\1)$, viewed as an object of $\M^{\Un}$, is $\av$-injective for all generating cofibration $\alpha$.\\
But this in turn simply means that we have a lifting to any problem defined by $\alpha$ and  $\Sim (\F)(\1) \to |\Sim(\F)|(\1)$ (see Proposition \ref{lem-lifting-inject}). Consequently $\Sim(\1) \to |\Sim|(\1)$ is a trivial fibration, in particular a weak equivalence, therefore $\Sim(\F)$ is a co-Segal commutative monoid. This also proves at the same time that the canonical map $\Sim(\F) \to |\Sim(\F)|$, whose component at $\1$ is exactly $\Sim (\F)(\1) \to |\Sim(\F)|(\1)$ is an easy weak equivalence.\\
 
Now for Assertion $(3)$  it suffices to use the factorization $\F \to \iota(|\F|)$ given in Definition \ref{rmk-associated-deux-constant} and observe that we can factor  again $\F \to \iota(|\F|)$ as follows.

$$\F \xrightarrow{\rho} h^\star \iota(|\F|)  \xrightarrow{\tau} \Sim[h^\star \iota(|\F|)] \xrightarrow{\sim}  \Sim[h^\star \iota(|\F|)]_{\geq 2} \xrightarrow{\cong} \iota(|\F|).$$

Since $L$ sends every $\kb_2$-cell complex to an isomorphism,  then $L$ sends  the map $\tau$  to an isomorphism in $\Ba$. The map  $\Sim[h^\star \iota(|\F|)] \xrightarrow{\sim}  \Sim[h^\star \iota(|\F|)]_{\geq 2}$ is an easy weak equivalence by the previous assertion we've just proved. The map $\rho$ is always an easy weak equivalence thanks to Proposition \ref{prop-equiv-constant}. In the end we find that the image of  $\F \to |\F|$ by $L$ is also an isomorphism.
\end{proof}

\section{New model structure on commutative premonoids}\label{section-new-model-str}

\subsection{Enlarging the cofibrations}

The discussion that follows is motivated by the desire to have a combinatorial left proper model structure on $\coms$ such that the set of generating cofibrations contains the localizing set $\kb(\I)$ introduced above.
\begin{nota}
\begin{enumerate}
\item Denote by $\I_{\coms}$ the generating set for the cofibrations in the model category $\comse$ of Theorem \ref{easy-model-coms}.
\item Denote by $\I_{\coms}^{+}$ the set:
$$\I_{\coms} \bigsqcup \kb(\I).$$
\end{enumerate}
\end{nota}

The following result is just an observation.

\begin{lem}\label{petit-lem-cofib}
For all $\sigma \in \I_{\coms}^{+}$ the component  $\sigma_{\1}$ is a cofibration in $\M$
\end{lem}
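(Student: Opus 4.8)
The plan is to exploit the fact that $\I_{\coms}^{+}$ is by definition the disjoint union $\I_{\coms} \sqcup \kb(\I)$, and to treat each of the two pieces separately using results already established. In both cases the strategy is the same: identify the component $\sigma_{\1}$ up to isomorphism with a coproduct of the form $\Id_I \sqcup \alpha$, where $\alpha \in \I$ is a generating cofibration of $\M$, and then conclude that such a coproduct is a cofibration in $\M$ because the class of cofibrations in any model category is closed under coproducts (and $\Id_I$, being an identity, is a cofibration).

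First I would handle the case $\sigma \in \I_{\coms}$. Since $\I_{\coms} = \Gamma\Fb^{\1}(\I)$ by Theorem \ref{easy-model-coms}, any such $\sigma$ is of the form $\Gamma\Fb^{\1}_{\alpha}$ for some $\alpha \in \I$. Proposition \ref{property-of-adj-m-to-coms}, Assertion $(5)$, then tells us directly that the component at $\1$ is isomorphic to $\Id_I \sqcup \alpha : I \sqcup U \to I \sqcup V$, which is a cofibration in $\M$. Next I would handle the case $\sigma \in \kb(\I)$. Here every element is of the form $\Psi_\n(\av)$ with $\alpha \in \I$ and $\n \geq \2$, and Proposition \ref{prop-comp-kbi}, Assertion $(1)$, states precisely that the component $\sigma_{\1}$ is a cofibration in $\M$, again isomorphic to the coproduct $\Id_I \sqcup \alpha$.

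There is essentially no obstacle here: the lemma is, as the text remarks, just an observation that collects together Proposition \ref{property-of-adj-m-to-coms} and Proposition \ref{prop-comp-kbi}. The only thing worth stating explicitly is the closure of cofibrations under coproducts, which guarantees that $\Id_I \sqcup \alpha$ is a cofibration in either case; since both generating families produce components of exactly this shape, the claim for all $\sigma \in \I_{\coms}^{+}$ follows immediately.
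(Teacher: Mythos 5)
Your proof is correct and follows essentially the same route as the paper: split $\I_{\coms}^{+}$ into $\I_{\coms}$ and $\kb(\I)$, identify the component at $\1$ as $\Id_I \sqcup \alpha$ in each case, and invoke closure of cofibrations under coproducts. The only difference is that you spell out the $\I_{\coms}$ case via Proposition \ref{property-of-adj-m-to-coms}(5), where the paper simply calls it clear.
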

\begin{proof}
The statement is clear if $\sigma \in \I_{\coms}$. If $\sigma= \Psi_\n(\av) \in \kb(\I)$,with $\alpha \in \I$,  the component $\sigma_{\1}$ is $\Id_I \coprod \alpha$ (see Proposition \ref{prop-comp-kbi}).
\end{proof}
\subsection{The model structure}
We show below that there is a left proper combinatorial model structure on $\coms$ with $\I_{\coms}^{+}$  as the set of generating cofibrations and $\W_{\comse}$ as the class of weak equivalences.\\

We use Smith's recognition Theorem for combinatorial model categories (see for example Barwick \cite[Proposition 2.2]{Barwick_localization}). This theorem gives the possibility to  construct a combinatorial model category out of two data consisting of a class $\W$ of morphisms whose elements are called \emph{weak equivalences}; and a set $\I$ of \emph{generating cofibrations}.\\

Our method is classical and the argument is present in Pellissier's PhD thesis \cite{Pel}; it is also used by Lurie \cite{Lurie_HTT}, Simpson \cite{Simpson_HTHC} and others. But in doing so, we actually reprove (implicitly) a derived version of Smith's theorem that has been outlined by Lurie \cite[Proposition A.2.6.13]{Lurie_HTT}. This version asserts that the resulting combinatorial model structure is automatically left proper. So we will just use that proposition  that we recall hereafter with the same notation as in Lurie's book. 
\begin{prop}\label{Smith-Lurie}
Let $\bf{A}$ be a presentable category. Suppose we are given a class $W$ of morphisms of A, which we will call weak equivalences, and a (small) set $C_0$ of morphisms of $\bf{A}$, which we will call generating cofibrations. Suppose furthermore that the following assumptions are satisfied:
\begin{itemize}
\item[$(1)$] The class $W$ of weak equivalences is perfect (\cite[Definition A.2.6.10]{Lurie_HTT}). 
\item[$(2)$] For any diagram
\[
\xy
(0,15)*+{X}="A";
(20,15)*+{Y}="B";
(0,0)*+{X'}="C";
(20,0)*+{Y'}="D";
(0,-15)*+{X''}="X";
(20,-15)*+{Y''}="Y";
{\ar@{->}^-{f}"A";"B"};
{\ar@{->}_-{}"A";"C"};
{\ar@{->}^-{}"B";"D"};
{\ar@{->}^-{}"C";"D"};
{\ar@{->}^-{}"X";"Y"};
{\ar@{->}^-{g}"C";"X"};
{\ar@{->}^-{g'}"D";"Y"};
\endxy
\] 

in which both squares are coCartesian (=pushout square), $f$ belongs to $C_0$, and $g$ belongs $W$, the map $g'$ also below to $W$.
\item[$(3)$] If $g: X \to Y$ is a morphism in $\bf{A}$ which has the right lifting property with respect to every morphism in $C_0$, then $g$ belongs to $W$.
\end{itemize}

Then there exists a left proper combinatorial model structure on $\bf{A}$ which
may be described as follows:
\begin{itemize}
\item[$(C)$] A morphism $f : X \to  Y$ in $\bf{A}$ is a cofibration if it belongs to the weakly
saturated class of morphisms generated by $C_0$.
\item[$(W)$]  A morphism $f:X\to Y$ in $\bf{A}$ is a weak equivalence if it belongs to $W$.
\item[$(F)$] A morphism $f:X\to Y$ in $\bf{A}$ is a  fibration if it has the right lifting property with respect to every map which is both a cofibration and a weak equivalence.
\end{itemize}
\end{prop}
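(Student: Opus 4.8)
The plan is to run Jeff Smith's recognition argument for combinatorial model categories, reading off left properness directly from the stacked-square hypothesis $(2)$. I would declare the cofibrations to be the weakly saturated class $\cof(C_0)$ generated by $C_0$, the weak equivalences to be $W$, and the fibrations to be the maps with the right lifting property against $\cof(C_0)\cap W$. Among the model axioms, the $2$-out-of-$3$ property and the closure of $W$ under retracts are part of perfectness $(1)$; closure of $\cof(C_0)$ under retracts is formal, and closure of the fibrations under retracts follows from their lifting characterization. So all the content sits in the two factorization systems together with the matching of fibrations to a lifting property.

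First I would produce the $(\text{cofibration},\text{trivial fibration})$ factorization: since $\mathbf{A}$ is presentable and $C_0$ is a set, the small object argument factors every map as one in $\cof(C_0)$ followed by one in $\inj(C_0)$. I would then identify $\inj(C_0)$ with the trivial fibrations. Every map in $\inj(C_0)$ lies in $W$ by hypothesis $(3)$ and, having the right lifting property against $C_0$, also against all of $\cof(C_0)$, so it is a fibration; thus $\inj(C_0)\subseteq \mathrm{Fib}\cap W$. Conversely, given a fibration $p\in W$, factor it as $p=\pi\circ i$ with $i\in\cof(C_0)$ and $\pi\in\inj(C_0)$; then $\pi\in W$, so $i\in W$ by $2$-out-of-$3$, hence $i$ is a trivial cofibration and lifts against the fibration $p$, exhibiting $p$ as a retract of $\pi\in\inj(C_0)$, whence $p\in\inj(C_0)$.

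The second factorization is where the real work lies. I would first record that condition $(2)$, bootstrapped along the cellular description of an arbitrary cofibration, yields full left properness: cobase change of a weak equivalence along any cofibration is a weak equivalence, the closure of $W$ under the relevant coproducts and transfinite colimits being supplied by perfectness. Using this together with $(3)$, I would show the trivial cofibrations $\cof(C_0)\cap W$ are closed under pushout along arbitrary maps: given a trivial cofibration $u\colon X\to Y$ and a map $a\colon X\to X'$, factor $a=q\circ b$ with $b\in\cof(C_0)$ and $q\in\inj(C_0)\subseteq W$; left properness makes the cobase change $u_1$ of $u$ along the cofibration $b$ a weak equivalence, and the cobase change of $q\in W$ along the cofibration $u_1$ a weak equivalence, so two applications of $2$-out-of-$3$ force the pushout $u'\colon X'\to Y'$ into $W$. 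Closure under transfinite composition (via the filtered-colimit stability in $(1)$) and under retracts then makes $\cof(C_0)\cap W$ weakly saturated. The main obstacle, and the step that genuinely needs the accessibility clause of perfectness, is Smith's generation lemma: that this weakly saturated class is generated by a \emph{set} $J$. Here I would choose a regular cardinal $\kappa$ for which $\mathbf{A}$ is locally $\kappa$-presentable, $C_0$ has $\kappa$-compact domains and codomains, and $W$ is $\kappa$-accessible and closed under $\kappa$-filtered colimits, and take $J$ to be a set of representatives of the trivial cofibrations between $\kappa$-compact objects; the accessibility of $W$ is what lets one approximate an arbitrary trivial cofibration by $\kappa$-small ones and conclude $\cof(J)=\cof(C_0)\cap W$. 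A final application of the small object argument to $J$ then supplies the $(\text{trivial cofibration},\text{fibration})$ factorization.

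With both factorizations available the remaining axioms are formal: trivial cofibrations lift against fibrations by definition, cofibrations lift against trivial fibrations because the latter are exactly $\inj(C_0)$, and the retract argument of the second paragraph closes every loop. Left properness is then not an extra labour but the very content of hypothesis $(2)$ once extended by weak saturation, so the model structure produced is automatically left proper, as asserted.
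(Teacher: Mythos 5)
The paper never proves this proposition: it is quoted essentially verbatim from Lurie's \emph{Higher Topos Theory} (Proposition A.2.6.13), and the author states explicitly that he will ``just use that proposition.'' So the only meaningful benchmark is Lurie's own argument, which your proposal reconstructs faithfully and correctly in outline: the $(\mathrm{cof}(C_0),\mathrm{inj}(C_0))$ factorization from the small object argument, the identification of $\mathrm{inj}(C_0)$ with the trivial fibrations via hypothesis $(3)$ and the retract argument; the bootstrapping of hypothesis $(2)$ through coproducts, cobase change, transfinite composition and retracts (using the filtered-colimit stability and retract-closure supplied by perfectness) to obtain left properness along all cofibrations; the factor-as-cofibration-then-$\mathrm{inj}(C_0)$ trick combined with two applications of $2$-out-of-$3$ to show that trivial cofibrations are stable under cobase change along arbitrary maps; and finally the reduction of the second factorization to Smith's generation lemma producing a set $J$ with $\mathrm{cof}(J)=\mathrm{cof}(C_0)\cap W$. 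The one place where your text is a sketch rather than a proof is exactly that generation lemma: the claim that the trivial cofibrations with $\kappa$-compact (co)domains generate all of $\mathrm{cof}(C_0)\cap W$ is the genuinely hard accessibility argument (Lurie's A.2.6.12, equivalently Beke's solution-set lemma), in which one writes an arbitrary trivial cofibration as a $\kappa$-filtered colimit of $\kappa$-presentable approximations and runs a transfinite lifting to exhibit it as a retract of a $J$-cell complex. You correctly identify this as the main obstacle and point at the right mechanism, but a self-contained proof would have to supply that argument or cite it precisely. Given that the paper treats the entire proposition as a black box, your reconstruction already goes beyond what the paper offers; just be aware that the weight of the theorem sits in the step you have left as an appeal to accessibility.
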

\begin{note}
Here \emph{perfectness} is a property of stability under filtered colimits and a generation by a small set $W_0$ (which is more often the intersection of $W$ and the set of maps between presentable objects). The reader can find the exact definition in \cite[Definition A.2.6.10]{Lurie_HTT}.
\end{note}

\begin{warn}
We've used so far the letters $f, g$ as functions so to avoid any confusion we will use $\sigma,\sigma'$ instead.
\end{warn}

Applying the previous proposition we get the following theorem.
\begin{thm}\label{enlarging-msx}
Let $\M$ be a combinatorial monoidal model category which is left proper. Then  there exists a combinatorial model structure on $\coms$ which is left proper and which may be described as follows. 
\begin{enumerate}
\item A map $\sigma: \F \to \G$ is a weak equivalence if it's an easy weak equivalence i.e, if it's in $\W_{\comse}$.
\item A map $\sigma: \F \to \G$  is a cofibration if it belongs to the weakly
saturated class of morphisms generated by $ \I_{\coms}^{+}$.
\item A morphism $\sigma: \F \to \G$ is a fibration if it has the right lifting property with respect to every map which is both a cofibration and a weak equivalence
\end{enumerate}
We will denote this model category  by $\comsep$. The identity functor $$ \Id: \comse \to \comsep, $$ is a left Quillen equivalence
\end{thm}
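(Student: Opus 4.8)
The plan is to obtain $\comsep$ by a single application of Proposition \ref{Smith-Lurie}, taking the presentable category $\bf{A}=\coms$ (presentable by Theorem \ref{exist-left-adj}), the class $W=\W_{\comse}$ of easy weak equivalences, and the generating set $C_0=\I_{\coms}^{+}=\I_{\coms}\sqcup\kb(\I)$. Verifying the three hypotheses of that proposition yields at once a left proper combinatorial model structure whose cofibrations, weak equivalences and fibrations are exactly those in the statement; it then remains only to treat the identity functor.

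For hypothesis $(1)$ I would show that $W$ is perfect by exhibiting it as a preimage. By definition $\sigma\in W$ iff $\Ev_\1(\sigma)$ is a weak equivalence of $\M$, and the weak equivalences $W_\M$ of the combinatorial category $\M$ form a perfect class. The functor $\Ev_\1\colon\coms\to\M$ is a map of locally presentable categories that preserves filtered colimits, since these are computed level-wise (Proposition \ref{closure-limit-filt}), and is therefore accessible. Hence $W=\Ev_\1^{-1}(W_\M)$ is again perfect, being the preimage of a perfect class along an accessible, filtered-colimit-preserving functor.

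Hypothesis $(3)$ is immediate from the inclusion $\I_{\coms}\subseteq\I_{\coms}^{+}$: a map with the right lifting property against all of $C_0$ lifts in particular against $\I_{\coms}$, so it is a trivial fibration in $\comse$ and thus lies in $W$. The substance is in hypothesis $(2)$, which I expect to be the one genuine point. Given the two stacked coCartesian squares with $f\in C_0$ and $g\in W$, I would apply $\Ev_\1$ throughout. By Lemma \ref{lem-important-pushout-general} this functor preserves pushouts, so both squares stay coCartesian in $\M$; by Lemma \ref{petit-lem-cofib} the component $\Ev_\1(f)$ is a cofibration of $\M$, whence its pushout along the top square---the evaluated middle map---is again a cofibration. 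The evaluated $g'$ is then the pushout of the weak equivalence $\Ev_\1(g)$ along this cofibration, so left properness of $\M$ gives $\Ev_\1(g')\in W_\M$, i.e. $g'\in W$. With all three hypotheses in hand, Proposition \ref{Smith-Lurie} produces the left proper combinatorial model category $\comsep$.

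For the last assertion, the nesting $\I_{\coms}\subseteq\I_{\coms}^{+}$ gives $\cof(\comse)\subseteq\cof(\comsep)$, so $\Id$ sends cofibrations to cofibrations; and since the two model structures have one and the same class $W$ of weak equivalences, $\Id$ preserves weak equivalences, hence trivial cofibrations, and is left Quillen. Finally, because the weak equivalences coincide, $\Ho[\comse]$ and $\Ho[\comsep]$ are both the localization $\coms[W^{-1}]$, and the total left derived functor of $\Id$ is (canonically isomorphic to) the identity of this homotopy category; it is therefore an equivalence, so $\Id\colon\comse\to\comsep$ is a left Quillen equivalence. The only mildly delicate step is hypothesis $(2)$, and it reduces cleanly, via the level-$\1$ computation of pushouts together with Lemma \ref{petit-lem-cofib}, to the left properness hypothesis on $\M$.
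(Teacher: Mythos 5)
Your proposal is correct and follows essentially the same route as the paper: both verify the three hypotheses of Proposition \ref{Smith-Lurie} for $W=\W_{\comse}$ and $C_0=\I_{\coms}^{+}$, with hypothesis $(2)$ handled by evaluating at $\1$ (where pushouts are computed), invoking Lemma \ref{petit-lem-cofib} and the left properness of $\M$, and then note that the identity is a Quillen equivalence since the two structures share the same weak equivalences. The only (immaterial) variation is your justification of perfectness of $W$ as the preimage of $W_\M$ under the filtered-colimit-preserving functor $\Ev_\1$, where the paper simply cites that the weak equivalences of the combinatorial model category $\comse$ already form a perfect class.
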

\begin{proof}
Condition $(1)$ is straightforward because $\W_{\comse}$ is the class of weak equivalences in the combinatorial model category $\comse$. We also have Condition $(3)$ since a map $\sigma$ in $\I_{\coms}^{+}\tx{-inj}$ is in particular in $\I_{\coms}\tx{-inj}$, therefore it's a trivial fibration in $\comse$ and thus an easy weak equivalence.\\

It remains to check that Condition $(2)$ is also satisfied. Consider the following diagram as in the proposition.
\[
\xy
(0,10)*+{\F}="A";
(20,10)*+{\G}="B";
(0,0)*+{\F'}="C";
(20,0)*+{\G'}="D";
(0,-10)*+{\F''}="X";
(20,-10)*+{\G''}="Y";
{\ar@{->}^-{\sigma}"A";"B"};
{\ar@{->}_-{}"A";"C"};
{\ar@{->}^-{}"B";"D"};
{\ar@{->}^-{}"C";"D"};
{\ar@{->}^-{}"X";"Y"};
{\ar@{->}^-{\theta}"C";"X"};
{\ar@{->}^-{\theta'}"D";"Y"};
\endxy
\]

 If $\sigma: \F \to \G$ is in $\I_{\coms}^{+}$, we have from Lemma \ref{petit-lem-cofib} that each top-component 
$$\sigma_{\1}: \F(\1) \to \G(\1),$$ 
is a cofibration in $\M$. Now as mentioned several times in the paper, pushouts in $\coms$ are computed level-wise at the entry $\1$. It follows that the top components in that diagram are obtained by pushout in $\M$; and since $\M$ is left proper we get that the component $\theta'_{\1}$ is a weak equivalence, which means that $\theta'$ is an easy weak equivalence as desired.\\

A cofibration in $\comse$ is a cofibration in $\comsep$ and since we have the same weak equivalences  then the identity functor is a Quillen equivalence.
\end{proof}
\section{Bousfield localizations}\label{sec-bousfield}
\begin{warn}
We would like to warn the reader about our upcoming notation for the left Bousfield localizations. We choose to include a small letter $\mathbf{c}$ (for ``correct'') as a superscript in both $\comsep$ and $\comse$ to mean that we are taking the left Bousfield localization with respect to the (same) set $\kb(\I)$. A more suggestive and standard notation should be $\Lb^{\kb(\I)}\comse$ or $\kb(\I)^{-1}\comse$, but as the reader can see, this is too heavy to work with.\\

Instead we will use the notation $\comsec$ and $\comsepc$.
\end{warn}

\begin{rmk}

\begin{enumerate}
\item Since we have the same class of weak equivalences in $\comse$ and in $\comsep$, we have an equivalence of function complexes on these model structures. It follows that a map $\sigma$ in $\coms$ is a $\kb(\I)$-local equivalence in the model structure $\comse$ if and only if it's a $\kb(\I)$-local equivalence in the model structure $\comsep$.
\item A direct consequence of this is that the left Quillen equivalence $$\comse \to \comsep,$$ given by the identity, will pass to a left Quillen equivalence between the respective Bousfield localization:
$$\comsec \to \comsepc.$$ 
\end{enumerate}
\end{rmk}

\subsection{The first localized model category}
We start with the localization of $\comsep$.
\begin{thm}\label{main-thm-1}
Let $\M$ be a combinatorial monoidal model category which is left proper. Then  there exists a combinatorial model structure on $\coms$ which is left proper and which may be described as follows. 
\begin{enumerate}
\item A map $\sigma: \F \to \G$ is a weak equivalence if and only if it's a $\kb(\I)$-local equivalence.
\item A map $\sigma: \F \to \G$ is cofibration if it's a cofibration in $\comsep$.
\item Any fibrant object $\F$ is a co-Segal commutative monoid. 
\item We will denote this model category  by $\comsepc$. 
\item The identity of $\coms$ determines the universal left Quillen functor
$$\Lb_+: \comsep \to \comsepc.$$

\end{enumerate}

This model structure is the left Bousfield localization of $\comsep$ with the respect to the set $\kb(\I)$. \end{thm}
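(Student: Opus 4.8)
The plan is to obtain the model structure as a formal consequence of the existence theorem for left Bousfield localizations of left proper combinatorial model categories, and then to isolate the one genuinely new statement --- that fibrant objects are co-Segal --- as a short lifting argument resting on Lemma~\ref{k-inj-cosegal}.

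First I would invoke Theorem~\ref{enlarging-msx}, which says that $\comsep$ is combinatorial and left proper. For such a model category the left Bousfield localization with respect to any \emph{set} of maps exists; here the set is $\kb(\I)$. The relevant existence statement is the one recalled in Proposition~\ref{Smith-Lurie} (Lurie's left-proper refinement of Smith's recognition theorem), or equivalently the results of Barwick \cite{Barwick_localization} and Hirschhorn \cite{Hirsch-model-loc}. Its output is precisely a model structure on $\coms$ whose weak equivalences are the $\kb(\I)$-local equivalences (item (1)), whose cofibrations are unchanged from $\comsep$ (item (2)), and which is again combinatorial and left proper. This structure is $\comsepc$ (item (4)), and the identity functor $\comsep \to \comsepc$ is left Quillen by the universal property of the localization, yielding item (5) and the functor $\Lb_+$.

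The only assertion requiring a separate argument is item (3): every fibrant $\F$ in $\comsepc$ is a co-Segal commutative monoid. I would deduce this from $\kb(\I)$-injectivity via Lemma~\ref{k-inj-cosegal}. The key observation is that each $\sigma = \Psi_\n(\av) \in \kb(\I)$ is a \emph{trivial cofibration} in $\comsepc$: it is a cofibration because $\kb(\I)$ is by construction contained in the generating set $\I_{\coms}^{+}$ of $\comsep$, and the cofibrations of $\comsepc$ are those of $\comsep$ by item~(2); and it is a weak equivalence because the maps at which one localizes are always local equivalences, i.e.\ weak equivalences of the localization. A fibrant $\F$ has $\F \to \ast$ a fibration, hence $\F \to \ast$ has the right lifting property against every trivial cofibration, in particular against every $\sigma \in \kb(\I)$. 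Thus $\F$ is $\kb(\I)$-injective, and Lemma~\ref{k-inj-cosegal} then gives that each $\F(u_\n)\colon \F(\1) \to \F(\n)$ is a trivial fibration in $\M$, so the co-Segal conditions hold.

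The main obstacle is conceptual rather than computational: it is to recognize that all the homotopical content has already been front-loaded into the construction of $\comsep$ --- its cofibrations were enlarged precisely so that $\kb(\I)$ consists of cofibrations --- and into Lemma~\ref{k-inj-cosegal}, which converts injectivity into the co-Segal condition. With those in hand, item (3) reduces to a one-line lifting argument. The one point demanding care when invoking the existence theorem is verifying its hypotheses --- that the class of $\kb(\I)$-local equivalences is perfect and stable under the cobase-change condition of Proposition~\ref{Smith-Lurie} --- but this is exactly what the cited combinatorial-localization machinery supplies for any set of maps in a left proper combinatorial model category, so no ad hoc verification is needed here.
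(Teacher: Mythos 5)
Your proposal is correct and follows essentially the same route as the paper: existence and left properness of the localization come from the Smith/Barwick existence theorem for left Bousfield localizations of left proper combinatorial model categories, and item (3) is exactly the paper's argument that elements of $\kb(\I)$ are trivial cofibrations in $\comsepc$ (cofibrations since $\kb(\I)\subset \I_{\coms}^{+}$, weak equivalences since localizing maps become local equivalences), so fibrant objects are $\kb(\I)$-injective and Lemma~\ref{k-inj-cosegal} applies. No gaps.
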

\begin{df}
Define a \emph{co-Segalification functor} for $\coms$ to be any fibrant replacement functor in $\comsepc$.
\end{df}

\begin{proof}[Proof of Theorem \ref{main-thm-1}]
The existence of the left Bousfield localization and the left properness is guaranteed by Smith's theorem on left Bousfield localization for combinatorial model categories. We refer the reader to Barwick \cite[Theorem 4.7]{Barwick_localization} for a precise statement. This model structure is again combinatorial.\\

For the rest of the proof we will use the following facts on Bousfield localization and the reader can find them in Hirschhorn's book \cite{Hirsch-model-loc}. 
\begin{enumerate}
\item A weak equivalence in $\comsepc$ is a \emph{$\kb(\I)$-local weak equivalence}; we will refer them as \emph{new weak equivalence}. And any easy weak equivalence (old one) is a new weak equivalence.
\item The new cofibrations are the same as the old ones and therefore the new trivial fibrations are just the old ones too. In particular a trivial fibration in the left Bousfield localization is an easy weak equivalence. 
\item The fibrant objects are the  $\kb(\I)$-local objects that are fibrant in the original model structure.
\item Every map in $\kb(\I)$ becomes a weak equivalence in $\comsec$, therefore an isomorphism in the homotopy category.
\end{enumerate}

Let $\F$ be a fibrant object in $\comsepc$, this means that the unique map $\F \to \ast$ has the RLP with respect to any trivial cofibration. Now observe elements of $\kb(\I)$ are trivial cofibrations in $\comsepc$ because they were old cofibrations and become weak equivalences. So any fibrant $\F$ must be in particular $\kb(\I)$-injective and thanks to  Lemma \ref{k-inj-cosegal}, we know that $\F$ satisfies the co-Segal conditions.
\end{proof}
\begin{cor}
Let $\Sim $ be a co-Segalification functor, i.e a fibrant replacement in $\comsepc$. Then a map $\sigma: \F \to \G$ is a weak equivalence in $\comsepc$ if and only if the map 
$$\Sim(\sigma): \Sim(\F) \to \Sim(\G),$$
 is a level-wise weak equivalence of co-Segal commutative monoids.
\end{cor}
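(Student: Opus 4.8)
The plan is to reduce the two-sided statement to the single question of when $\Sim(\sigma)$ is a weak equivalence between fibrant objects, and then to convert ``weak equivalence in $\comsepc$'' first into ``easy weak equivalence'' and finally into ``level-wise weak equivalence''.

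First I would record the two defining features of the fibrant replacement $\Sim$ in $\comsepc$. For every $\F$ the object $\Sim(\F)$ is fibrant in $\comsepc$, hence a co-Segal commutative monoid by Theorem~\ref{main-thm-1}, and the natural map $\tau_{\F}\colon \F \to \Sim(\F)$ is a weak equivalence in $\comsepc$, i.e.\ a $\kb(\I)$-local equivalence. Naturality of $\tau$ produces a commuting square with sides $\sigma$, $\Sim(\sigma)$, $\tau_{\F}$ and $\tau_{\G}$; since $\tau_{\F}$ and $\tau_{\G}$ are $\kb(\I)$-local equivalences, two-out-of-three in $\comsepc$ shows that $\sigma$ is a weak equivalence in $\comsepc$ if and only if $\Sim(\sigma)$ is. This reduces everything to analysing $\Sim(\sigma)\colon \Sim(\F)\to\Sim(\G)$, a map between objects that are fibrant in $\comsepc$.

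The main step is to identify such local equivalences with easy weak equivalences. Here I would appeal to the general theory of left Bousfield localizations (see \cite{Hirsch-model-loc}): a $\kb(\I)$-local equivalence between $\kb(\I)$-local objects is already a weak equivalence in the unlocalized model category $\comsep$, that is, an easy weak equivalence; the converse inclusion of easy weak equivalences into $\kb(\I)$-local equivalences was already noted in the proof of Theorem~\ref{main-thm-1}. This is the crux of the argument and the step I expect to be the main obstacle, since it is precisely where the localization enters; in essence it rests on the fact that two model structures sharing the same cofibrations and the same fibrant objects have the same weak equivalences between fibrant objects. Consequently $\Sim(\sigma)$ is a weak equivalence in $\comsepc$ if and only if its initial component $\Sim(\sigma)_{\1}\colon \Sim(\F)(\1)\to\Sim(\G)(\1)$ is a weak equivalence in $\M$.

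Finally I would upgrade ``easy'' to ``level-wise'' using the co-Segal conditions. Since $\Sim(\F)$ and $\Sim(\G)$ are co-Segal commutative monoids, for each $\n$ the structure maps $\Sim(\F)(\1)\to\Sim(\F)(\n)$ and $\Sim(\G)(\1)\to\Sim(\G)(\n)$ indexed by $u_{\n}\colon \1\to\n$ are weak equivalences in $\M$. Applying two-out-of-three in $\M$ to the naturality square of $\Sim(\sigma)$ for $u_{\n}$ then shows that $\Sim(\sigma)_{\1}$ is a weak equivalence if and only if every component $\Sim(\sigma)_{\n}$ is; equivalently, $\Sim(\sigma)$ is an easy weak equivalence if and only if it is a level-wise weak equivalence of co-Segal commutative monoids. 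Chaining the three equivalences established above yields the corollary.
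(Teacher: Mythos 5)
Your proposal is correct and follows essentially the same route as the paper: reduce by two-out-of-three along the natural transformation $\Id \to \Sim$ to the question of when $\Sim(\sigma)$ is a local equivalence, use the standard fact that a $\kb(\I)$-local equivalence between local (fibrant) objects is a weak equivalence in the unlocalized model structure $\comsep$, and then use the co-Segal conditions plus two-out-of-three in $\M$ to pass from the initial component to a level-wise equivalence (a step the paper leaves implicit and you spell out). One minor caveat: your parenthetical gloss that the argument rests on two model structures sharing ``the same cofibrations and the same fibrant objects'' is off, since the localization has strictly fewer fibrant objects; the correct justification is simply Hirschhorn's theorem that an $S$-local equivalence between $S$-local objects is a weak equivalence in the original model category, which is what you cite anyway.
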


\begin{proof}
Since $\Sim$ is a fibrant replacement functor in the new model structure, then $\Sim(\F)$ is a co-Segal commutative monoid for all $\F$,  by the second assertion of the previous theorem.

 By the $3$-for-$2$ property of weak equivalences in any model category, a map $\sigma$ is a weak equivalence if and only if $\Sim(\sigma)$ is a weak equivalence. But $\Sim(\sigma)$ is a weak equivalence of fibrant objects in the Bousfield localization, therefore it's a weak equivalence in the original model structure.

In the end we see that  $\sigma$ is a weak equivalence in $\comsepc$ if and only if $\Sim(\sigma)$ is an easy weak equivalence in $\comsep$. Now an easy weak equivalence between co-Segal categories is just a level-wise weak equivalence.
\end{proof}
\begin{prop}\label{prop-eta-kx-loc-equiv}
For any $\F \in \coms$, the canonical map 
$\F \to |\F|$ is an equivalence in $\comsepc$ i.e, it's a $\kb(\I)$-local equivalence in $\comsep$ (whence in $\comse$).  
\end{prop}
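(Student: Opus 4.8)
The plan is to deduce this directly from the third assertion of Proposition-Definition \ref{pdef-deux-constant}, applied to a well-chosen functor $L$. First I observe that the canonical map $\F \to |\F|$ is, up to the fully faithful inclusion $\iota$, exactly the unit $\eta: \F \to \iota(|\F|)$ of the adjunction $|-| \dashv \iota$. So it suffices to show that $\eta$ is inverted by the localization functor $\gamma: \coms \to \Ho(\comsepc)$ onto the homotopy category of the localized model structure; by definition of the weak equivalences in a left Bousfield localization, this is equivalent to $\eta$ being a $\kb(\I)$-local equivalence. Thus the whole proof reduces to checking that $L := \gamma$ satisfies the two hypotheses of Proposition-Definition \ref{pdef-deux-constant}$(3)$: that it sends easy weak equivalences to isomorphisms and that it sends every $\kb_2$-cell complex to an isomorphism.

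The first hypothesis is immediate. An easy weak equivalence is a weak equivalence in $\comse$, equivalently in $\comsep$ since these share the same weak equivalences by Theorem \ref{enlarging-msx}, and the class of weak equivalences only enlarges under left Bousfield localization. Hence every easy weak equivalence is a $\kb(\I)$-local equivalence and is inverted by $\gamma$.

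For the second hypothesis I would argue that every $\kb_2$-cell complex is in fact a trivial cofibration in $\comsepc$. Since $\kb_2 \subseteq \kb(\I) \subseteq \I_{\coms}^{+}$, each element of $\kb_2$ is a cofibration in $\comsep$, hence in $\comsepc$, the localization having the same cofibrations by Theorem \ref{main-thm-1}$(2)$. Moreover, as recalled in the proof of Theorem \ref{main-thm-1}, every map of $\kb(\I)$ becomes a weak equivalence in $\comsepc$; in particular each element of $\kb_2$ is a \emph{trivial} cofibration there. Trivial cofibrations are closed under pushout and transfinite composition, so any $\kb_2$-cell complex is again a trivial cofibration in $\comsepc$, whence a weak equivalence inverted by $\gamma$.

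With both hypotheses verified, Proposition-Definition \ref{pdef-deux-constant}$(3)$ yields that $\gamma(\eta)$ is an isomorphism, i.e. $\eta$ is a $\kb(\I)$-local equivalence in $\comsep$, and hence in $\comse$ because the two model structures have the same local equivalences, as noted in the Remark opening this section. The essential content lies entirely in Proposition-Definition \ref{pdef-deux-constant}$(3)$; the only point demanding care is the identification of $\kb_2$-cell complexes with trivial cofibrations of $\comsepc$, which I expect a careful reader to want spelled out, but which follows cleanly from the inclusion $\kb_2 \subseteq \kb(\I)$ and the saturation properties of trivial cofibrations.
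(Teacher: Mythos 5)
Your proof is correct and follows essentially the same route as the paper's: both show that elements of $\kb(\I)$ become trivial cofibrations in $\comsepc$, deduce that $\kb_2$-cell complexes are trivial cofibrations (hence inverted in the localization), and then invoke Assertion $(3)$ of Proposition \ref{pdef-deux-constant}. Your version merely spells out the verification of both hypotheses of that assertion, plus the saturation fact that a map is a weak equivalence iff it is inverted in the homotopy category, which the paper leaves implicit.
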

\begin{proof}
Every element $\Psi_s(\av) \in \kb(\I)$ becomes a trivial cofibration  in $\comsepc$, since they were cofibration in $\comsep$. In particular every $\kb(\I)$-cell complex (whence $\kb_2$-cell complex) is a trivial cofibration.  The proposition follows from  Assertion $(3)$ of Proposition \ref{pdef-deux-constant}.
\end{proof}
\subsection{The second localized model category}
We now localize the original model category $\comse$ that doesn't contain a priori the set $\kb(\I)$ among the class of cofibrations. The theorem we give below is also a straightforward application of Smith's theorem for left proper combinatorial model category.
\begin{thm}\label{main-thm-2}
Let $\M$ be a combinatorial monoidal model category which is left proper. Then  there exists a combinatorial model structure on $\coms$ which is left proper and which may be described as follows.
\begin{enumerate}
\item A map $\sigma: \F \to \G$ is a weak equivalence if and only if it's a $\kb(\I)$-local equivalence. 
\item A map $\sigma: \F \to \G$ is cofibration if it's a cofibration in $\comse$.
We will denote this model category  by $\comsec$
\item If a transferred model structure on $\com$ exists then the adjunction $$ |-|: \comse \leftrightarrows \com: \iota, $$ descends to a Quillen adjunction:
$$ |-|^{\mathbf{c}}: \comsec \leftrightarrows \com: \iota.$$
In particular the inclusion $\iota: \com \to \comsec$ is again a right Quillen functor.
\item  The identity of $\coms$ determines the universal left Quillen functor
$$\Lb: \comse \to \comsec.$$
\end{enumerate}
This model structure is the left Bousfield localization of $\comse$ with the respect to the set $\kb(\I)$. 
\end{thm}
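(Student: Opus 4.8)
The plan is to realize $\comsec$ as the left Bousfield localization of the easy model category $\comse$ (Theorem \ref{easy-model-coms}) at the set $\kb(\I)$, and then to verify that the strict-to-weak comparison adjunction survives the localization. Since $\comse$ is combinatorial and left proper, Smith's existence theorem for left Bousfield localizations of left proper combinatorial model categories (Barwick \cite[Theorem 4.7]{Barwick_localization}) immediately produces a combinatorial, left proper model structure on $\coms$ whose weak equivalences are the $\kb(\I)$-local equivalences and whose cofibrations coincide with those of $\comse$. This yields Assertions $(1)$ and $(2)$, identifies the structure as the desired localization, and makes Assertion $(4)$ formal: the identity functor $\Lb\colon \comse \to \comsec$ is left Quillen and is universal among left Quillen functors out of $\comse$ sending $\kb(\I)$ into the weak equivalences, by the defining property of the localization.

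For Assertion $(3)$ I would invoke the universal property of left Bousfield localization for Quillen pairs (Hirschhorn \cite[Theorem 3.3.20]{Hirsch-model-loc}): a Quillen pair $|-|\colon \comse \rightleftarrows \com\colon \iota$ descends to a Quillen pair on $\comsec$ precisely when the total left derived functor $\mathbf{L}|-|$ carries the image of each element of $\kb(\I)$ to an isomorphism in $\Ho(\com)$. The decisive input is Proposition \ref{prop-comp-kbi}$(2)$: the underived reflection $|-|$ already sends every $\sigma \in \kb(\I)$ to a genuine isomorphism of strict commutative monoids. Granting the passage to the derived functor, Hirschhorn's hypothesis holds, and we obtain the descended adjunction $|-|^{\mathbf{c}}\colon \comsec \rightleftarrows \com\colon \iota$; the functor $\iota$ is again right Quillen because cofibrations, hence trivial fibrations, are unchanged by the localization, so $\iota$ still preserves them.

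The step I expect to be the main obstacle is exactly the passage from the underived equality $|-|(\sigma)\cong\ast$ to the derived statement $\mathbf{L}|-|(\sigma)\simeq\ast$, since $|-|$, being merely left Quillen, controls weak equivalences only between cofibrant objects. I would resolve this in one of two ways. Directly, one replaces $\sigma$ by a cofibrant model in $\comse$ before applying $|-|$ and checks, using that pushouts and the comparison weak equivalences are computed levelwise at $\1$ (Lemma \ref{lem-important-pushout-general}) together with Ken Brown's lemma, that this replacement does not disturb the isomorphism. Alternatively, and more in keeping with the machinery already developed, I would argue at the level of $\kb(\I)$-local equivalences: given a cofibration $\sigma\colon\F\to\G$ that is a $\kb(\I)$-local equivalence, the naturality square of the units $\F\to|\F|$, $\G\to|\G|$ together with Proposition \ref{prop-eta-kx-loc-equiv} forces $\iota|\sigma|$ to be a $\kb(\I)$-local equivalence between objects that are $\kb(\I)$-injective (Lemma \ref{k-inj-cosegal}$(2)$), whence $|\sigma|$ is a weak equivalence in $\com$; this shows $|-|$ sends the trivial cofibrations of $\comsec$ to weak equivalences and establishes that it is left Quillen.
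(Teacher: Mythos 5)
Your proposal follows the paper's own proof essentially step for step: existence, left properness, and assertions (1), (2), (4) come from Smith's/Barwick's theorem on left Bousfield localizations of left proper combinatorial model categories, and assertion (3) comes from the universal property of the localization together with Proposition \ref{prop-comp-kbi}(2), exactly as in the paper. The derived-versus-underived subtlety you isolate is real but is simply not engaged with in the paper (which applies the universal property directly to the underived fact that $|-|$ sends $\kb(\I)$ to isomorphisms); be aware only that your second proposed patch is delicate, since $\kb(\I)$-injectivity of strict monoids is a lifting condition rather than $\kb(\I)$-locality, and the concluding ``whence'' is essentially Lemma \ref{lem-reflect-equiv}, whose proof in the paper already uses the right-Quillenness of $\iota$ established by this very theorem.
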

\begin{proof}
The existence and characterization of the Bousfield localization follows also from Smith theorem. The first two assertions are clear. Assertion $(3)$ is a consequence of the universal property of the left Bousfield localization. Indeed we have a left Quillen functor $|-|: \comse \to \com$ that takes elements of $\kb(\I)$ to weak equivalences in $\com$ (Proposition \ref{prop-comp-kbi}). Therefore there exists a unique left Quillen functor $|-|^{\mathbf{c}}: \comsec \to \com$ such that we have an equality
$$|-|: \comse \to \com= \comse \xrightarrow{\Lb} \comsec \xrightarrow{|-|^{\mathbf{c}}} \com.$$
\end{proof}

\section{The Quillen equivalence}\label{sec-Quillen-equiv}

We start first with the following lemma which is useful to establish the Quillen equivalence. 
\begin{lem}\label{lem-reflect-equiv}
Let $\M$ be a symmetric monoidal model category that is combinatorial and left proper. Assume that the transferred (natural) model structure on $\com$ exists. Let $\sigma : \C \to \D$ be a morphism between usual commutative monoids regarded also as morphism in $\coms$. 

Then $\sigma$ is a $\kb(\I)$-local equivalence in $\comse$ (whence in $\comsep$) if and only if it's a weak equivalence in $\com$. 
\end{lem}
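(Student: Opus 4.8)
The plan is to exploit that $\iota\colon\com\hookrightarrow\coms$ is fully faithful and both preserves and reflects easy weak equivalences. Indeed, easy weak equivalences are detected on the component at $\1$, and $\iota(\C)(\1)$ is the underlying object of $\C$; hence $\sigma$ is a weak equivalence in $\com$ if and only if $\iota(\sigma)$ is an easy weak equivalence in $\comse$. This reduces the statement to: for a map $\sigma$ between strict commutative monoids, $\iota(\sigma)$ is a $\kb(\I)$-local equivalence if and only if it is an easy weak equivalence. One implication is immediate, since every easy weak equivalence is a weak equivalence in $\comse$, hence a $\kb(\I)$-local equivalence in the Bousfield localization; this settles the direction ``weak equivalence in $\com$ $\Rightarrow$ $\kb(\I)$-local equivalence''.

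For the converse I would first replace $\C$ and $\D$ by fibrant replacements in $\com$, using $2$-out-of-$3$ in both $\com$ and $\comsec$ together with the fact that $\iota$ sends weak equivalences of $\com$ to easy, hence local, equivalences; so I may assume $\C,\D$ are fibrant. Now $\iota(\C)$ is constant, hence perfectly $2$-constant, so Proposition \ref{pdef-deux-constant} applies to it. The $2$-constant co-Segalification $\Sim$ yields a natural map $\tau_{\C}\colon\iota(\C)\to\Sim(\iota(\C))$ with $\Sim(\iota(\C))$ co-Segal and $|\Sim(\iota(\C))|\cong\C$. The map $\tau_{\C}$ is in fact an easy weak equivalence: its component at $\1$, postcomposed with the trivial fibration $\Sim(\iota(\C))(\1)\to|\Sim(\iota(\C))|\cong\C$ of Proposition \ref{pdef-deux-constant}(2), equals the isomorphism $|\tau_{\C}|$ of Proposition \ref{pdef-deux-constant}(1), so $\tau_{\C,\1}$ is a weak equivalence by $2$-out-of-$3$. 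Moreover, since $\Sim(\iota(\C))$ is $2$-constant, all its structure maps $\Sim(\iota(\C))(u_{\n})$ coincide with that single trivial fibration; by Lemma \ref{k-inj-cosegal} this makes $\Sim(\iota(\C))$ a $\kb(\I)$-injective object, and it is fibrant in $\comse$ because $\C$ is fibrant. The same holds for $\D$, and naturality of $\Sim$ produces a map $\Sim(\iota(\sigma))$ fitting in a commuting square with the easy weak equivalences $\tau_{\C},\tau_{\D}$.

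Applying $2$-out-of-$3$ to that square, the hypothesis that $\iota(\sigma)$ is a $\kb(\I)$-local equivalence forces $\Sim(\iota(\sigma))$ to be one as well. The crux is then that $\Sim(\iota(\C))$ and $\Sim(\iota(\D))$ are $\kb(\I)$-\emph{local} (fibrant in $\comsepc$), so that $\Sim(\iota(\sigma))$, being a $\kb(\I)$-local equivalence between fibrant objects of the localization, is automatically an easy weak equivalence — a weak equivalence between fibrant objects of a left Bousfield localization is a weak equivalence in the underlying model category. Transporting back along the easy weak equivalences $\tau_{\C},\tau_{\D}$ then shows that $\iota(\sigma)_{\1}$ is a weak equivalence in $\M$, i.e. $\sigma$ is a weak equivalence in $\com$. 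The step I expect to be the main obstacle is exactly this upgrade from ``$\kb(\I)$-injective'' to ``$\kb(\I)$-local'': injectivity against the generating maps $\Psi_{\n}(\av)$ gives the co-Segal conditions directly (Lemma \ref{k-inj-cosegal}), but locality is a statement about homotopy function complexes. The favorable feature to be used is that each $\Psi_{\n}(\av)$ is a cofibration whose target component is the identity $\Id_V$ (Remark \ref{rmk-projec-com}), which is what should let the mapping-space condition collapse onto the lifting condition already verified; failing a direct argument, one falls back on the characterization of fibrant objects in $\comsepc$ from Theorem \ref{main-thm-1} together with Proposition \ref{prop-eta-kx-loc-equiv}.
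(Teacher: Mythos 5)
Your ``if'' direction and the preliminary reductions are fine, but the step you yourself flag as the crux is a genuine gap, and it cannot be closed with the tools this paper allows itself. Knowing that $\Sim(\iota(\C))$ is $\kb(\I)$-injective and fibrant in $\comse$ does not make it $\kb(\I)$-local: locality asks that each $\Psi_\n(\av)$ induce a weak equivalence of \emph{homotopy function complexes} into the object, whereas the right lifting property against $\Psi_\n(\av)$ only gives a point-set extension statement. Upgrading injectivity to locality is exactly what the HELP Lemma does, and it requires hypotheses (tractability, i.e.\ cofibrant domains) that the paper explicitly declines to assume --- see the Note in the introduction, where even the weaker claim that fibrant objects of $\comsec$ satisfy the co-Segal conditions is set aside for precisely this reason. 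Your fallback references do not repair this: Theorem \ref{main-thm-1} gives the implication ``fibrant in $\comsepc$ $\Rightarrow$ $\kb(\I)$-injective'', the opposite of what you need, and Proposition \ref{prop-eta-kx-loc-equiv} identifies certain local \emph{equivalences}, not local \emph{objects}. Note also that the detour through $\Sim$ buys nothing here: by Lemma \ref{k-inj-cosegal} the strict monoid $\iota(\C)$ is already $\kb(\I)$-injective (its structure maps are identities), so your argument reduces to the bare, unproved claim that a fibrant $\kb(\I)$-injective object is local.

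The paper closes the converse by a different, purely formal route that never mentions local objects. Factor $\sigma$ in $\com$ as a trivial cofibration $\sigma_1$ followed by a fibration $\sigma_2$. Since $\iota:\com\to\comsec$ is right Quillen (Theorem \ref{main-thm-2}), $\sigma_2$ is a fibration in $\comsec$; by the ``if'' part and the $3$-for-$2$ property, $\sigma_2$ is also a $\kb(\I)$-local equivalence, hence a trivial fibration in $\comsec$. But trivial fibrations in a left Bousfield localization coincide with those of the unlocalized structure, so $\sigma_2$ is an easy weak equivalence, and therefore so is $\sigma=\sigma_2\circ\sigma_1$. If you want to salvage your approach you would have to add a tractability hypothesis on $\M$ and invoke the HELP Lemma; as written, the proposal does not prove the lemma.
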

\begin{proof}
The if part is clear since a weak equivalence in $\com$ is an easy weak equivalence in $\coms$ and therefore it's also a weak equivalence in the Bousfield localization. But the weak equivalences in the Bousfield localization are precisely the $\kb(\I)$-local equivalences.\\

Let's now assume that $\sigma: \C \to \D$ becomes a $\kb(\I)$-local equivalence.\\

Use the axiom of factorization in the model category $\com$ to factor $\sigma$ as trivial cofibration followed by a fibration: 
$$\sigma= \C \xhookrightarrow[\sim]{\sigma_1} \Ea \xtwoheadrightarrow{\sigma_2} \D.$$  

Since we know that inclusion $\iota: \com \to \comse$ is a right Quillen functor when we pass to the left Bousfield localization $\comsec$, it follows that the map $\sigma_2: \Ea \to \D$ is a fibration in $\comsec$. Now  as $\sigma_1$ is a weak equivalence of usual commutative monoids, by the if part, it's also a $\kb(\I)$-local equivalence. \\

By the $3$-for-$2$ property of $\kb(\I)$-local equivalences applied to the equality $\sigma= \sigma_2 \circ \sigma_1$, we find that $\sigma_2$ is also a $\kb(\I)$-local equivalence. \\

In the end we see that $\sigma_2$ is simultaneously a fibration in the Bousfield localization $\comsec$ and a $\kb(\I)$-local equivalence, therefore it's a trivial fibration in $\comsec$. But a trivial fibration in this left Bousfield localization is the same as a trivial fibration in the original model structure. This means that $\sigma_2$ is usual trivial fibration of strict commutative monoids. 

Then $\sigma= \sigma_2 \circ \sigma$ is the composite of weak equivalence of usual commutative monoids and the lemma follows.
\end{proof}

\subsection{The main Theorem}
\begin{thm}\label{main-thm-quillen-equiv}
Let $\M$ be a symmetric monoidal model category that is combinatorial and left proper. Then the following hold. 
\begin{enumerate}
\item The left Quillen equivalence $\comse \to \comsep$ induced by the identity of $\coms$ descends to a left Quillen equivalence between the respective left Bousfield localizations with respect to $\kb(\I)$:
$$\comsec \to \comsepc.$$
\item If the transferred model structure on $\com$ exists, then the adjunction
$$ |-|^{\mathbf{c}}: \comsec \leftrightarrows \com: \iota,$$
is a Quillen equivalence. 
\item The diagram $\comsepc \xleftarrow{}\comsec \xrightarrow{|-|^{\mathbf{c}}} \com$ is a zigzag of Quillen equivalences. In particular we have a diagram of equivalences between the homotopy categories.
$$\Ho[\comsepc] \xleftarrow{\simeq} \Ho[\comsec] \xrightarrow{\simeq} \Ho[\com].$$
\end{enumerate}
\end{thm}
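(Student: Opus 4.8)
The plan is to deduce all three assertions by assembling results already in hand, feeding Proposition~\ref{prop-eta-kx-loc-equiv} and Lemma~\ref{lem-reflect-equiv} into the standard recognition criteria for Quillen equivalences. For Assertion~(1), first note that $\comse$ and $\comsep$ carry the same weak equivalences (Theorem~\ref{enlarging-msx}), hence the same homotopy function complexes, so a map is a $\kb(\I)$-local equivalence with respect to one exactly when it is with respect to the other (the remark preceding Theorem~\ref{main-thm-1}). Since left Bousfield localization leaves the cofibrations unchanged and the cofibrations of $\comse$ are already contained in those of $\comsep$ (as $\I_{\coms}\subseteq\I_{\coms}^{+}$), the localized categories $\comsec$ and $\comsepc$ share the same underlying category, the same weak equivalences, and an inclusion of cofibrations. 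Hence the identity is a left Quillen functor $\comsec\to\comsepc$; being the identity on the common underlying category between two model structures with identical weak equivalences, its total left derived functor is the identity of the common localization $\Ho[\comsec]=\Ho[\comsepc]$, so it is a left Quillen equivalence.

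For Assertion~(2) I would apply the dual form of the standard criterion (see \cite{Hov-model}): a Quillen adjunction $L\dashv R$ is a Quillen equivalence as soon as $R$ reflects weak equivalences between fibrant objects and, for every cofibrant object, the derived unit is a weak equivalence. Here $L=|-|^{\mathbf{c}}$ and $R=\iota$, and the pair is already a Quillen adjunction by Theorem~\ref{main-thm-2}(3). The reflection property is precisely Lemma~\ref{lem-reflect-equiv}, which says that a map between strict commutative monoids is a $\kb(\I)$-local equivalence if and only if it is a weak equivalence in $\com$; in particular $\iota$ reflects (and preserves) all weak equivalences. For the derived unit at a cofibrant $\F$, I would factor it as
\[
\F\xrightarrow{\eta}\iota(|\F|)\xrightarrow{\iota(j)}\iota(C),
\]
where $j\colon|\F|\to C$ is a fibrant replacement of $|\F|$ taken in $\com$. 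The first map is a $\kb(\I)$-local equivalence by Proposition~\ref{prop-eta-kx-loc-equiv}, and the second is $\iota$ applied to a weak equivalence of $\com$, hence again a $\kb(\I)$-local equivalence by the easy direction of Lemma~\ref{lem-reflect-equiv}; the composite is therefore a weak equivalence in $\comsec$, as required. Assertion~(2) follows.

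Assertion~(3) is then a formality: Assertion~(1) supplies the left Quillen equivalence $\comsec\to\comsepc$ and Assertion~(2) supplies the Quillen equivalence $|-|^{\mathbf{c}}:\comsec\leftrightarrows\com:\iota$, so the displayed diagram is a zigzag of Quillen equivalences. Passing to homotopy categories, each Quillen equivalence induces an adjoint equivalence, which yields the asserted equivalences $\Ho[\comsepc]\xleftarrow{\simeq}\Ho[\comsec]\xrightarrow{\simeq}\Ho[\com]$.

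The real difficulty lies upstream, in Proposition~\ref{prop-eta-kx-loc-equiv} and Lemma~\ref{lem-reflect-equiv} (which in turn rest on the delicate $2$-constant analysis of Proposition~\ref{pdef-deux-constant}); granting those, the theorem is mostly assembly. The one point demanding genuine care is the computation of the derived unit in Assertion~(2), where the fibrant replacement must be formed in $\com$ rather than in $\comsec$: it is exactly for this reason that Proposition~\ref{prop-eta-kx-loc-equiv} is stated for \emph{every} premonoid $\F$ and not merely the fibrant ones, and that the preservation half of Lemma~\ref{lem-reflect-equiv} is invoked.
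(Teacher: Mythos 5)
Your proof is correct and follows essentially the same route as the paper: both identify Assertion~(2) as the only nontrivial point and reduce it to the factorization of a map $\F\to\iota(\C)$ through the unit $\eta:\F\to\iota(|\F|)$, invoking Proposition~\ref{prop-eta-kx-loc-equiv} and Lemma~\ref{lem-reflect-equiv} in exactly the same way. The only (cosmetic) difference is that you enter through the derived-unit-plus-reflection criterion for Quillen equivalences, whereas the paper checks the adjunct-map formulation directly on a cofibrant $\F$ and fibrant $\C$; the underlying argument is identical.
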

\begin{proof}
We only need to prove the second assertion, namely that we have a Quillen equivalence
$$ |-|^{\mathbf{c}}: \comsec \leftrightarrows \com: \iota.$$

For this it suffices to show that if $\F \in \comsec$ is cofibrant and if $\C\in \com$ is fibrant, then a map $\sigma:\F \to \iota(\C)$ is a weak equivalence in $\comsec$ if and only if the adjunct map $\ol{\sigma}:|\F| \to \C$ is a weak equivalence in $\com$.\\

Since the inclusion $\iota: \com \to \comsec$ is fully faithful, we will identity $\C$ with $\iota(\C)$ and $|\F|$ with $\iota(|\F|)$. Let $\eta: \F \to |\F|$ be the unit of the adjunction. Then all three maps fit in a commutative triangle in $\comsec$:
\begin{equation}\label{triang}
\xy
(0,15)*+{\F}="W";
(0,0)*+{|\F| }="X";
(20,0)*+{\C}="Y";
{\ar@{->}^-{\ol{\sigma}}"X";"Y"};
{\ar@{->}^-{\eta}"W";"X"};
{\ar@{->}^-{\sigma}"W";"Y"};
\endxy
\end{equation}

Thanks to Proposition \ref{prop-eta-kx-loc-equiv} we know that $\eta: \F \to |\F|$ is always a $\kb(\I)$-local equivalence. Then by $3$-for-$2$ we see that $\sigma$ is a $\kb(\I)$-local equivalence if and only if $\ol{\sigma}$ is. Now thanks to Lemma \ref{lem-reflect-equiv} we know that $\ol{\sigma}$ is a $\kb(\I)$-local equivalence if and only if it's an equivalence in $\com$ and the theorem follows.
\end{proof}

\newpage
\appendix
\section*{Proof of Lemma \ref{lem-reduc-adj-nass}}
For the reader's convenience we include a short discussion about the proof of Lemma \ref{lem-reduc-adj-nass}:
\begin{lemsans}
Let $\M$ be a symmetric monoidal closed category that is also locally presentable. Then there is a left adjoint to the forgetful functor 
$$\Lax(\Phepiop,\M)_{NA} \to \Hom(\Phepiopl,\M).$$

The induced adjunction is monadic and $\Lax(\Phepiop,\M)_{NA}$ is also locally presentable.
\end{lemsans}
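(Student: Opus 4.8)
The plan is to present $\Lax(\Phepiop,\M)_{NA}$ as a $2$-categorical (inserter) limit of accessible functors between locally presentable categories, deduce local presentability from the closure of such categories under these limits, and then obtain the left adjoint and monadicity by the standard machinery used already in Proposition \ref{prop-reduction}.

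First I would fix the comparison functors into the diagram category $\Hom(\Phepiopl \times \Phepiopl, \M)$. Let $T$ be the reindexing functor $\F \mapsto [(\n,\m) \mapsto \F(\n+\m)]$ obtained by precomposing with the addition functor $+\colon \Phepiopl \times \Phepiopl \to \Phepiopl$; being a pullback functor it preserves all limits and colimits, and in particular is accessible. Let $S$ be the tensor-square functor $\F \mapsto [(\n,\m) \mapsto \F(\n) \otimes \F(\m)]$. Because $\M$ is symmetric monoidal closed, each $m \otimes -$ preserves colimits, and a routine diagonal-cofinality argument then shows that $S$ preserves filtered colimits, so $S$ is accessible. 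By the very definition of $\Lax(\Phepiop,\M)_{NA}$, a nonassociative laxity structure on $\F$ is exactly a natural transformation $\varphi \colon S(\F) \Rightarrow T(\F)$, the compatibility with the morphisms of $\Phepiop$ being precisely the naturality of $\varphi$; hence $\Lax(\Phepiop,\M)_{NA}$ is the inserter $\mathrm{Ins}(S,T)$, with the normalization conditions (the value $I$ at $\0$ and the attendant unit isomorphisms) imposed as an additional equifier-type condition. Since inserters and equifiers of accessible functors between locally presentable categories are again locally presentable (see \cite{Adamek-Rosicky-loc-pres}), this already yields that $\Lax(\Phepiop,\M)_{NA}$ is locally presentable.

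Next I would record the analogue of Proposition \ref{closure-limit-filt}: the forgetful functor $\Ub\colon \Lax(\Phepiop,\M)_{NA} \to \Hom(\Phepiopl,\M)$ creates all limits and all filtered colimits, which are therefore computed in the underlying diagram category. For limits this uses that $T$ preserves them and that the laxity map on a limit is induced by applying $S$ to the projections (so no preservation of limits by $S$ is needed); for filtered colimits it uses the accessibility of both $S$ and $T$. Since $\Ub$ is thus a limit-preserving, accessible functor between locally presentable categories, the adjoint functor theorem for locally presentable categories provides the desired left adjoint (one may also invoke Theorem \ref{adamek-refl} in the style of Proposition \ref{prop-reduction}). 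Finally, monadicity follows from Beck's theorem: $\Ub$ reflects isomorphisms (an inverse of an underlying map is automatically compatible with the laxity maps) and creates coequalizers of $\Ub$-split, hence of reflexive, pairs, so \cite{Adamek_Rosicky_Vitale} applies.

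The genuine content — and the only place where the hypotheses on $\M$ are really used — is the behaviour of the tensor-square functor $S$: its accessibility, and its preservation of reflexive coequalizers needed in the Beck step, both come down to the statement that $\otimes$ distributes over the relevant colimits, which is where symmetric monoidal closedness of $\M$ enters. Everything else is formal bookkeeping with inserters and $2$-limits of locally presentable categories. The one technical care point I would flag is the treatment of the normalization conditions at $\0$, which must be encoded as further limit-type conditions so as not to disturb local presentability; this is routine but should be spelled out in the Appendix.
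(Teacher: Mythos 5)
Your proof is correct, but it takes a genuinely different route from the paper's. The Appendix constructs the left adjoint \emph{explicitly}, by induction on the degree $\n$: it introduces a ``lax-latching object'' $L_\n^{\ox}\H$ as a colimit over a comma category $\partial(\Add \downarrow \n)$ built from the Grothendieck construction of the addition functor, defines $\Gamma(\F)(\n)$ as a pushout of $L_\n\F \to \F(\n)$ along the comparison map $L_\n\F \to L_\n^{\ox}[\Gamma(\F)^{\leq \n-\1}]$, and only afterwards deduces monadicity (Beck) and local presentability (the induced monad is finitary, then \cite[Remark 2.78]{Adamek-Rosicky-loc-pres}). You go the abstract route: identify $\Lax(\Phepiop,\M)_{NA}$ with the inserter $\mathrm{Ins}(S,T)$ of two accessible functors into $\Hom(\Phepiopl\times\Phepiopl,\M)$, get local presentability from the closure of locally presentable categories under inserters and equifiers of accessible functors, and then produce the left adjoint from the adjoint functor theorem. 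Both arguments hinge on the same essential input, namely that $\otimes$ distributes over the relevant colimits because $\M$ is closed; in your version this appears as the accessibility of the tensor-square functor $S$, in the paper's as the good behaviour of the lax-latching colimits under the inductive pushouts. The paper's construction buys an explicit degree-by-degree formula for the free nonassociative lax diagram, though nothing downstream exploits it (the properties of $\Gamma$ used later, e.g.\ in Proposition \ref{property-of-adj-m-to-coms}, all follow from uniqueness of adjoints); your version is shorter and more robust but purely existential. Two small points to tighten: under the paper's Convention that $\F(\0)=I$ strictly with the canonical unit isomorphisms, the normalization at $\0$ carries no extra data, so the inserter over $\Phepiopl\times\Phepiopl$ already \emph{is} the whole category and no equifier is needed; and in the Beck step only creation of coequalizers of $\Ub$-split pairs is required --- these are absolute coequalizers, preserved by any functor, so no preservation property of $S$ enters there, and the parenthetical ``hence of reflexive pairs'' is neither a valid implication nor one you need.
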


\begin{nota}
We will use the following notation.
\begin{enumerate}
\item Let $\Add: \Phepiop \times \Phepiop \to \Phepiop$ be the bifunctor $+$ in the symmetric monoidal category $(\Phepiopl, +, \0)$.
\item Let's regard $\Add$ as a morphism in $\Cat$, therefore as a functor
$$\Add: \Un \to \Cat,$$ 
with $\Add(-)= \Phepiop \times \Phepiop$ and $Add(+)=\Phepiop$.
\item Let $\int \Add  \to \Un$ be the functor obtained from the Grothendieck construction applied to $\Add$. The fiber of $\ap$ is $\Phepiop \times \Phepiop $ and the fiber of $\bp$ is $\Phepiop$.
\item For $\n \in \Phepiop$, we will identify $\n$ with the object $(+,\n)$ in the total category $\int \Add$.
\item Denote by $[(\int Add)\downarrow \n]$ the usual comma category of the total category $\int \Add$ in which we've identified $\n$ with $(+,\n)$.
\item For simplicity we will denote by $\partial(Add \downarrow \n)\subset [(\int Add)\downarrow \n]$ the full subcategory obtained by removing the objects $[-,(\0,\n)]$, $[-,(\n,\0)]$ and $(+,\n)$ from the comma category
\item Let $\Hom([\Phepiopl]^{\leq \n},\M)$ be the category of $\n$-truncated diagrams.
\item Similarly let $\Lax([\Phepiop]^{\leq \n},\M)_{NA}$ be the corresponding category of truncated nonassociative data. We also have a forgetful functor 
$$\Ub^{\leq \n}:\Lax([\Phepiop]^{\leq \n},\M)_{NA} \to  \Hom([\Phepiopl]^{\leq \n},\M).$$
\end{enumerate}
\end{nota}
To prove our lemma we will construct the left adjoint $\Gamma$ by induction on the degree
\begin{schol}
Let $\n >0$ be an object of $\Phepiop$. Then any $(\n-\1)$-truncated object $\H \in \Lax([\Phepiop]^{\leq (\n-\1)},\M)_{NA}$ gives rise to a functor
$$ \partial_{\n}\H: \partial(Add \downarrow \n) \to \M,$$
defined by the formulas below.
\begin{enumerate}
\item For $u:[\ap,(\p,\q)] \to (\bp,\n)$ in $\partial(Add \downarrow \n)$ we define $\partial_{\n}\H(u)$ as follows.
\begin{itemize}[label=$-$]
\item If $\p \neq \0$ and $\q \neq \0$ we set
$$\partial_{\n}\H(u):= \H(p) \otimes \H(q).$$
\item If $\p=\0$ and $\q \neq \0$ we set
$$\partial_{\n}\H(u):= I \otimes \H(\q) \quad (\cong \H(\q)).$$
\item Similarly if $\p\neq \0$ and $\q = \0$ we set
$$\partial_{\n}\H(u):= \H(\p) \otimes I \quad (\cong \H(\p)).$$
\end{itemize}
\item And given $u$ of the form $u: (+,\p) \to (+,\n)$, we simply take $\partial_{\n}\H(u)= \H(\p).$
\item To define the image by $\partial_{\n}\H$ of a morphism in $\partial(Add \downarrow \n)$, it suffices to define the image of each canonical map $\gamma:[\ap,(\p,\q)] \to [\bp,(\p + \q)].$ The formula for the other morphisms is obvious.\\
For such $\gamma$ we have three cases:
$$\partial_{\n}\H(\gamma): \H(\p) \ox \H(\q) \to \H(\p+ \q); $$
$$\partial_{\n}\H(\gamma): \H(\p) \ox I \xrightarrow{\cong} \H(\p); $$
$$\partial_{\n}\H(\gamma): I \ox \H(\q) \xrightarrow{\cong} \H(\q); $$
\end{enumerate}

\end{schol}
\begin{defsans}
Let $\n>\0$ be an object of $\Phepiop$ and let $\H \in \Lax([\Phepiop]^{\leq (\n-\1)},\M)_{NA}$ be an $(\n-\1)$-truncated nonassociative lax diagram.\\

Define the lax-latching  object $L_\n^{\ox}\H$ of $\H$ at $\n$ as the colimit of the diagram
$$\partial_\n\H: \partial(Add \downarrow \n) \to \M.$$
\end{defsans}

A key ingredient is the following observation. 
\begin{lem}
The lax-latching object determines a left adjoint to the truncation from $\n$ to $\n-\1$:
$$\Lax([\Phepiop]^{\leq (\n)},\M)_{NA} \to \Lax([\Phepiop]^{\leq (\n-\1)},\M)_{NA}.$$
\end{lem}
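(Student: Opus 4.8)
The plan is to exhibit the left adjoint explicitly as the ``free extension by one degree'': given an $(\n-\1)$-truncated nonassociative lax diagram $\H$, I would let $G(\H)$ be the $\n$-truncated diagram that agrees with $\H$ in all degrees $\leq \n-\1$ and whose value at the new top object is the lax-latching object,
$$G(\H)(\n) := L_\n^{\ox}\H = \colim\left(\partial_\n\H : \partial(Add \downarrow \n) \to \M\right).$$
The structure maps landing in $\n$ — the functoriality maps $\H(\p)\to G(\H)(\n)$ attached to the morphisms $\p \to \n$ of $\Phepiop$ and the laxity maps $\H(\p)\ox\H(\q) \to G(\H)(\n)$ attached to the decompositions of $\n$ — are declared to be the components of the universal cocone of this colimit. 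The first thing to check is that these canonical maps genuinely assemble into a nonassociative lax diagram at level $\n$: every coherence square demanded of $G(\H)$ (naturality of the functoriality maps and compatibility of the laxity maps with the structure maps of $\Phepiop$) corresponds to a morphism \emph{inside} $\partial(Add\downarrow \n)$, so it commutes automatically because the cocone is indexed by that entire category. This is exactly why the three self-referential objects $[\ap,(\0,\n)]$, $[\ap,(\n,\0)]$ and $(\bp,\n)$ were removed: their absence is what makes $\partial_\n\H$ depend only on the truncated datum $\H$, and never on the value at $\n$ we are trying to build.

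Next I would verify the adjunction $G \dashv \tau$, where $\tau$ denotes the truncation functor. A morphism $G(\H)\to\K$ of $\n$-truncated diagrams amounts to a pair: its restriction $\phi:\H \to \tau\K$ to degrees $\leq \n-\1$, together with a top component $\sigma_\n : L_\n^{\ox}\H \to \K(\n)$ compatible with every structure map touching $\n$. The key observation is that, once $\phi$ is fixed, the functoriality and laxity maps of $\K$ that land in $\K(\n)$ turn $\phi$ into a cocone $\partial_\n\H \Rightarrow \K(\n)$ constant at $\K(\n)$: on an object $[\ap,(\p,\q)]$ one takes $\H(\p)\ox\H(\q)\xrightarrow{\phi\ox\phi}\K(\p)\ox\K(\q)\to\K(\p+\q)\to\K(\n)$, and on an object $(\bp,\m)$ one takes $\H(\m)\xrightarrow{\phi}\K(\m)\to\K(\n)$, and this is a cocone precisely because $\K$ is itself a nonassociative lax diagram. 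By the universal property of the colimit, such cocones are in natural bijection with maps $\sigma_\n: L_\n^{\ox}\H \to \K(\n)$, and this bijection identifies the compatible extensions of $\phi$ with the morphisms $G(\H)\to\K$ over $\phi$. Assembling these bijections yields a natural isomorphism
$$\Hom_{\Lax([\Phepiop]^{\leq \n},\M)_{NA}}(G(\H),\K) \cong \Hom_{\Lax([\Phepiop]^{\leq (\n-\1)},\M)_{NA}}(\H,\tau\K),$$
which is the asserted adjunction.

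I expect the main obstacle to be the bookkeeping in the first step: verifying that $\partial(Add\downarrow\n)$ encodes \emph{exactly} the functoriality and laxity coherences required at level $\n$ — neither more nor fewer — so that ``cocone under $\partial_\n\H$ compatible with $\phi$'' becomes literally synonymous with ``valid extension of $\phi$ to level $\n$.'' The objects $[\ap,(\p,\q)]$ with a surjection $\p+\q \to \n$ contribute not only the primary laxity maps (when $\p+\q=\n$) but also their composites along structure maps, and one must see that the universal cocone records all of these simultaneously and consistently; the degenerate cases $\p=\0$ or $\q=\0$ are absorbed by the normality isomorphisms $I\ox\H(\q)\cong\H(\q)$. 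Once this identification is in place, naturality of the bijection in $\H$ and $\K$ is routine, following from functoriality of $\colim$ and of $\partial_\n(-)$. Finally, as the lemma is the inductive step, I would remark that iterating $G$ degree by degree and passing to the colimit over $\n$ produces the left adjoint to the full forgetful functor of Lemma \ref{lem-reduc-adj-nass}.
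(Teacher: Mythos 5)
Your proposal is correct and is precisely the argument the paper intends (its own proof is literally ``Left to the reader''): the left adjoint freely extends $\H$ by setting the new top value to $L_\n^{\ox}\H$, the universal cocone over $\partial(Add \downarrow \n)$ supplies exactly the structure and laxity maps landing in degree $\n$ together with their coherences, and the hom-set bijection follows from the universal property of the colimit. Your observation that removing $[\ap,(\0,\n)]$, $[\ap,(\n,\0)]$ and $(\bp,\n)$ is what makes $\partial_\n\H$ depend only on the truncated datum is exactly the right justification for why this works.
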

\begin{proof}
Left to the reader.
\end{proof}
\begin{rmk}\label{rmk-latch-to-laxlatch}
Let $L_\n\H$ be the usual latching object of the underlying diagram $\H$ regarded as an object of $\Hom([\Phepiop]^{\leq (\n-\1)},\M)$ (see \cite{Hov-model}). 
It's not hard to see that there is a canonical map:
$$\delta_\n: L_n\H \to L_\n^{\ox}\H.$$
\end{rmk}
\subsection*{Left adjoint by induction}
We construct a left adjoint $\Gamma:  \Hom(\Phepiopl,\M) \to \Lax(\Phepiop,\M)_{NA}$ inductively as well as the unit $$\eta:\F \to \Ub \Gamma(\F).$$ To simplify the notation we will drop the functor $\Ub$ in $\eta$\\

Let $\F$ be an object in $\Hom(\Phepiopl,\M)$. As mentioned before we also regard $\F$ as an object of $\Hom(\Phepiop,\M)$ satisfying $\F(\0)=I$ (always).
\paragraph{Initialization} 
Let's set $\Gamma(\F)(\1)= \F(\1)$ and  $\Gamma(\F)(\0)= I= \F(\0)$.\\ 

Let $\eta_\1: \F^{\leq \1} \to \Gamma(\F)^{\leq \1} $ be the natural transformation given by the identity. 
\paragraph{From $\n-\1$ to $\n$} Assume that $\Gamma(\F)^{\leq \n-\1}$ is constructed and that we have a unit map 
$$\eta_{\n-\1}: \F^{ \leq \n-\1} \to \Gamma(\F)^{\leq \n-\1} $$

This map $\eta_{\n-\1}$ induces a canonical map between the classical latching objects
$$L_\n \F \to L_\n[\Gamma(\F)^{\leq \n-\1}].$$
If we compose it with the map $\delta_\n$ of Remark \ref{rmk-latch-to-laxlatch} we get a map
$$\xi_\n:  L_\n \F \to L_\n^{\ox}[\Gamma(\F)^{\leq \n-\1}].$$

Define $\Gamma(\F)(\n)$ as the pushout-object obtained by forming the pushout of the canonical map $L_\n\F \to \F(\n)$ along the attaching map 
$\xi_\n:L_\n \F \to L_\n^{\ox}[\Gamma(\F)^{\leq \n-\1}]$:
\[
\xy
(0,18)*+{L_\n\F}="W";
(0,0)*+{\F(\n) }="X";
(30,0)*+{\Gamma(\F)(\n)}="Y";
(30,18)*+{L_\n^{\ox}[\Gamma(\F)^{\leq \n-\1}]}="E";
{\ar@{->}^-{}"X";"Y"};
{\ar@{->}^-{}"W";"X"};
{\ar@{->}^-{\xi_\n}"W";"E"};
{\ar@{->}^-{}"E";"Y"};
\endxy
\]

The new map $\F(\n) \to \Gamma(\F)(\n)$ extends $\eta_{\n-\1}$ to $\eta_n$. It takes a little effort to show that the object $\Gamma(\F)$ that is constructed inductively this way, satisfies the universal property of a left adjoint. Since every construction is clearly functorial, we have a functor $\Gamma$.\\

Now it is classical to show that the functor $\Ub$ creates (hence preserves) coequalizer of $\Ub$-split pair. $\Ub$ also creates limits and filtered colimits, and it clearly reflects isomorphisms. By Beck monadicity we see that we have a monadic adjunction.\\

The induced monad is clearly finitary ($\Ub$ and $\Gamma$ preserve filtered colimits). Then following \cite[Remark 2.78]{Adamek-Rosicky-loc-pres} we get that $\Lax(\Phepiop,\M)_{NA}$ is also locally presentable. This ends the proof of the lemma.  $\qed$

\bibliographystyle{plain}
\bibliography{Bibliography_LP_COSEG}
\end{document}